\documentclass[reqno]{amsart}

\usepackage[english]{babel}
\usepackage[utf8]{inputenc} 

\usepackage{amsfonts,amssymb,amsmath,amsthm}
\numberwithin{equation}{section}
\usepackage{dsfont}
\usepackage{multirow}
\usepackage{url}
\usepackage{xcolor} 
\usepackage{graphics}
\usepackage{booktabs}
\usepackage{epsfig}
\usepackage{epstopdf}
\usepackage{psfrag}                
\usepackage{mathrsfs}
\usepackage{mathtools}
\mathtoolsset{showonlyrefs}    
\usepackage{subfigure}
\usepackage{nicefrac}
\usepackage{bm}
\usepackage{diagbox}
\usepackage{caption}
\captionsetup[table]{position=bottom,width=0.9\textwidth}
\hyphenation{}

\usepackage{paralist}
\usepackage[shortlabels,inline]{enumitem}


\usepackage[pdftex,
	pdftitle={Multiple and weak Markov properties in Hilbert spaces},
	pdfauthor={K.~Kirchner and J.~Willems},
	bookmarksopen,
	colorlinks,
	linkcolor=black,
	urlcolor=black,
	citecolor=black
	]{hyperref}



\newcommand{\bbN}{\mathbb{N}}
\newcommand{\bbP}{\mathbb{P}}
\newcommand{\bbR}{\mathbb{R}}

\newcommand{\bbT}{\mathbb{T}}

\newcommand{\R}{\mathbb{R}}

\newcommand{\N}{\mathbb{N}}

\newcommand{\E}{\mathbb{E}}

\newcommand{\cA}{\mathcal{A}}
\newcommand{\cB}{\mathcal{B}}

\newcommand{\cD}{\mathcal{D}}

\newcommand{\cF}{\mathcal{F}}
\newcommand{\cG}{\mathcal{G}}
\newcommand{\cH}{\mathcal{H}}
\newcommand{\cI}{\mathcal{I}}
\newcommand{\cJ}{\mathcal{J}}
\newcommand{\cK}{\mathcal{K}}
\newcommand{\cL}{\mathcal{L}}

\newcommand{\cN}{\mathcal{N}}
\newcommand{\cO}{\mathcal{O}}
\newcommand{\cP}{\mathcal{P}}
\newcommand{\cQ}{\mathcal{Q}}

\newcommand{\cS}{\mathcal{S}}
\newcommand{\cT}{\mathcal{T}}

\newcommand{\cW}{\mathcal{W}}
\newcommand{\cX}{\mathcal{X}}


\newcommand{\norm}[2]{     \| #1       \|_{ #2 }}

\newcommand{\Norm}[2]{\left\| #1 \right\|_{ #2 }}
\newcommand{\scalar}[2]{     \langle #1       \rangle_{ #2 }}
\newcommand{\Scalar}[2]{\left\langle #1 \right\rangle_{ #2 }}



\newcommand{\tr}{\operatorname{tr}}
 
\newcommand{\rd}{\mathop{}\!\mathrm{d}}
\newcommand{\from}{\colon}

\newcommand{\clos}[1]{\overline{ #1 }} 
\newcommand{\id}{\operatorname{Id}}
\newcommand{\dom}[1]{\mathsf D(#1)}
\newcommand{\range}[1]{\mathsf R(#1)}

\newcommand{\dual}[1]{#1^*}

\newcommand{\ind}{\perp\!\!\!\perp}


\newcommand{\supp}{\operatorname{supp}}

\newcommand{\LO}{\mathscr L}

\newtheorem{lemma}{Lemma}[section]
\newtheorem{proposition}[lemma]{Proposition}
\newtheorem{theorem}[lemma]{Theorem}
\newtheorem{corollary}[lemma]{Corollary}

\theoremstyle{remark}
\newtheorem{remark}[lemma]{Remark}

\theoremstyle{definition}
\newtheorem{definition}[lemma]{Definition}
\newtheorem{assumption}[lemma]{Assumption}
\newtheorem{example}[lemma]{Example}


\begin{document}

\author{Kristin Kirchner \and Joshua Willems}

\address[Kristin Kirchner]{Delft Institute of Applied Mathematics\\
	Delft University of Technology\\
	P.O.~Box 5031 \\ 
	2600 GA Delft \\
	The Netherlands.}

\email{k.kirchner@tudelft.nl}

\address[Joshua Willems]{Delft Institute of Applied Mathematics\\
	Delft University of Technology\\
	P.O.~Box 5031 \\ 
	2600 GA Delft \\
	The Netherlands.}

\email{j.willems@tudelft.nl}


\title[Multiple and weak Markov properties in Hilbert spaces]{%
	Multiple and weak Markov properties in Hilbert spaces with applications to fractional stochastic evolution equations}


\keywords{Higher-order Markov property,
	infinite-dimensional fractional Wiener process, 
	Mat\'ern covariance, 
	spatiotemporal Gaussian processes. 
}

\subjclass[2020]{Primary: 
	60J25, 
	60G15; 
	secondary: 
	60G22, 
	60H15. 
}  

\date{}


\begin{abstract}
	We define various higher-order Markov properties 
	for stochastic processes
	$(X(t))_{t\in \mathbb{T}}$,
	indexed by an interval $\mathbb{T} \subseteq \mathbb{R}$ and
	taking values in a real and 
	separable Hilbert space~$U$.
	We furthermore investigate 
	the relations between them. 
	In particular, for solutions to the 
	stochastic evolution equation $\mathcal{L} X = \dot W^Q\!$, where
	$\mathcal{L}$ is a linear operator acting on functions mapping
	from $\mathbb{T}$ to $U$ and 
	$(\dot W^Q(t))_{t\in\mathbb{T}}$ is the formal derivative of a $U$-valued (cylindrical) $Q$-Wiener process,
	we prove necessary and sufficient conditions for the weakest Markov property
	via locality of the precision operator $\mathcal{L}^*\! \mathcal{L}$.

	As an application, we consider the space--time 
	fractional parabolic operator $\mathcal{L} = {(\partial_t + A)^\gamma}$
	of order ${\gamma \in (1/2,\infty)}$, where
	$-A$ is a linear operator generating a $C_0$-semigroup on~$U$.
	We prove that the resulting solution process satisfies an $N$th order Markov property
	if $\gamma = N \in \mathbb{N}$ and show that a necessary condition for the weakest Markov property
	is generally not satisfied if $\gamma \notin \mathbb{N}$.
	The relevance of this class of processes
	is twofold: Firstly, it can be seen as a 
	spatiotemporal generalization of 
	Whittle--Mat\'ern Gaussian random fields 
	if $U = L^2(\mathcal{D})$ for a spatial domain $\mathcal{D}\subseteq\mathbb{R}^d\!$. 
	Secondly, we show that
	a $U$-valued analog to the fractional Brownian motion 
	with Hurst parameter $H \in (0,1)$ can be obtained 
	as the limiting case of
	$\mathcal{L} = (\partial_t + \varepsilon \, \mathrm{Id}_U)^{H + \frac{1}{2}}$
	for $\varepsilon \downarrow 0$.
\end{abstract}

\maketitle

\section{Introduction}
\label{section:intro}
\subsection{Background and motivation}
Gaussian Markov random fields play an important role
for various applications, such as
the analysis of time series or longitudinal data, image processing and spatial statistics, 
see e.g.~\cite[Section~1.3]{Rue2005}.
The latter
focuses on the statistical modeling
of spatial or spatiotemporal
dependence in data
collected from phenomena encountered in disciplines 
such as 
climatology~\cite{Alexeeff2018}, 
epidemiology~\cite{Lawson2014} and 
neuroimaging~\cite{MejiaEtAl2020}. 
The popularity of Gaussian Markov random fields among
the larger class of Gaussian random fields is a consequence
of their additional conditional independence properties,
which entail a sparse precision structure and 
facilitate efficient computational methods for statistical inference.
In particular, hierarchical models based on Gaussian Markov random fields allow for efficient
Bayesian inference using Markov chain Monte Carlo methods, see e.g.~\cite[Section~4.1]{Rue2005}.

Since a Gaussian process is fully characterized 
by its second-order structure, i.e., 
the mean and covariance function, 
a natural way to specify its distribution is 
to choose a suitable second-order structure.  
Alternatively, the dynamics of Gaussian random fields defined on 
a Euclidean domain $\cD \subseteq \R^d$ can be specified
by means of stochastic partial differential equations 
(SPDEs), such as the white noise $(\cW(x))_{x\in\cD}$ driven equation 
\begin{equation}\label{eq:abstract-spatial-SPDE}
	L X(x) = \cW(x), \quad x \in \cD. 
\end{equation}
Here, $L$ is 
a linear operator 
acting on real-valued functions defined on~$\cD$.
A spatial
Gaussian random field $(X(x))_{x \in \cD}$ is said to have the Markov property
if the subcollections $(X(x))_{x \in \cD_{1}}$
and $(X(x))_{x \in \cD_{2}}$
corresponding to
pairs of disjoint subdomains $\cD_1, \cD_2 \subseteq \cD$
are independent
conditional on $(X(x))_{x \in \cD'}$ for 
some non-trivial `splitting' set $\cD' \subseteq \cD$ separating the two.
The precise specification of these sets, 
which respectively carry  
the intuitive interpretations
of \emph{past}, \emph{future} and \emph{present}, leads to 
various definitions of the Markov property.
According to the theory of Rozanov~\cite{Rozanov1982},    
a real-valued Gaussian random field 
satisfying~\eqref{eq:abstract-spatial-SPDE}
has such a Markov property
if and only if its
precision operator $L^*\! L$ is local, where $L^*$ denotes the adjoint of $L$.

An important example 
in spatial statistics is the choice of a 
fractional-order differential operator
$L := \tau (\kappa^2 - \Delta)^\beta$ in~\eqref{eq:abstract-spatial-SPDE},
where $\Delta$ is the Laplacian, $\cW$ is Gaussian white noise
and $\tau, \kappa, \beta \in (0,\infty)$.
Whittle~\cite{Whittle1963} observed that the covariance function of the  
stationary solution $(X(x))_{x\in \cD}$ to \eqref{eq:abstract-spatial-SPDE}  
with $\cD = \R^d$ 
then belongs to the widely used
\emph{Mat\'ern 
	covariance class}~\cite{Matern1960}. 
This observation motivated the \emph{SPDE approach}
for spatial statistical modeling proposed by
Lindgren, Rue and Lindstr\"om~\cite{LindgrenRueLindstroem2011}. Here, one considers~\eqref{eq:abstract-spatial-SPDE} 
on a bounded Euclidean
domain $\cD\subsetneq \R^d$, augmented with boundary conditions,
and approximates the resulting \emph{Whittle--Mat\'ern fields} by means of
efficient numerical methods available for (S)PDEs.
Owing to its ease of generalization and 
its computational efficiency as 
compared to covariance-based techniques, 
this approach
has  gained widespread popularity, 
see e.g.~\cite{BolinKirchnerJCGS2020,BKKBIT2018,BKKIMA2020,HKS2020,LindgrenBolinRue2022,SanzAlonsoYang2022}.
Since  in this case
the precision operator is given by 
$L^*\! L = \tau^2(\kappa^2 - \Delta)^{2\beta}\!$, 
we find that Whittle--Mat\'ern fields are Gaussian \emph{Markov} random fields 
in the sense of Rozanov~\cite{Rozanov1982}
precisely when $2\beta \in \N$.

Recently, extensions of the SPDE approach 
incorporating time dependence 
have been discussed. 
A class of space--time equations 
which has been proposed in this context is
\begin{equation}\label{eq:bla}
	(\partial_t + L)^\gamma X(t,x) = \dot \cW^Q(t,x),
	\quad 
	(t,x) \in \bbT \times \cD,
\end{equation}
where $\bbT \subseteq \bbR$ represents a time interval and
$\dot \cW^Q$ is spatiotemporal Gaussian noise, 
which is 
spatially colored by an operator $Q$, see~\cite{KW2022, Bakka2020}.
In particular,
it has been shown in~\cite{KW2022} that
equation \eqref{eq:bla} extends the Mat\'ern model in terms of spatial marginal covariance,
and that the interplay of its parameters governs
smoothness in space and time as well as
the degree of separability. 

Spatiotemporal random fields 
can be viewed as 
$U$-valued stochastic processes by 
letting a Hilbert space $U$ encode the spatial variable,
so that~\eqref{eq:bla} corresponds to
a stochastic fractional evolution equation 
of the form 
\begin{equation}\label{eq:the-SPDE-intro}
	(\partial_t + A)^\gamma X(t) = \dot W^Q(t), \quad t \in \bbT.
\end{equation}
The (temporal) Markov property of solutions to~\eqref{eq:bla} is then 
equivalent to that of the $U$-valued solution process $(X(t))_{t\in\bbT}$, 
where the Markov behavior is considered 
with respect to the index set~$\bbT$. 
Moreover, viewing~\eqref{eq:the-SPDE-intro} as a special case
of
\begin{equation}\label{eq:abstract-spatiotemporal-SPDE}
	\cL X(t) = \dot W^Q(t), 
	\quad 
	t \in \bbT,
\end{equation}
where $\cL$ is now a linear operator acting on functions from $\bbT$ to $U$,
the theory of Rozanov~\cite{Rozanov1982} suggests that 
locality of the precision operator $\cL^*\! \cL$,
also acting on functions $f \from \bbT \to U$, can be used to
characterize
temporal Markov behavior of the solution~$X$.

\subsection{Contributions}
In this work we define 
\emph{simple}, \emph{multiple} (\emph{$N$-ple} for $N \in \N$) and \emph{weak} Markov properties 
for stochastic processes which take values in a Hilbert space~$U$. 
These definitions generalize   
those appearing for instance in~\cite{HidaHitsuda1993, RevuzYor1999, Rozanov1982} 
for real-valued processes to infinite dimensions,  
see Definitions~\ref{def:simple-markov}, \ref{def:Nple-markov-Doob} and~\ref{def:weak-markov},
respectively.
Besides gathering them in once place, we establish their interrelations,
see Proposition~\ref{prop:Markov-relations} and Remark~\ref{rem:Markov-relations-weak}.
The main results are Theorems~\ref{thm:weakMarkov-Gaussian-necessity} and~\ref{thm:weakMarkov-Gaussian-sufficiency},
which give necessary and sufficient conditions, 
in terms of the precision operator $\cL^*\! \cL$, for the weakest notion of Markovianity for a $U$-valued
Gaussian process defined via~\eqref{eq:abstract-spatiotemporal-SPDE}.
These results are proven by a non-trivial extension of the theory by Rozanov~\cite[Chapters~2 and~3]{Rozanov1982} 
from the real-valued to the $U$-valued setting.

In order to consider the more concrete class of processes defined via~\eqref{eq:the-SPDE-intro},
we construct
a stochastic integral for deterministic operator-valued integrands defined on the
whole of $\bbR$ with respect to a two-sided (cylindrical)
$Q$-Wiener process $(W^Q(t))_{t\in\bbR}$, see Subsection~\ref{subsec:prelims:stochastic-int-twosided-Wiener}.
We employ this stochastic integral to
define the mild solution process $Z_\gamma = (Z_\gamma(t))_{t\in\R}$ to~\eqref{eq:the-SPDE-intro}
on $\bbT = \bbR$, see~Definition~\ref{def:mildsol}.
Our rigorous definition of the
fractional space--time operator $(\partial_t + A)^\gamma$ for $\gamma \in \R$,
see Definition~\ref{def:Weyl-fractional-parabolic-integral},
extends the Weyl fractional calculus in the sense that one recovers the Weyl fractional derivatives and integrals
defined in~\cite[Section~2.3]{KST2006} upon specializing to $U = \R$ and $A = 0$.

We show that the mild solution $Z_\gamma$ to~\eqref{eq:the-SPDE-intro}
satisfies the $N$-ple Markov property if $\gamma = N \in \N$, see~Theorem~\ref{thm:Nple-Markov-ZN}.
Conversely, we use Theorem~\ref{thm:weakMarkov-Gaussian-necessity} to show that
$Z_\gamma$ is in general not weakly Markov for $\gamma\not\in\N$. 
This complements~\cite[Theorem~2.7]{GoldysatZabczyk2016}, where the authors show that any 
time-homogeneous $U$-valued Gaussian simple Markov process is the solution to a first-order stochastic evolution equation.

Finally, we discuss another interesting aspect of
the SPDE~\eqref{eq:the-SPDE-intro}: A fractional $Q$-Wiener process $(W_H^Q(t))_{t\in\bbR}$ with Hurst parameter $H \in (0,1)$,
as defined for instance in~\cite{Duncan2002}, can be obtained
as a limiting case of~\eqref{eq:the-SPDE-intro}
with $\gamma = H + \frac{1}{2}$ and $A = \varepsilon \id_U$ as $\varepsilon \downarrow 0$, see Proposition~\ref{prop:fQWP-limit-of-Zgamma}.
The proof is based on a Mandelbrot--Van Ness~\cite{MandelbrotVanNess1968} type
integral representation of $W^Q_H$, again using the two-sided stochastic integral from Subsection~\ref{subsec:prelims:stochastic-int-twosided-Wiener}, 
see Proposition~\ref{prop:fQWP}.
The case $H = \frac{1}{2}$ corresponds to a (non-fractional)
$Q$-Wiener process and is thus Markov. Conversely, although
the results of Theorems~\ref{thm:weakMarkov-Gaussian-necessity} and~\ref{thm:weakMarkov-Gaussian-sufficiency} do not
apply directly, the above observation
provides evidence that $W^Q_H$ does not satisfy a weak Markov property 
for $H \in (0,1) \setminus \{\frac{1}{2}\}$.

\subsection{Outline}
In Section~\ref{sec:prelims} we begin by establishing the necessary notation, see Subsection~\ref{subsec:prelims:Notation},
followed by the construction of the stochastic integral with respect to a two-sided
(cylindrical)
$Q$-Wiener process in Subsection~\ref{subsec:prelims:stochastic-int-twosided-Wiener}.
Section~\ref{sec:markov-properties} is devoted to
defining, relating and 
(for solutions to~\eqref{eq:abstract-spatiotemporal-SPDE}) 
characterizing various notions of Markov behavior for $U$-valued stochastic processes.
The goal of Section~\ref{sec:frac-stoch-ACP-R} is to define and analyze
the
mild solution to~\eqref{eq:the-SPDE-intro} on $\bbT = \R$.
To this end, we first
describe the setting and define $(\partial_t + A)^\gamma$ with $\gamma \in \R$
in Subsections~\ref{subsec:fractional-int-diff:setting} and~\ref{subsec:fractional-int-diff}, respectively,
and subsequently investigate for which values of $\gamma$ the process exhibits Markov behavior in Subsection~\ref{subsec:fractional-int-diff:Markov}.
In Section~\ref{sec:frac-QWiener} we recall
the definition from~\cite{Duncan2002} of a $Q$-fractional Wiener process
and prove a Mandelbrot--Van Ness type integral representation, which allows us to exhibit it as a limiting case of~\eqref{eq:the-SPDE-intro}.

This article is supplemented by three appendices: 
Appendix~\ref{app:mixed-norm-bochner} collects some definitions and facts about 
spaces of functions with values in a Hilbert or Banach space.
It is followed by
Appendix~\ref{app:aux}, which
contains auxiliary results
relating to specific results from the main text
whose statements and proofs were postponed for readability;
subjects include
conditional independence, filtrations indexed by $\bbR$
and the mean-square differentiability of stochastic convolutions. 
Appendix~\ref{app:frac-powers} 
is a short overview of results
regarding fractional powers of linear operators and 
their relation to the fractional parabolic operator $(\partial_t + A)^\gamma$.

\section{Preliminaries}
\label{sec:prelims}
\subsection{Notation} \label{subsec:prelims:Notation}

The sets $\N := \{1,2,3,\dots\}$ and $\N_0 := \N \cup \{0\}$ 
denote the positive and non-negative integers, 
respectively.
We write $s\wedge t$ (or $s \vee t$) for the 
minimum (or maximum)  
of two real numbers $s,t\in\R$. 
Empty sums and products are assigned the values zero and one, respectively.
Given two arbitrary sets $D$ and $E$, 
the preimage of $E_0 \subseteq E$ under the function $f \from D \to E$ is defined by
${f^{-1}[E_0]  = \{ f \in E_0 \} = \{x \in D: f(x)\in E_0\}}$.
The identity on $D$ and the indicator function of $D_0 \subseteq D$
are respectively denoted by $\id_D \from D \to D$ and
$\mathbf 1_{D_0} \from D \to \{0,1\}$.

Let
$(U, \scalar{\, \cdot \, , \, \cdot \,}{U})$ and 
$(\widetilde U, \scalar{\, \cdot \, , \, \cdot \,}{\widetilde U})$ denote Hilbert spaces
with induced norms ${\norm{\cdot}{U}}$ and $\norm{\cdot}{\widetilde U}$.
Banach spaces are usually denoted by
$(E, \norm{\cdot}{E})$ and $(F, \norm{\cdot}{F})$. These
spaces are all assumed to be real and separable.

We write ${\mathscr L(E;F)}$ 
for the Banach space of bounded linear operators equipped with the usual operator norm.
The adjoint operator of $T\in \mathscr L(E;F)$ 
is denoted by ${\dual T \in \mathscr L(\dual F; \dual E)}$, where $E^*, F^*$ denote the dual spaces; 
if $T \in \LO({U}; \widetilde U)$, then
we can identify $\dual U = U$ 
and $\dual {\widetilde{U}} = \widetilde{U}$ via the Riesz 
maps, so that  
$\dual T\in \mathscr L(\widetilde U; {U})$.
We 
abbreviate 
$\mathscr L(E) := \mathscr L(E;E)$, 
and similarly for the other spaces of linear operators. 

We write $T \in \mathscr L^+(U)$
to indicate that $T \in \LO(U)$ is self-adjoint 
positive definite, meaning that $\dual T = T$ and
there exists $\theta\in(0,\infty)$ such that 
$\scalar{Tx,x}{U} \geq \theta \|x\|_U^2$ 
for all $x\in U$.
Let
$\mathscr L_2(U; \widetilde U) \subseteq \mathscr L(U; \widetilde U)$
denote the Hilbert space of all Hilbert--Schmidt operators
equipped with
the inner product 
$\langle T,S \rangle_{\mathscr L_2(U; \widetilde U)} := \sum_{j \in \N} \langle Te_j, Se_j \rangle_{\widetilde U}$, 
where
$(e_j)_{j\in\N}$ is any orthonormal basis for~$U$.  
The trace of $T \in \LO^+(U)$ is defined by
$
\tr T := \sum_{j \in \N} \scalar{Te_j, e_j}{U}.
$
We say that $T \in \LO(U)$ 
belongs to the Banach space $\mathscr L_1(U)$ of trace class operators 
if $\norm{T}{\LO_1(U)} := \tr |T| < \infty$ for $|T| := (\dual T T)^{\frac{1}{2}}$,
using the operator square root 
(see e.g.~\cite[Proposition~8.27]{Neerven2022})  
and set $\LO_1^+(U) := \LO^+(U) \cap \LO_1(U)$.

A linear operator $A$ on $E$ with domain $\mathsf D(A)$ 
is denoted by ${A\from \mathsf D(A) \subseteq E \to E}$;
for its range we write~$\mathsf R(A)$. 
We call $A$ closed if its graph 
$\mathsf G(A):= {\{(x,Ax): x \in \mathsf D(A)\}}$ 
is closed with respect to the norm 
$\norm{(x,Ax)}{\mathsf G(A)} = \norm{x}{\dom{A}} = \norm{x}{E} + \norm{Ax}{E}$, 
and densely defined if $\mathsf D(A)$ is dense in $E$. 
If the closure $\clos{\mathsf G(A)}$ is the graph of a linear operator $\clos A$, 
then we call $\clos A$ the closure of $A$.
We write $A \subseteq \widetilde A$ if $\mathsf G(A) \subseteq \mathsf G(\widetilde A)$.

Throughout this work, we assume that a complete probability space
$(\Omega,\cF,\bbP)$ is given,
meaning that $\cF$ contains the collection $\cN_{\bbP}$ of $\bbP$-null sets.
We abbreviate the phrase ``$\bbP$-almost surely'' by ``$\bbP$-a.s.''
In what follows, we call
a function $Z\from \Omega \to E$
an $E$-valued random variable if it is strongly $\bbP$-measurable, see 
Subsection~\ref{subsec:app-func-spaces:measurable-cont} in Appendix~\ref{app:mixed-norm-bochner}.
We write $Z \sim N(m, Q)$ if $Z$ is a $U$-valued Gaussian random variable
with mean $m \in U$ and covariance operator $Q \in \mathscr L^+_1(U)$;
its existence is guaranteed by~\cite[Theorem~2.3.1]{Bogachev1998}.
Two stochastic processes $(X(t))_{t \in \bbT}$ and
$(\widetilde X(t))_{t \in \bbT}$ are said to be modifications
of each other if $\bbP(X(t) = \widetilde X(t)) = 1$ for all $t \in \bbT$,
where $\bbT = [0,\infty)$, $\bbT = [0,T]$ for some $T \in (0,\infty)$ or $\bbT = \bbR$.

Let $\cG_1,\cH,\cG_2\subseteq \cF$ be sub-$\sigma$-algebras of $\cF$. 
The join of two $\sigma$-algebras is denoted by $\cG_1 \vee \cG_2 := \sigma(\cG_1 \cup \cG_2)$.
We write $\cG_1 \ind \cG_2$ to indicate that $\cG_1$ and $\cG_2$ are independent.
The expression $\E[Z \mid \cH]$ denotes the conditional expectation of a random variable $Z$ given $\cH$,
and the conditional probability of $A \in \cF$ given $\cH$ is defined by $\bbP(A \mid \cH) := \E[1_{A} \mid \cH]$, $\bbP$-a.s.
The notation $\cG_1 \ind_{\cH} \cG_2$ indicates that
$\cG_1$ and $\cG_2$ are conditionally independent given $\cH$, i.e.,
\begin{equation}
	\forall 
	G_1 \in \cG_1, \, G_2 \in \cG_2 : \quad  
	\bbP(G_1 \cap G_2 \mid \cH) 
	=
	\bbP(G_1 \mid \cH)\bbP(G_2 \mid \cH), \quad \bbP\text{-a.s.},
\end{equation}
When conditioning on
the natural $\sigma$-algebra $\sigma(Y) = \{ Y^{-1}[B] : B \in \cB(E)\}$ generated
by an $E$-valued random variable $Y$, 
we may simply write $Y$ instead of $\sigma(Y)$; e.g., $\E[Z \mid Y]$ or $\cG_1 \ind_{Y} \cG_2$.

\subsection{Stochastic integration with respect to a two-sided Wiener process}
\label{subsec:prelims:stochastic-int-twosided-Wiener}

Let $(W_{1}^Q(t))_{t\ge0}$ and $(W_{2}^Q(t))_{t\ge0}$
be independent $U$-valued standard $Q$-Wiener processes for a given $Q \in \LO_1^+(U)$,
see for instance~\cite[Section~2.1]{LiuRockner2015}, and define
\[
W^Q(t) := \begin{cases}
	W_1^Q(t), &\quad t \in [0,\infty); \\
	W_2^Q(-t), &\quad t \in (-\infty, 0).
\end{cases}
\]
Then the \emph{two-sided $Q$-Wiener process} $W^Q := (W^Q(t))_{t\in\R}$ satisfies the following:
\begin{enumerate}[label=(WP\arabic{*})]
	\item \label{item:WP1}
	$W^Q(t)$ has mean zero and
	$W^Q(t)-W^Q(s) \sim N(0, (t-s)Q)$ for $t \ge s$;
	\item $W^Q$ has continuous sample paths;
	\item  \label{item:WP3} 
	$W^Q(t_4) - W^Q(t_3) \ind W^Q(t_2) - W^Q(t_1)$ for
	$t_1 < t_2 \le t_3 < t_4$.
\end{enumerate}
One can 
define a stochastic integral with respect to such a process
using a construction analogous to the one-sided case, as presented for instance in~\cite[Section~2.3]{LiuRockner2015}.
Restricting ourselves to deterministic integrands $\Phi \from \R \to \LO(U; \widetilde U)$, 
this procedure yields a square-integrable stochastic integral
$\int_{\bbR} \Phi(t) \rd W^Q(t) \in L^2(\Omega; \widetilde U)$
which exists if and only if
$\Phi(\,\cdot\,)Q^\frac{1}{2} \in L^2(\bbR; \mathscr L_2(U; \widetilde U))$; 
see Subsection~\ref{subsec:app-func-spaces:Bochner-Sobolev} in Appendix~\ref{app:mixed-norm-bochner} 
for the definitions of these (Bochner) spaces.
In this case, it satisfies the following It\^o isometry:
\begin{equation}\label{eq:ito-R}
	\Norm{\int_{\bbR} \Phi(t) \rd W^Q(t)}{L^2(\Omega; \widetilde U)}^2 
	=
	\int_{\bbR} \norm{\Phi(t)Q^\frac{1}{2}}{\mathscr L_2(U; \widetilde U)}^2 \rd t.
\end{equation}
As in the one-sided case, we can extend the definition of the stochastic integral
to allow for an operator $Q \in \LO^+(U) \setminus \LO_1^+(U)$ 
which is not trace-class.
To this end, one defines a \emph{cylindrical $Q$-Wiener process}
$(W^Q(t))_{t\in\bbR}$
on $U$, which
can be identified with a $\widetilde Q$-Wiener process on 
a larger Hilbert space $\mathscr U \supseteq U$ 
with Hilbert--Schmidt embedding ${\iota \in \mathscr L_2(U; \mathscr U)}$,
where $\widetilde Q = \iota Q \dual\iota \in \mathscr L_1^+(\mathscr U)$.
We can then define the stochastic integral with respect to the associated
cylindrical $Q$-Wiener process, which still satisfies~\eqref{eq:ito-R}.
Such an embedding $\iota \in \mathscr L_2(U; \mathscr U)$ 
always exists, and the resulting integral does not depend
on $\iota$, see~\cite[Section~2.5]{LiuRockner2015}.

Now we turn to the matter of $\R$-indexed filtrations 
on $(\Omega, \cF, \bbP)$ associated
to $( W^Q(t) )_{t\in\R}$.
First we recall the situation in
the one-sided case. Here, the
integral process
$\bigl( \int_0^t \Phi(r) \rd W_1^Q(r) \bigr)_{t\geq 0} 
:= 
\bigl( \int_0^\infty \mathbf 1_{[0,t]}(r) \Phi(r) \rd W_1^Q(r) \bigr)_{t\geq 0}$
is adapted to the natural filtration
\[
\cF_t^{W_1^Q} 
:= 
\sigma\bigl( W_1^Q(s) : 0 \leq s \leq t \bigr) 
\vee \sigma(\cN_{\bbP}), 
\quad t \in [0,\infty),
\]
whenever $\Phi(\,\cdot\,)Q^\frac{1}{2} \in L^2(0,t; \mathscr L_2(U; \widetilde U))$ 
for all $t \in [0,\infty)$,
which is immediate from the definition of the stochastic integral.
Since property~\ref{item:WP3} for $(W_1^Q(t))_{t\geq 0}$ 
implies that $W_1^Q(t') - W_1^Q(s') \ind \cF_s^{W_1^Q}$ 
for all $0 \leq s \leq s' < t' \le t$, 
the stochastic integral $\int_s^t \Phi(r)\rd W_1^Q(r)$ 
is also independent of $\cF_s^{W_1^Q}$\!.
The combination of the previous two facts implies that
$\bigl( \int_0^t \Phi(r)\rd W_1^Q(r) \bigr)_{t\geq 0}$ 
is a martingale with respect to $\bigl(\cF^{W_1^Q}_t\bigr)_{t\geq 0}$.

In the two-sided case, we instead work with the 
\emph{(completed) filtration $(\cF_t^{\delta W^Q})_{t\in\R}$ generated by the increments of $W^Q$},
defined by
\begin{equation}\label{eq:increment-filtration}
	\cF_t^{\delta W^Q} 
	:=
	\sigma (W^Q(u) - W^Q(s) :  s < u \le t ) \vee \sigma(\cN_{\bbP}),
	\quad
	t \in \R.
\end{equation}
Note that we have $\cF^{\delta W^Q}_t \subseteq \cF^{W^Q}_t$
for all $t \in \bbR$ and $\cF^{W^Q}_t = \cF^{\delta W^Q}_t$
for $t \in [0,\infty)$, where
$\cF^{W^Q}_t$ is generated
by $(W^Q(s))_{s\in(-\infty, t]}$ for each $t \in \bbR$.
We point out that $(\cF_t^{\delta W^Q})_{t\in\R}$ is normal:
It is complete by definition and
right-continuous by Proposition~\ref{prop:filtr-rightcont}
in Appendix~\ref{app:aux}.
By property~\ref{item:WP3},
our two-sided Wiener process $(W^Q(t))_{t\in \R}$ now satisfies
\begin{equation}\label{item:WP3prime}
	\tag{WP3$'$}
	\forall s \le s' < t : \quad  
	W^Q(t) - W^Q(s') \ind \cF_s^{\delta W^Q}\!,
\end{equation}
so that by an argument, 
analogous to the one-sided case, 
$\bigl( \int_{-\infty}^t \Phi(r) \rd W^Q(r) \bigr)_{t\in\bbR}$
is a martingale with respect to $(\cF_t^{\delta W^Q})_{t\in\R}$
for every $\Phi(\,\cdot\,)Q^\frac{1}{2} \in L^2(\bbR; \mathscr L_2(U, \widetilde U))$.
Unlike $(W_1^Q(t))_{t\geq 0}$, however, 
the process $(W^Q(t))_{t\in\R}$ itself will not be a martingale
with respect to any filtration, see  Proposition~\ref{prop:WP-R-not-martingale} in Appendix~\ref{app:aux}.
We refer the reader to~\cite{BOGP2010, BOGP2014} for more details on the subject of real-valued
martingale type processes indexed 
by $\bbR$ and stochastic integration
with respect to such processes.

We finally record a sufficient condition for 
changing the order of integration between a deterministic Lebesgue integral
over a $\sigma$-finite measure space and a stochastic integral over $\R$ with respect to $(W^Q(t))_{t\in \R}$.
See~\cite{vNV2006} for a version of this result in the one-sided case with stochastic integrands.
For deterministic integrands, the two-sided generalization admits the following short proof.

\begin{theorem}[Stochastic Fubini theorem]\label{thm:stoch-fub}
	Let $(S, \mathscr A, \mu)$ be a $\sigma$-finite measure space
	and suppose $\Psi \from S \times \R \to \mathscr L(U, \widetilde U)$ 
	satisfies $\Psi Q^{\frac{1}{2}} \in L^{1,2}(S \times \R; \LO_2(U;\widetilde{U}))$
	(see Subsection~\ref{subsec:app-func-spaces:Bochner-Sobolev} of Appendix~\ref{app:mixed-norm-bochner}).
	Then
	$\Psi Q^\frac{1}{2} \in L^{2,1}(\R \times S; \LO_2(U;\widetilde{U}))$
	and
	\begin{equation}\label{eq:interchange-det-stoch-int}
		\int_{S} \int_\R \Psi(s,r) \rd W^Q(r) \rd \mu(s)
		=
		\int_\R \int_{S} \Psi(s,r) \rd \mu(s) \rd W^Q(r),
		\quad 
		\bbP\text{-a.s.}
	\end{equation}
\end{theorem}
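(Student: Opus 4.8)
The plan is to prove the interchange formula~\eqref{eq:interchange-det-stoch-int} by a standard approximation-plus-density argument, reducing to the case of simple integrands where Fubini for ordinary Bochner integrals applies, and then passing to the limit using the It\^o isometry~\eqref{eq:ito-R} together with the mixed-norm hypothesis. First I would record the membership claim $\Psi Q^{\frac12} \in L^{2,1}(\R \times S; \LO_2(U;\widetilde U))$. This should follow directly from the Minkowski integral inequality for mixed $L^{1,2}$ versus $L^{2,1}$ norms on the product measure space $S \times \R$: interchanging the order of the inner $L^2(\R)$-norm and the outer $L^1(S)$-norm can only decrease (or preserve) the norm in the direction $L^{1,2} \hookrightarrow L^{2,1}$, so finiteness of $\norm{\Psi Q^{\frac12}}{L^{1,2}(S\times\R)}$ gives finiteness of $\norm{\Psi Q^{\frac12}}{L^{2,1}(\R\times S)}$. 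This also guarantees that both sides of~\eqref{eq:interchange-det-stoch-int} are well-defined: the inner stochastic integral $\int_\R \Psi(s,r)\rd W^Q(r)$ exists for $\mu$-a.e.\ $s$ by the existence criterion $\Psi(s,\cdot)Q^{\frac12}\in L^2(\R;\LO_2)$, and the resulting map $s\mapsto \int_\R \Psi(s,r)\rd W^Q(r)$ is Bochner integrable in $L^2(\Omega;\widetilde U)$ because, by the It\^o isometry, its $L^2(\Omega;\widetilde U)$-norm equals the inner $L^2(\R)$-norm of $\Psi(s,\cdot)Q^{\frac12}$, whose integral over $S$ is precisely the $L^{1,2}$-norm and hence finite.

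Next I would establish the identity for \emph{simple} integrands of the form $\Psi(s,r) = \sum_{k} \mathbf 1_{A_k}(s)\,\Phi_k(r)$, where $A_k \in \mathscr A$ have finite $\mu$-measure and $\Phi_k Q^{\frac12}\in L^2(\R;\LO_2)$. For such $\Psi$ both sides of~\eqref{eq:interchange-det-stoch-int} reduce to finite linear combinations $\sum_k \mu(A_k)\int_\R \Phi_k(r)\rd W^Q(r)$, so the formula holds trivially by linearity of the stochastic integral. The remaining step is approximation: I would choose a sequence of simple integrands $\Psi_n$ with $\Psi_n Q^{\frac12} \to \Psi Q^{\frac12}$ in $L^{1,2}(S\times\R;\LO_2)$, which is possible by density of simple functions in the mixed-norm Bochner space (see Appendix~\ref{app:mixed-norm-bochner}). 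Then I would show that each side of~\eqref{eq:interchange-det-stoch-int} for $\Psi_n$ converges in $L^2(\Omega;\widetilde U)$ to the corresponding side for $\Psi$.

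For the right-hand side, convergence is immediate: by the It\^o isometry, $\norm{\int_\R\int_S(\Psi_n-\Psi)\rd\mu\rd W^Q}{L^2(\Omega;\widetilde U)}$ equals the $L^2(\R;\LO_2)$-norm of $\int_S(\Psi_n-\Psi)Q^{\frac12}\rd\mu$, which is bounded by the $L^{2,1}$-norm of $(\Psi_n-\Psi)Q^{\frac12}$ via Minkowski, and this in turn is controlled by the $L^{1,2}$-norm, which tends to zero. For the left-hand side, I would bound $\norm{\int_S[\int_\R(\Psi_n-\Psi)\rd W^Q]\rd\mu}{L^2(\Omega;\widetilde U)}$ by $\int_S \norm{\int_\R(\Psi_n-\Psi)(s,\cdot)\rd W^Q}{L^2(\Omega;\widetilde U)}\rd\mu(s)$, then apply the It\^o isometry pointwise in $s$ to rewrite the integrand as the inner $L^2(\R)$-norm of $(\Psi_n-\Psi)(s,\cdot)Q^{\frac12}$; the resulting integral over $S$ is exactly the $L^{1,2}$-norm of $(\Psi_n-\Psi)Q^{\frac12}$, which vanishes as $n\to\infty$. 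Since both sides converge to the respective sides for $\Psi$ and the identity holds for each $\Psi_n$, it persists in the limit, $\bbP$-a.s.

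The main obstacle I anticipate is purely bookkeeping: ensuring the mixed-norm inequalities are applied in the correct direction and that the density of simple functions is available in the precise $L^{1,2}$ space at hand, including the subtlety that approximants must simultaneously control both the outer-integrability needed for the Bochner integral over $S$ on the left and the inner-integrability needed for the stochastic integral over $\R$. Provided the mixed-norm Bochner framework of Appendix~\ref{app:mixed-norm-bochner} supplies the Minkowski inequality $\norm{\cdot}{L^{2,1}}\le\norm{\cdot}{L^{1,2}}$ and density of simple functions, no genuinely hard analytic estimate arises; the It\^o isometry does all the heavy lifting in converting stochastic $L^2(\Omega)$-norms into deterministic mixed Bochner norms.
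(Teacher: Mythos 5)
Your proposal is correct and follows essentially the same route as the paper's proof: approximation of $\Psi$ by simple integrands (for which the interchange is immediate by linearity), the Minkowski integral inequality to pass from $L^{1,2}$-control to $L^{2,1}$-control, and the It\^o isometry~\eqref{eq:ito-R} to convert both stochastic $L^2(\Omega;\widetilde U)$-convergences into the deterministic mixed-norm convergences. Your write-up merely spells out the well-definedness of both sides and the two limiting arguments that the paper compresses into ``by the respective definitions of the integrals involved.''
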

\begin{proof}
	Let $(\Psi_n)_{n\in \N}$ be a sequence of functions
	$\Psi_n \from S \times \R \to \mathscr L(U; \widetilde U)$ 
	which are linear combinations of 
	functions of the form 
	$(\mathbf 1_B \otimes \mathbf 1_{(a, b]}) \otimes T$
	for $a < b$, $B \in \mathscr A$ with $\mu(B) < \infty$ 
	and $T \in \LO(U; \widetilde U)$ such that
	\begin{equation}\label{eq:simple-func-conv-L1L2}
		\lim_{n \to \infty} 
		\int_{S} 
		\biggl[ \int_\R \| [\Psi(s,r) - \Psi_n(s,r)] Q^\frac{1}{2}\|^2_{\mathscr L_2(U; \widetilde U)} \rd r \biggr]^{\frac{1}{2}}  
		\rd \mu(s) 
		=
		0.
	\end{equation}
	An application of the Minkowski inequality for integrals~\cite[Section~A.1]{Stein1970}
	to the function $g_n(s, r) := \| [\Psi(s,r) - \Psi_n(s,r)] Q^\frac{1}{2}\|_{\mathscr L_2(U; \widetilde U)}$
	for each $n \in \N$
	then yields
	\[
	\biggl[  
	\int_{\bbR} 
	\biggl| 
	\int_S g_n(s, r) \rd \mu(s) 
	\biggr|^2 
	\rd r \biggr]^{\frac{1}{2}}
	\leq 
	\int_{S} 
	\biggl[ 
	\int_{\bbR} 
	|g_n(s, r)|^2 \rd r \biggr]^{\frac{1}{2}} 
	\rd \mu(s),
	\]
	which together with~\eqref{eq:simple-func-conv-L1L2} shows that 
	\begin{equation}\label{eq:simple-func-conv-L2L1}
		\lim_{n \to \infty} 
		\int_{\R} 
		\biggl| \int_S \| [\Psi(s,r) - \Psi_n(s,r)] Q^\frac{1}{2}\|_{\mathscr L_2(U; \widetilde U)} \rd \mu(s) \biggr|^2  
		\rd r
		=
		0.
	\end{equation} 
	By the respective definitions
	of the (deterministic and stochastic) integrals
	involved, 
	\eqref{eq:simple-func-conv-L1L2} 
	and \eqref{eq:simple-func-conv-L2L1} 
	imply the first and last steps of
	\begin{align*} 
		&\int_{S} \int_\R \Psi(s,r) \rd W^Q(r) \rd \mu(s)
		=
		\lim_{n\to \infty}
		\int_{S} \int_\R \Psi_n(s,r) \rd W^Q(r) \rd \mu(s)
		\\
		&\:
		=
		\lim_{n\to \infty}
		\int_\R \int_{S} \Psi_n(s,r) \rd \mu(s) \rd W^Q(r) 
		=
		\int_\R \int_{S} \Psi(s,r) \rd \mu(s) \rd W^Q(r) 
		\text{ in } L^2(\Omega; \widetilde U),  
	\end{align*}  
	the second identity can be verified by direct computation
	for simple functions.
\end{proof}

\section{Markov properties for Hilbert space valued stochastic processes}
\label{sec:markov-properties}
Let $X = (X(t))_{t\in\bbT}$ be a
$U$-valued stochastic process  
indexed by
$\bbT$, see Subsection~\ref{subsec:prelims:Notation}.
Intuitively, $X$ is said to be a Markov process if,
at any instant,
its past and future states
are independent conditional on the present.
Varying the amount of information from  the present gives rise to different
Markov properties, which we will list in decreasing order of strength.

\subsection{Simple Markov property}\label{subsec:markov:simple}
The following definition
is often just referred to as the Markov property, see also~\cite[p.~77]{DaPrato2014} 
or \cite[Equation~(6.2), p.\ 81]{Doob1954}.
We use the adjective \emph{simple} to differentiate it from
the weaker notions of Markov behavior which will be given below.
\begin{definition}\label{def:simple-markov}
	An $(\cF_t)_{t\in\bbT}$-adapted $U$-valued
	stochastic process $(X(t))_{t\in\bbT}$ is said to have
	the \emph{simple Markov property} if for all 
	$s \le t$ and
	$B \in \cB(U)$, we have
	\begin{equation}\label{eq:markov-cond-prob}
		\bbP(X(t) \in B \mid \cF_s) = \bbP(X(t) \in B \mid X(s)),
		\quad 
		\bbP\text{-a.s.}
	\end{equation}
\end{definition}

Note that~\eqref{eq:markov-cond-prob} in the above definition can equivalently be replaced by
\begin{equation}\label{eq:simple-Markov-phi}
	\E[\varphi(X(t)) \mid \cF_s] = \E[\varphi(X(t)) \mid X(s)], \quad \bbP\text{-a.s.},
\end{equation}
for all $s \le t$ and $\varphi$ belonging to the Banach space $B_b(U)$
of bounded measurable functions from 
$U$ to $\R$ equipped with the norm $\norm{\varphi}{B_b(U)} := \sup_{x \in U} |f(x)|$;
this can be
shown using approximation by simple functions.

In particular, this viewpoint suggests the following characterization of the simple Markov property
by means of \emph{transition operators}, which is sometimes taken as the definition, 
for instance in~\cite[Chapter~III, Definition~1.3]{RevuzYor1999}.

\begin{proposition}\label{def:markov-TO}
	An $(\cF_t)_{t\in\bbT}$-adapted $U$-valued
	stochastic process $(X(t))_{t\in\bbT}$ 
	is simple Markov if and only if there
	exists a family $(T_{s,t})_{s \le t \in \bbT}$ 
	of linear operators on $B_b(U)$ satisfying, 
	for all $s \le t$ in $\bbT$ and $\varphi \in B_b(U)$,
	\begin{equation}\label{eq:markov-cond-prob-TO}
		\E[\varphi(X(t)) \mid \cF_s] = T_{s,t} \varphi(X(s)),
		\quad 
		\bbP\text{-a.s.}
	\end{equation}
	In this case, the \emph{transition operators} $(T_{s,t})_{s \le t \in \bbT}$ have the following properties:
	\begin{enumerate}[label=(TO\arabic{*})]
		\item $T_{s,t}\varphi(x) \ge 0$ for all $x \in U$ if $\varphi \in B_b(U)$ is non-negative,
		\label{item:TO1}
		\item $T_{s,t}\mathbf 1_U = \mathbf 1_U$,\label{item:TO2}
		\item $T_{s,u} \varphi(X(s)) = T_{s,t} T_{t,u}\varphi(X(s))$\label{item:TO3}, $\bbP$-a.s., 
		for all $\varphi \in B_b(U)$ and $s \le t \le u$.
	\end{enumerate}
\end{proposition}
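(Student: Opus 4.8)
The plan is to prove the equivalence by establishing both implications and then, in the forward direction, to verify \ref{item:TO1}--\ref{item:TO3} for a concretely constructed family $(T_{s,t})_{s \le t}$. The guiding idea is to realize the transition operators through a regular conditional distribution, which is available because a separable Hilbert space is a Polish space, hence standard Borel.

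For the implication ``simple Markov $\Rightarrow$ existence of $(T_{s,t})_{s\le t}$'', I would fix $s \le t$ and invoke the existence of a \emph{regular conditional distribution} of $X(t)$ given $X(s)$: a probability kernel $p_{s,t} \from U \times \cB(U) \to [0,1]$ such that $p_{s,t}(\,\cdot\,,B)$ is Borel for each $B \in \cB(U)$, each $p_{s,t}(x,\,\cdot\,)$ is a probability measure, and $\bbP(X(t) \in B \mid X(s)) = p_{s,t}(X(s),B)$ $\bbP$-a.s. I would then \emph{define} $T_{s,t}\varphi(x) := \int_U \varphi(y)\,p_{s,t}(x,\rd y)$ for $\varphi \in B_b(U)$. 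A monotone-class argument, anchored at indicators $\varphi = \mathbf 1_B$ (where the identity is exactly the defining property of the kernel) and extended to simple and then bounded measurable $\varphi$, gives $\E[\varphi(X(t)) \mid X(s)] = T_{s,t}\varphi(X(s))$ $\bbP$-a.s.; combined with the equivalent Markov formulation \eqref{eq:simple-Markov-phi}, this yields \eqref{eq:markov-cond-prob-TO}. That each $T_{s,t}$ is a genuine linear operator on $B_b(U)$ is routine: measurability of $x \mapsto T_{s,t}\varphi(x)$ follows from the kernel property via the monotone class theorem, and $\norm{T_{s,t}\varphi}{B_b(U)} \le \norm{\varphi}{B_b(U)}$ because $p_{s,t}(x,\,\cdot\,)$ is a probability measure.

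The converse is immediate from the tower property. If $(T_{s,t})_{s\le t}$ satisfies \eqref{eq:markov-cond-prob-TO}, then $T_{s,t}\varphi(X(s))$ is $\sigma(X(s))$-measurable, since $X$ is adapted (so $X(s)$ is $\cF_s$-measurable) and $T_{s,t}\varphi \in B_b(U)$. Applying $\E[\,\cdot \mid X(s)]$ to both sides of \eqref{eq:markov-cond-prob-TO} and using $\sigma(X(s)) \subseteq \cF_s$ gives $\E[\varphi(X(t)) \mid X(s)] = \E[\,T_{s,t}\varphi(X(s)) \mid X(s)] = T_{s,t}\varphi(X(s)) = \E[\varphi(X(t)) \mid \cF_s]$ $\bbP$-a.s., which is precisely \eqref{eq:simple-Markov-phi} and hence the simple Markov property.

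For the listed properties, \ref{item:TO1} and \ref{item:TO2} hold \emph{pointwise} in $x$ straight from the kernel representation: $T_{s,t}\varphi(x)$ is the integral of a nonnegative function against the positive measure $p_{s,t}(x,\,\cdot\,)$, and $T_{s,t}\mathbf 1_U(x) = p_{s,t}(x,U) = 1$ for every $x \in U$. For the Chapman--Kolmogorov relation \ref{item:TO3} I would use the tower property across $\cF_s \subseteq \cF_t$ together with \eqref{eq:markov-cond-prob-TO}: setting $\psi := T_{t,u}\varphi \in B_b(U)$, one obtains $T_{s,u}\varphi(X(s)) = \E[\varphi(X(u)) \mid \cF_s] = \E[\,\E[\varphi(X(u)) \mid \cF_t] \mid \cF_s] = \E[\psi(X(t)) \mid \cF_s] = T_{s,t}\psi(X(s)) = T_{s,t}T_{t,u}\varphi(X(s))$ $\bbP$-a.s., which is \ref{item:TO3}. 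I expect the principal obstacle here to be foundational rather than computational: the subtlety is to produce \emph{genuine} operators on $B_b(U)$ with the \emph{for all $x$} validity of \ref{item:TO1}--\ref{item:TO2}, as opposed to maps defined only up to $\bbP_{X(s)}$-null sets; this is exactly what the regular conditional distribution secures, and it is the only place where separability of $U$ is essential. Consistently with the non-uniqueness of the $T_{s,t}$ (they are pinned down only $\bbP_{X(s)}$-a.e.\ by \eqref{eq:markov-cond-prob-TO}), the Chapman--Kolmogorov identity \ref{item:TO3} is asserted only $\bbP$-a.s., i.e.\ for $\bbP_{X(s)}$-a.e.\ $x$.
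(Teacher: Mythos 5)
Your proof is correct and takes essentially the same route as the paper's: the converse via applying $\E[\,\cdot \mid X(s)]$ to \eqref{eq:markov-cond-prob-TO}, and the forward direction via the regular conditional distribution (available since the separable Hilbert space $U$ is Polish) used to define $T_{s,t}\varphi(x) = \int_U \varphi(y)\,\pi_{s,t}(x,\mathrm{d}y)$, with simple-function/monotone-class approximation yielding \eqref{eq:markov-cond-prob-TO}, the probability-measure property of the kernel giving \ref{item:TO1}--\ref{item:TO2}, and the tower property giving \ref{item:TO3}. Your explicit remarks on pointwise versus $\bbP$-a.s.\ validity and on the role of separability only make more precise what the paper leaves implicit.
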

\begin{proof}
	The sufficiency part of the assertion can be verified by applying 
	$\E[\; \cdot \mid X(s)]$
	on both sides of~\eqref{eq:markov-cond-prob-TO} and using
	elementary properties of conditional expectations to conclude that
	$\E[\varphi(X(t)) \mid X(s)]=T_{s,t} \varphi(X(s))$ holds 
	$\bbP$-a.s.\ for any $\varphi \in B_b(U)$.
	This implies~\eqref{eq:simple-Markov-phi}, hence $X$ has the simple Markov property.
	
	Conversely, suppose that $X$ is simple Markov. 
	Since $U$ is separable 
	and thus a Polish space, there exists a regular conditional 
	distribution of $X(t)$ given $X(s)$
	for each $s \le t$~\cite[Theorem~8.5]{Kallenberg2002},
	i.e., a mapping
	$\pi_{s,t} \from U \times \cB(U) \to [0,1]$ such that
	for any $x \in U$ and $B \in \cB(U)$,
	$\pi_{s,t}(x, \,\cdot\,)$ is a probability measure on $(U, \cB(U))$,
	$\pi_{s,t}(\,\cdot\,, B) \from U \to [0,1]$ is Borel measurable
	and 
	\[
	\pi_{s,t}(X(s), B) 
	= 
	\bbP(X(t) \in B \mid X(s))
	=
	\bbP(X(t) \in B \mid \cF_s),
	\quad
	\bbP\text{-a.s.}
	\]
	We  use this to define the transition operator
	\[
	T_{s,t} \varphi(x) 
	:=
	\int_U \varphi(y) \, \pi_{s,t}(x, \mathrm d y),
	\quad
	\varphi \in B_b(U), \; x \in U.
	\]
	Properties~\ref{item:TO1} and~\ref{item:TO2} are consequences of the fact that
	$\pi_{s,t}(x, \,\cdot\,)$ is a probability measure on $\cB(U)$ for each $x \in U$.
	Equation~\eqref{eq:markov-cond-prob-TO} 
	can be verified using approximation by simple functions as before;
	in conjunction with the tower property 
	of conditional expectations, we find~\ref{item:TO3}.
\end{proof}

Property~\ref{item:TO3} is also known as the \emph{Chapman--Kolmogorov relation}.
If $T_{s,t}$ only depends on the difference $t-s$, then $X$ is said to be
\emph{time-homogeneous} and we write $T_{t} := T_{0,t}$, 
so that~\ref{item:TO3} reduces to the semigroup law $T_{t+s} = T_t T_s$.

Lastly, we can also characterize the simple Markov property in terms of conditional independence:
By Theorem~\ref{thm:Doob-cond-ind} in Appendix~\ref{app:aux},
the simple Markov property is equivalent to
\[
\forall s \le t : \quad  
\cF_s \ind_{X(s)} \sigma(X(t)).
\]
In fact, according to \cite[Lemma~11.1]{Kallenberg2002}, this
is in turn equivalent to
\begin{equation}
	\forall s \in \bbT : \quad  
	\cF_s \ind_{X(s)} \sigma(X(t) : t \ge s).
\end{equation}
%

\subsection{Multiple Markov property}
The following weaker notion of Markov behavior
dates back to Doob, who introduced it
in the context of stationary real-valued Gaussian processes \cite[pp.~271--272]{Doob1944}.
We generalize it to square-integrable $U$-valued processes
with some mean-square differentiability, i.e., 
$(X(t))_{t\in\bbT} \subseteq L^2(\Omega; U)$
such that $t \mapsto X(t)$ is classically differentiable
from $\bbT$ to $L^2(\Omega; U)$.
In works such as~\cite{HidaHitsuda1993, Rozanov1982},
which treat the real-valued setting,
this is called the
\emph{multiple Markov property (in the restricted sense)},
where ``restricted'' refers to the required differentiability; we will omit this descriptor.

\begin{definition}\label{def:Nple-markov-Doob}
	Let $X = (X(t))_{t\in\bbT} \subseteq L^2(\Omega; U)$
	be an $(\cF_t)_{t\in\bbT}$-adapted $U$-valued
	stochastic process and suppose that $N \in \N$. 
	Then $X$ has the
	\emph{$N$-ple Markov property} if it
	has $N-1$ mean square derivatives 
	and, for all
	$s \le t$ in $\bbT$ and
	$B \in \cB(U)$,
	\begin{equation}\label{eq:Nple-markov-cond-prob}
		\bbP(X(t) \in B \mid \cF_s) = \bbP(X(t) \in B \mid X(s), X'(s), \dots, X^{(N-1)}(s)), 
		\quad 
		\bbP\text{-a.s.}
	\end{equation}
\end{definition}
Setting $\mathbf X(t) := (X^{(k)}(t))_{k=0}^{N-1}$, one defines
a process taking values in the direct product Hilbert space $(U^N, \scalar{\, \cdot \, , \, \cdot \,}{U^N}\!)$, where
the inner product
\begin{equation}
	\scalar{\mathbf x, \mathbf y}{U^N} := \sum_{j=1}^N \scalar{x_j,y_j}{U},
	\qquad
	\mathbf x = (x_j)_{j=1}^N, \, \mathbf y = (y_j)_{j=1}^N \in U^N \!,
\end{equation}
induces the product topology on the set $U^N$\!.
In particular, the Borel $\sigma$-algebra of $U^N$
satisfies $\cB(U^N) = \otimes^N \cB(U)$ by~\cite[Lemma~1.2]{Kallenberg2002}.
Theorem~\ref{thm:Doob-cond-ind}
in Appendix~\ref{app:aux} again yields an equivalent formulation of the $N$-ple Markov property
in terms of conditional independence:
\begin{equation}\label{eq:Nple-markov-cond-ind}
	\forall s \in \bbT : \quad  
	\cF_s \ind_{\mathbf X(s)}
	\sigma
	(X(t) : t \ge s).
\end{equation}
Note that $\sigma(\mathbf X(s)) \vee \cF_s = \cF_s$ since the mean-square derivatives of $X$ can be replaced
by left derivatives, see the proof of Proposition~\ref{prop:Markov-relations} below.
Arguing as in~\cite[Lemma~11.1]{Kallenberg2002}, one can show that this
is in turn equivalent to
\begin{equation}\label{eq:Nple-markov-cond-prob-vec}
	\forall s \in \bbT : \quad  
	\cF_s \ind_{\mathbf X(s)}
	\sigma
	(\mathbf X(t) : t \ge s)
	,
\end{equation}
which is nothing more than the simple Markov property
for $\mathbf X$. Thus, 
we can apply Proposition~\ref{def:markov-TO} to derive the following characterization.

\begin{corollary}\label{def:Nple-markov-TO}
	An $(\cF_t)_{t\in\bbT}$-adapted and square-integrable 
	$U$-valued stochastic process $X = (X(t))_{t\in\bbT}$
	with $N-1$ mean-square derivatives
	is $N$-ple Markov
	if and only if there
	exists a family $(T_{s,t})_{s \le t \in \bbT}$ 
	of linear operators on $B_b(U^N)$ satisfying, 
	for all $s \le t \le u$ in $\bbT$ and $\varphi \in B_b(U^N)$,
	\begin{equation}\label{eq:Nple-markov-cond-prob-TO}
		\E[\varphi(\mathbf X(t)) \mid \cF_s] = T_{s,t} \varphi(\mathbf X(s)),
		\quad 
		\bbP\text{-a.s.}
	\end{equation}
	In this case, $(T_{s,t})_{s \le t \in \bbT}$ satisfies properties~\ref{item:TO1}--\ref{item:TO3}.
\end{corollary}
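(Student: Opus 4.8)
The plan is to reduce Corollary~\ref{def:Nple-markov-TO} directly to Proposition~\ref{def:markov-TO} applied to the $U^N$-valued process $\mathbf X = (\mathbf X(t))_{t\in\bbT}$ defined by $\mathbf X(t) := (X^{(k)}(t))_{k=0}^{N-1}$. The excerpt has already done most of the conceptual work: it established via~\eqref{eq:Nple-markov-cond-ind}--\eqref{eq:Nple-markov-cond-prob-vec} that the $N$-ple Markov property of $X$ is equivalent to the simple Markov property of $\mathbf X$, where the filtration $(\cF_t)_{t\in\bbT}$ to which $X$ is adapted also serves as the filtration for $\mathbf X$. The key point that legitimizes this is the remark that $\sigma(\mathbf X(s)) \vee \cF_s = \cF_s$, i.e.\ $\mathbf X$ is itself $(\cF_t)_{t\in\bbT}$-adapted, which holds because the mean-square derivatives can be realized as left derivatives and are hence $\cF_s$-measurable.

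First I would verify that $\mathbf X$ meets the hypotheses of Proposition~\ref{def:markov-TO}: it is a $U^N$-valued stochastic process (since $U^N$ is again a real separable Hilbert space under $\scalar{\, \cdot \, , \, \cdot \,}{U^N}$, so the proposition applies verbatim with $U$ replaced by $U^N$), and it is $(\cF_t)_{t\in\bbT}$-adapted by the adaptedness of $X$ together with the measurability of its mean-square left derivatives. I would also recall the identification $\cB(U^N) = \otimes^N \cB(U)$ from~\cite[Lemma~1.2]{Kallenberg2002}, noted in the excerpt, to ensure that $B_b(U^N)$ is the correct test-function space and that~\eqref{eq:Nple-markov-cond-prob-vec} is genuinely the simple Markov property for $\mathbf X$ at the level of sets $B \in \cB(U^N)$.

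The main step is then purely a translation. Applying Proposition~\ref{def:markov-TO} to $\mathbf X$ yields the equivalence between the simple Markov property of $\mathbf X$ and the existence of a family $(T_{s,t})_{s \le t \in \bbT}$ of linear operators on $B_b(U^N)$ satisfying
\[
	\E[\varphi(\mathbf X(t)) \mid \cF_s] = T_{s,t} \varphi(\mathbf X(s)),
	\quad
	\bbP\text{-a.s.},
\]
for all $s \le t$ in $\bbT$ and $\varphi \in B_b(U^N)$, which is exactly~\eqref{eq:Nple-markov-cond-prob-TO}. Since the $N$-ple Markov property of $X$ is equivalent to the simple Markov property of $\mathbf X$ by~\eqref{eq:Nple-markov-cond-prob-vec}, the stated if-and-only-if follows. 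The properties~\ref{item:TO1}--\ref{item:TO3} of $(T_{s,t})_{s \le t \in \bbT}$ are inherited directly from the corresponding conclusions of Proposition~\ref{def:markov-TO} for $\mathbf X$, reading $\mathbf 1_{U^N}$ for $\mathbf 1_U$ in~\ref{item:TO2}.

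I do not anticipate a serious obstacle, as this is a corollary obtained by specialization. The one point requiring care is the adaptedness claim $\sigma(\mathbf X(s)) \vee \cF_s = \cF_s$: one must be sure that the $N-1$ mean-square derivatives are $\cF_s$-measurable so that $\mathbf X$ is adapted and Proposition~\ref{def:markov-TO} genuinely applies. This is precisely the content of replacing the mean-square derivatives by left derivatives, which the excerpt defers to the proof of Proposition~\ref{prop:Markov-relations}; I would cite that argument rather than reprove it here. Everything else is a verbatim reinterpretation of the simple-Markov characterization on the enlarged state space $U^N$.
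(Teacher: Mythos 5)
Your proposal is correct and takes essentially the same route as the paper: the paper gives no separate proof of Corollary~\ref{def:Nple-markov-TO}, since it is obtained precisely by the reduction you describe --- the equivalence of the $N$-ple Markov property of $X$ with the simple Markov property of $\mathbf X$ established in~\eqref{eq:Nple-markov-cond-ind}--\eqref{eq:Nple-markov-cond-prob-vec} (using the $\cF_s$-measurability of the mean-square left derivatives), followed by an application of Proposition~\ref{def:markov-TO} on the state space $U^N\!$. Your explicit checks of the hypotheses (separability of $U^N\!$, adaptedness of $\mathbf X$, and $\cB(U^N) = \otimes^N \cB(U)$) only spell out what the paper leaves implicit.
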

%

\subsection{Weak Markov properties; relations between concepts}
We now define two Markov properties for which
the ``present'' at time $s \in \bbT$ is represented by 
information from neighborhoords around $s$.
As we will prove in Proposition~\ref{prop:Markov-relations} below, these two notions are equivalent.
They appear in, e.g.,~\cite[p.~62]{Rozanov1982} and~\cite[Equation~(5.87), p.~115]{HidaHitsuda1993}.

\begin{definition}\label{def:weak-markov}
	An $(\cF_t)_{t\in\bbT}$-adapted $U$-valued
	stochastic process $(X(t))_{t\in\bbT}$ has
	\begin{enumerate}
		\item 
		the \emph{weak Markov property} if, for every $s \in \bbT$,
		\begin{equation}\label{eq:weak-markov-cond-ind}
			\exists \delta > 0 : \forall \varepsilon \in (0,\delta) : \quad  
			\cF_s \ind_{\mathscr A_\varepsilon(s)} \sigma(X(t) : t \ge s),
		\end{equation}
		where $\mathscr A_\varepsilon(s) := \sigma(X(u) : u \in  (s-\varepsilon, s + \varepsilon) \cap \bbT)$;
		\item the \emph{$\sigma$-Markov property} if, for every $s \in \bbT$,
		\begin{equation}\label{eq:sigma-markov-cond-ind}
			\cF_s \ind_{\partial \mathscr A(s)} \sigma(X(t) : t \ge s),
		\end{equation}
		where $\partial \mathscr A(s) := \bigcap_{\varepsilon > 0} \mathscr A_\varepsilon(s)$.
	\end{enumerate}
\end{definition}

\begin{proposition}\label{prop:Markov-relations}
	Let $X = (X(t))_{t\in\bbT}$ be an $(\cF_t)_{t\in\bbT}$-adapted $U$-valued stochastic process.
	We have the following
	relations between Markov properties:
	\[
	\text{simple Markov} \implies  \text{$\sigma$-Markov} \iff \text{weak Markov}.
	\]
	If $N, M \in \N$ are such that $N \ge M$ and
	$X$ has $N - 1$ mean-square derivatives, then we moreover have
	\[
	\text{$M$-ple Markov} \implies \text{$N$-ple Markov} \implies \text{weak Markov}.
	\]
\end{proposition}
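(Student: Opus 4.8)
The plan is to prove the four implications in Proposition~\ref{prop:Markov-relations} by reducing everything to the conditional-independence characterizations already established in the excerpt, using the tower and monotonicity properties of conditional independence. Throughout I write $\mathscr F_s^\infty := \sigma(X(t):t\ge s)$ for the ``future'' $\sigma$-algebra.

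First I would handle the chain for the $N$-ple properties. For the implication \emph{$M$-ple Markov $\implies$ $N$-ple Markov} when $N \ge M$, note from~\eqref{eq:Nple-markov-cond-ind} that the $M$-ple property reads $\cF_s \ind_{\mathbf X_M(s)} \mathscr F_s^\infty$, where $\mathbf X_M(s) = (X^{(k)}(s))_{k=0}^{M-1}$. Since $M-1 \le N-1$ the vector $\mathbf X_M(s)$ is a measurable function of $\mathbf X_N(s) = (X^{(k)}(s))_{k=0}^{N-1}$, so $\sigma(\mathbf X_M(s)) \subseteq \sigma(\mathbf X_N(s)) \subseteq \cF_s$. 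Conditioning on a \emph{larger} intermediate $\sigma$-algebra that is itself contained in $\cF_s$ preserves conditional independence of $\cF_s$ from anything; concretely, if $\cF_s \ind_{\cG} \mathscr F_s^\infty$ and $\cG \subseteq \cH \subseteq \cF_s$, then $\cF_s \ind_{\cH} \mathscr F_s^\infty$. I would invoke (or quickly cite from Appendix~\ref{app:aux}) this elementary enlargement lemma for conditional independence to conclude the $N$-ple property. I expect the only subtlety here is the remark flagged in the text that the mean-square derivatives can be taken as left derivatives, so that $\sigma(\mathbf X_N(s)) \subseteq \cF_s$ genuinely holds; I would verify this by writing $X^{(k)}(s)$ as an $L^2$-limit of difference quotients built from $X(s), X(s-h),\dots$, all of which are $\cF_s$-measurable, and noting the limit remains $\cF_s$-measurable by completeness of the filtration.

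Next, for \emph{$N$-ple Markov $\implies$ weak Markov}, I would show that $\sigma(\mathbf X_N(s)) \subseteq \mathscr A_\varepsilon(s)$ for every $\varepsilon>0$. Indeed each $X^{(k)}(s)$ is an $L^2$-limit of difference quotients of $X$ evaluated at points in $(s-\varepsilon,s+\varepsilon)\cap\bbT$, hence is measurable with respect to $\mathscr A_\varepsilon(s)$. Given the $N$-ple property $\cF_s \ind_{\mathbf X_N(s)} \mathscr F_s^\infty$ and the inclusion $\sigma(\mathbf X_N(s)) \subseteq \mathscr A_\varepsilon(s)$, the same enlargement lemma yields $\cF_s \ind_{\mathscr A_\varepsilon(s)} \mathscr F_s^\infty$ for \emph{all} $\varepsilon>0$, which is stronger than the existential requirement~\eqref{eq:weak-markov-cond-ind}. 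The implication \emph{simple Markov $\implies$ $\sigma$-Markov} then follows as the special case $N=1$ combined with the equivalence of $\sigma$- and weak Markov, or more directly: simple Markov gives $\cF_s \ind_{X(s)} \mathscr F_s^\infty$, and since $\sigma(X(s)) \subseteq \partial\mathscr A(s)$ (as $X(s)$ is $\mathscr A_\varepsilon(s)$-measurable for every $\varepsilon$), the enlargement lemma again produces~\eqref{eq:sigma-markov-cond-ind}.

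The main obstacle is the equivalence \emph{$\sigma$-Markov $\iff$ weak Markov}, since here neither conditioning $\sigma$-algebra contains the other in an obvious way and one cannot simply enlarge. The direction $\sigma$-Markov $\implies$ weak Markov is again a monotonicity step, since $\partial\mathscr A(s) = \bigcap_{\varepsilon>0}\mathscr A_\varepsilon(s) \subseteq \mathscr A_\varepsilon(s)$; applying the enlargement lemma upgrades conditioning from $\partial\mathscr A(s)$ to each $\mathscr A_\varepsilon(s)$, giving the weak property with any $\delta>0$. The harder direction is weak Markov $\implies$ $\sigma$-Markov, where I must pass from conditioning on the shrinking family $\mathscr A_\varepsilon(s)$ down to their intersection $\partial\mathscr A(s)$. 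For this I would use a martingale-type limit theorem for conditional independence under \emph{decreasing} $\sigma$-algebras: as $\varepsilon\downarrow 0$ the family $\mathscr A_\varepsilon(s)$ decreases to $\partial\mathscr A(s)$, and by reverse martingale convergence $\bbP(G \mid \mathscr A_\varepsilon(s)) \to \bbP(G \mid \partial\mathscr A(s))$ in $L^1$ and $\bbP$-a.s.\ along a subsequence for each fixed event $G$. Writing the conditional independence~\eqref{eq:weak-markov-cond-ind} as the factorization $\bbP(G_1\cap G_2 \mid \mathscr A_\varepsilon(s)) = \bbP(G_1 \mid \mathscr A_\varepsilon(s))\,\bbP(G_2 \mid \mathscr A_\varepsilon(s))$ for $G_1 \in \cF_s$, $G_2 \in \mathscr F_s^\infty$, I would pass to the limit $\varepsilon\downarrow 0$ on both sides to obtain the factorization over $\partial\mathscr A(s)$, which is exactly~\eqref{eq:sigma-markov-cond-ind}. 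The care needed is to ensure the limit can be taken simultaneously and to handle the product on the right, which I would resolve either by a dominated-convergence argument (all conditional probabilities are bounded by $1$) or by citing the corresponding conditional-independence continuity result from Appendix~\ref{app:aux}.
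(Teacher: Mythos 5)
Your overall strategy---measurability of the mean-square derivatives via left difference quotients, conditional-independence manipulations, and backward (reverse) martingale convergence for the direction weak $\implies$ $\sigma$-Markov---is the same as the paper's, and your weak $\implies$ $\sigma$ argument is essentially identical to the paper's proof (factorize over $\mathscr A_{\nicefrac{1}{n}}(s)$ and pass to the $\bbP$-a.s.\ limit). However, there is a genuine gap in every step where you enlarge the conditioning $\sigma$-algebra to $\mathscr A_\varepsilon(s)$ or to $\partial\mathscr A(s)$. The enlargement lemma you state requires the enlarged conditioning $\sigma$-algebra to be contained in $\cF_s$ (this is exactly Lemma~\ref{lem:cond-ind-props}\ref{lem:cond-ind-props:b}), but $\mathscr A_\varepsilon(s)=\sigma(X(u):u\in(s-\varepsilon,s+\varepsilon)\cap\bbT)$ contains the future observations $X(u)$, $u>s$, and hence is \emph{not} contained in $\cF_s$ for an adapted process; the same is true of $\partial\mathscr A(s)$. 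This is not a cosmetic issue: without a structural hypothesis, enlarging the conditioning $\sigma$-algebra can destroy conditional independence. For instance, if $\xi_1,\xi_2$ are independent uniform $\{0,1\}$-valued random variables, then $\sigma(\xi_1)\ind\sigma(\xi_2)$, yet $\sigma(\xi_1)$ and $\sigma(\xi_2)$ are not conditionally independent given $\sigma(\xi_1+\xi_2)$. Consequently, your derivations of $N$-ple $\implies$ weak, of $\sigma$ $\implies$ weak, and of the ``more direct'' simple $\implies$ $\sigma$ all invoke a principle whose hypotheses fail in the situation at hand.

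The repair is the stronger statement the paper isolates as Lemma~\ref{lem:cond-ind-props}\ref{lem:cond-ind-props:c}: if $\cG_1\ind_{\cH}\cG_2$, then $\cG_1\ind_{\cH'}\cG_2$ for any $\cH'\supseteq\cH$ of the form $\cH'=\cH_1'\vee\cH_2'$ with $\cH_1'\subseteq\cG_1\vee\cH$ and $\cH_2'\subseteq\cG_2\vee\cH$. This applies to $\cH'=\mathscr A_\varepsilon(s)$ precisely because of the decomposition~\eqref{eq:H1tilde-H2tilde}: one writes $\mathscr A_\varepsilon(s)=\sigma(X(u):u\in(s-\varepsilon,s]\cap\bbT)\vee\sigma(X(u):u\in[s,s+\varepsilon)\cap\bbT)$, where the first factor lies in $\cF_s$ and the second in $\sigma(X(t):t\ge s)$. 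With this lemma in place, your $N$-ple $\implies$ weak and $\sigma$ $\implies$ weak steps go through verbatim. Note also that your direct route for simple $\implies$ $\sigma$ (enlarging from $\sigma(X(s))$ to $\partial\mathscr A(s)$) remains problematic even with the corrected lemma, since $\partial\mathscr A(s)$ is a decreasing intersection of joins and is not obviously of the required join form; you should instead rely on your alternative route, simple $=$ $1$-ple $\implies$ weak $\implies$ $\sigma$, which is exactly the path the paper takes.
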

\begin{proof}
	If $X$ has the weak Markov property, then by definition
	we have the following identity for fixed $s \in \bbT$, $B_- \in \cF_s$ and $B_+ \in \sigma(X(t) : t \ge s)$:
	\begin{equation}\label{eq:def-weakmarkov}
		\bbP(B_- \mid \mathscr A_{\nicefrac{1}{n}}(s)) \bbP(B_+ \mid \mathscr A_{\nicefrac{1}{n}}(s)) 
		=
		\bbP(B_- \cap B_+ \mid \mathscr A_{\nicefrac{1}{n}}(s)),
		\quad \bbP\text{-a.s.},
	\end{equation}
	whenever $n \in \N$ is large enough.
	Note that $(\mathscr G_n)_{n\in\N} := (\mathscr A_{\nicefrac{1}{n}}(s))_{n\in\N}$ is 
	a non-increasing sequence of
	sub-$\sigma$-algebras of $\cF$, i.e., a backward filtration on $(\Omega, \cF, \bbP)$.
	Therefore, $(\bbP(B \mid \mathscr G_n))_{n\in\N}$ is a backward martingale with respect to
	$(\mathscr G_n)_{n\in\N}$ for any $B \in \cF$.
	Combined with the fact that $\bigcap_{n \in \N} \mathscr G_n = \partial \mathscr A(s)$,
	the backward martingale convergence theorem~\cite[Section~{12.7}, Theorem~4]{GrimmettStirzaker2001} implies that we may take
	the $\bbP$-a.s.\ limit as $n \to \infty$ in~\eqref{eq:def-weakmarkov} to find that $X$ is $\sigma$-Markov.
	
	Now let $N, M \in \N$ with $N \ge M$
	be such that $X$ has the $M$-ple Markov
	property and $N - 1$ mean-square derivatives.
	When considering $\sigma(X'(s))$ at $s \in \bbT$,
	we can restrict ourselves to mean-square left derivatives, i.e.,
	we consider the sequence
	\begin{equation}\label{eq:difference-quotients}
		(\Delta_n(s))_{n\in\N} := (n[X(s) - X(s-n^{-1})])_{n\in\N}
	\end{equation}
	converging to $X'(s)$ in the $L^2(\Omega; U)$-norm as $n \to \infty$.
	Consequently, there exists a subsequence $(\Delta_{n_k}(s))_{k\in\N}$ such that
	$\Delta_{n_k}(s) \to X'(s)$, $\bbP$-a.s., as $k \to \infty$.
	Since $\Delta_{n_k}(s)$ is $\cF_s$-measurable for each $k \in \N$, we conclude that
	$X'(s)$ is $\cF_s$-measurable and thus
	$\sigma(X'(s)) \subseteq \cF_s$. By induction, this extends to
	\[
	\sigma(X(s), X'(s), \dots, X^{(M-1)}(s)) \subseteq \sigma(X(s), X'(s), \dots, X^{(N-1)}(s)) \subseteq \cF_s,
	\]
	so that Lemma~\ref{lem:cond-ind-props}\ref{lem:cond-ind-props:b} yields the $N$-ple 
	Markov property as formulated in~\eqref{eq:Nple-markov-cond-ind}.
	
	It remains
	to show that the $N$-ple Markov property for $N \in \N$ 
	and the $\sigma$-Markov property imply the weak Markovianity of $X$.
	Fixing $s \in \bbT$ and $\varepsilon > 0$, we can set
	\begin{equation}\label{eq:H1tilde-H2tilde}
		\begin{aligned}
			&\cH_1' := \sigma(X(u) : u \in (s - \varepsilon, s] \cap \bbT) \subseteq \cF_s, \\
			&\cH_2' := \sigma(X(u) : u \in [s, s+\varepsilon) \cap \bbT) \subseteq \sigma( X(t) : t \in [s,\infty) \cap \bbT ), \\
			&\cH' := \cH_1' \vee \cH_2' = \mathscr A_\varepsilon(s).
		\end{aligned}
	\end{equation}
	Since $\sigma(X(s)) \subseteq \partial \mathscr A(s) \subseteq \mathscr A_\varepsilon(s)$,
	by Lemma~\ref{lem:cond-ind-props}\ref{lem:cond-ind-props:c} the simple (i.e., 1-ple) Markov or $\sigma$-Markov property
	of $X$ would imply
	\begin{equation}\label{eq:weakMarkov-conclusion}
		\cF_s \ind_{\mathscr A_\varepsilon(s)} \sigma( X(t) : t \in [s,\infty) \cap \bbT ),
	\end{equation}
	and thus the weak Markov property since $\varepsilon > 0$ was arbitrary.
	It remains to show that~\eqref{eq:weakMarkov-conclusion} also holds if $X$ is $N$-ple Markov.
	Choosing $K \in \N$ so large that $n_k > \varepsilon^{-1}$ 
	for all $k \ge K$, 
	we find that $(\Delta_{n_k}(s))_{k \ge K}$ (see~\eqref{eq:difference-quotients})
	is a sequence of 
	$\mathscr A_\varepsilon(s)$-measurable random variables
	converging $\bbP$-a.s.\ to $X'(s)$. As before, this procedure
	can be repeated inductively to yield
	\[
	\cH := \sigma(X(s), X'(s), \dots, X^{(N-1)}(s))
	\subseteq 
	\mathscr A_\varepsilon(s).
	\]
	This justifies the use of Lemma~\ref{lem:cond-ind-props}\ref{lem:cond-ind-props:c}
	to establish~\eqref{eq:weakMarkov-conclusion}
	for the remaining case, and the desired conclusion follows.
\end{proof}

\begin{remark}\label{rem:Markov-relations-weak}
	An analog to Definition~\ref{def:weak-markov} for \emph{generalized} $U$-valued stochastic processes
	$(X(\phi))_{\phi \in C_c^\infty(\bbT)}$
	is obtained by replacing $\sigma(X(u) : u \in J)$
	with the natural $\sigma$-algebra
	$\sigma(X(\phi) : \phi \in C_c^\infty(\bbT), \, \supp \phi \subseteq J)$ 
	generated by $X$ on an open set $J \subseteq \bbT$.
	Since pointwise evaluation is not meaningful for such processes, 
	there is no analog to the simple Markov property.
	Furthermore, 
	although the proof of the implication
	\[
	\text{weak Markov} \implies \text{$\sigma$-Markov}
	\]
	carries over, its converse now fails:
	The distributional derivative of white noise is 
	a generalized process which is $\sigma$-Markov but not weak Markov, see~\cite[p.~62]{Rozanov1982}.
\end{remark}

\subsection{Characterization of weakly Markov Gaussian processes}
\label{subsec:weakMarkov-gaussian}
A $U$-valued stochastic process $X = (X(t))_{t \in \bbT}$ is said to be Gaussian if,
for any $n \in \N$ and $\{t_i\}_{i=1}^n \subseteq \bbT$,
the $U^n$-valued random variable
$(X(t_1), X(t_2), \dots, X(t_n))$ is Gaussian.
For such processes, we shall characterize the weak Markov property of Definition~\ref{def:weak-markov}
by extending the theory of Rozanov~\cite{Rozanov1982}
from real-valued to $U$-valued processes.

We consider the case of a mean-square continuous Gaussian process $X$
which
is the solution of a stochastic evolution equation of the form
$\cL X = \dot W$ for some linear operator $\cL \from \dom{\cL} \subseteq L^2(\bbT; U) \to L^2(\bbT; U)$;
here, $\dot W$ denotes spatiotemporal Gaussian white noise, 
cf.~\eqref{eq:abstract-spatial-SPDE} and~\eqref{eq:the-SPDE-intro}.
More precisely, we assume that $\cL$ has a bounded inverse $\cL^{-1}$ 
which \emph{colors} $X$, meaning
\begin{equation}\label{eq:X-colored-by-Linverse}
	\langle X, \phi \rangle_{L^2(\bbT; U)} \overset{d}{=} \mathscr W(\cL^{-*} \phi), 
	\quad 
	\forall 
	\phi \in C_c^\infty(\bbT; U),
\end{equation}
where $\cL^{-*} \coloneqq (\cL^{-1})^* = (\cL^*)^{-1}$ and $\overset{d}{=}$ indicates equality in distribution.
Here, $(\mathscr W(f))_{f \in L^2(\bbT; U)} \subseteq L^2(\Omega)$ is an $L^2(\bbT; U)$-isonormal Gaussian process,
i.e., a family of mean-zero and real-valued Gaussian random variables satisfying
\begin{equation}
	\scalar{\mathscr W(f), \mathscr W(g)}{L^2(\Omega)}
	=
	\scalar{f, g}{L^2(\bbT; U)},
	\quad 
	\forall f, g \in L^2(\bbT;U). \label{eq:isonormal}
\end{equation}
The following theorem then states that the locality of the
\emph{precision operator} $\cL^* \cL$ is necessary for
$X$ to possess the weak Markov property.
\begin{theorem}\label{thm:weakMarkov-Gaussian-necessity}
	Let $\cL \from \dom{\cL} \subseteq L^2(\bbT; U) \to L^2(\bbT; U)$ be a
	boundedly invertible linear
	operator,
	and suppose that $X = (X(t))_{t\in\bbT}$ is a mean-square continuous Gaussian $U$-valued process
	colored by $\cL^{-1}$.
	Let $F$ be a dense subset of $U$ for which
	$C_c^\infty(\bbT; F) \subseteq \dom{\cL}$.
	Furthermore, suppose that
	$C_c^\infty(\bbT; F)$ and its image under $\cL$ are dense
	subsets of
	$L^2(\bbT; U)$.
	
	If $X$ has the weak Markov property from Definition~\ref{def:weak-markov} with respect
	to its natural filtration $(\cF_t^X)_{t\in\bbT}$, then
	\begin{equation}\label{eq:precision-operator-local}
		\forall J \in \mathscr I : \quad 
		\langle \cL \phi, \cL \psi \rangle_{L^2(\bbT; U)} = 0,
		\quad 
		\forall
		\phi \in C_c^\infty(J; F), \psi \in C_c^\infty(\bbT \setminus \clos J; F),
	\end{equation}
	where $\mathscr I$ denotes the set of 
	all open intervals $J \subseteq \bbT$.
\end{theorem}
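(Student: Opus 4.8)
The plan is to recast the weak Markov hypothesis as a purely geometric (orthogonality) statement in the Gaussian Hilbert space generated by $X$, and then to exhibit the energy $\scalar{\cL\phi,\cL\psi}{L^2(\bbT;U)}$ as the $L^2(\Omega)$ inner product of two Gaussian variables that are separated by a splitting point. First I would fix the geometric setup. Let $\cG_X := \clspan\{\scalar{X(u),h}{U} : u \in \bbT,\, h \in U\} \subseteq L^2(\Omega)$ be the Gaussian Hilbert space of $X$. The coloring assumption~\eqref{eq:X-colored-by-Linverse} together with bilinearity fixes all covariances, $\E[\scalar{X,\phi}{L^2(\bbT;U)}\scalar{X,\eta}{L^2(\bbT;U)}] = \scalar{\cL^{-*}\phi,\cL^{-*}\eta}{L^2(\bbT;U)}$, so that $\cL^{-*}\phi \mapsto \scalar{X,\phi}{L^2(\bbT;U)}$ extends to a surjective isometry $\Theta \from L^2(\bbT;U) \to \cG_X$; here surjectivity uses that $C_c^\infty(\bbT;F)$ is dense in $L^2(\bbT;U)$ and that $\cL^{-*}$ is boundedly invertible. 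Setting $D(\chi) := \Theta(\cL\chi)$ for $\chi \in C_c^\infty(\bbT;F)$, the isometry yields the two facts I will use: the \emph{biorthogonality} $\E[D(\chi)\,\scalar{X,\eta}{L^2(\bbT;U)}] = \scalar{\cL\chi,\cL^{-*}\eta}{L^2(\bbT;U)} = \scalar{\chi,\eta}{L^2(\bbT;U)}$ and the \emph{energy identity} $\scalar{\cL\phi,\cL\psi}{L^2(\bbT;U)} = \E[D(\phi)D(\psi)]$. Thus it suffices to prove $D(\phi)\perp D(\psi)$ in $L^2(\Omega)$. For $c \in \bbT$ I introduce $\cP_c := \clspan\{\scalar{X(u),h}{U}: u \le c\}$, $\cF_c := \clspan\{\scalar{X(u),h}{U}: u \ge c\}$ and $\partial_c := \bigcap_{\varepsilon>0}\cH_c^\varepsilon$ with $\cH_c^\varepsilon := \clspan\{\scalar{X(u),h}{U}: |u-c|<\varepsilon\}$; mean-square continuity of $X$ shows these coincide with the analogous spaces built from smeared observations $\scalar{X,\eta}{L^2(\bbT;U)}$ with $\eta$ supported in the respective set, and that $\clspan(\cP_c + \cF_c) = \cG_X$.

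By Proposition~\ref{prop:Markov-relations} the weak Markov property is equivalent to the $\sigma$-Markov property, i.e.\ $\cF_s^X \ind_{\partial\mathscr A(s)} \sigma(X(t):t\ge s)$ for every $s \in \bbT$. Using that for jointly Gaussian families conditional independence is equivalent to orthogonality of the residuals after projecting onto the conditioning subspace, and that the Gaussian germ $\sigma$-algebra $\partial\mathscr A(c)$ is generated by the intersection subspace $\partial_c$ with $\partial_c \subseteq \cP_c \cap \cF_c$, I would record the $\sigma$-Markov property at a point $c$ as the projection identity $P_{\cP_c} P_{\cF_c} = P_{\partial_c} = P_{\cF_c} P_{\cP_c}$, where $P$ denotes orthogonal projection in $\cG_X$. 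From this I extract the key geometric lemma: if $g \in \cG_X = \clspan(\cP_c + \cF_c)$ satisfies $g \perp \cP_c$, then $g \in \cF_c$. Indeed $\|g - P_{\cF_c}g\|^2 = \E[(g - P_{\cF_c}g)\,g]$, and approximating $g$ by elements $p_n + f_n \in \cP_c + \cF_c$, every cross term vanishes because $g \perp \cF_c \ni f_n$, because $g \perp \cP_c \ni p_n$, and because $P_{\cF_c}p_n = P_{\partial_c}p_n \in \partial_c \subseteq \cP_c$.

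To finish, I reduce to separated one-sided supports. Since $\clos J = [a,b]$, I decompose $\psi = \psi_L + \psi_R$ with $\supp\psi_L \subseteq \bbT \cap (-\infty,a)$ and $\supp\psi_R \subseteq \bbT \cap (b,\infty)$, so that $\supp\phi \subseteq (a,b)$ is strictly separated from each piece; by linearity it suffices to treat $\psi_R$, the case of $\psi_L$ being symmetric under time reversal. I choose a splitting point $c$ with $\max\supp\phi < c < \min\supp\psi_R$. The biorthogonality then gives $D(\phi)\perp\cF_c$ (as $\supp\phi$ avoids $[c,\infty)$) and $D(\psi_R)\perp\cP_c$ (as $\supp\psi_R$ avoids $(-\infty,c]$). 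Applying the geometric lemma to $g = D(\psi_R)$ yields $D(\psi_R)\in\cF_c$, whence $\E[D(\phi)D(\psi_R)]=0$ because $D(\phi)\perp\cF_c$; together with the symmetric argument for $\psi_L$ and the energy identity this establishes $\scalar{\cL\phi,\cL\psi}{L^2(\bbT;U)}=0$, as claimed.

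I expect the main obstacle to lie not in this geometric endgame but in the measure-theoretic bookkeeping underlying the projection identity: proving that the Gaussian germ $\sigma$-algebra is indeed generated by the intersection subspace $\partial_c$ and that $\partial_c\subseteq\cP_c\cap\cF_c$, and carefully passing between the pointwise observations $\scalar{X(u),h}{U}$ that define the natural filtration and the smeared observations $\scalar{X,\eta}{L^2(\bbT;U)}$ that interface with $\cL^{-*}$ through $\Theta$. These are exactly the points at which mean-square continuity of $X$ and the density hypotheses on $C_c^\infty(\bbT;F)$ and its image under $\cL$ must be used, and they constitute the nontrivial extension of Rozanov's real-valued arguments to the $U$-valued setting.
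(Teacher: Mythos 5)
Your overall architecture is the same as the paper's: your isometry $\Theta$ and dual variables $D(\chi)$ are the paper's isonormal process $\mathscr W$ and dual field $\langle X^*,\chi\rangle = \mathscr W(\cL\chi)$ from~\eqref{eq:dual-field}, your biorthogonality and energy identities are~\eqref{eq:dual-process}, and your support-separation endgame is the paper's deduction of~\eqref{eq:precision-operator-local} from the orthogonality~\eqref{eq:condition-2-weak-Markov-GRF} together with the inclusion~\eqref{eq:easier-inclusion-duality}. The genuine gap lies in the step where you convert the $\sigma$-Markov property into the projection identity $P_{\cP_c}P_{\cF_c}=P_{\partial_c}=P_{\cF_c}P_{\cP_c}$. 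That identity requires the inclusion $\partial_c\subseteq\cP_c\cap\cF_c$, which you defer as ``measure-theoretic bookkeeping''; with your definitions ($\cP_c$, $\cF_c$ spanned by observations on the exact half-lines, $\partial_c$ the intersection of the two-sided neighborhood spaces) this inclusion is not bookkeeping but is false in general. Take $U=\R$, $\xi\sim N(0,1)$ and $X(t):=\xi\,(t\vee 0)^{1/2}$: this process is Gaussian, mean-square continuous and weakly Markov in the sense of Definition~\ref{def:weak-markov} (every relevant conditional independence holds trivially), yet $\partial_0=\operatorname{span}\{\xi\}$ while $\cP_0=\{0\}$, so $\partial_0\not\subseteq\cP_0$ and $P_{\cP_0}P_{\cF_0}=0\neq P_{\partial_0}$. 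Thus for Gaussian processes the $\sigma$-Markov property (equivalently, orthogonality of the residuals $(\id-P_{\partial_c})\cP_c\perp(\id-P_{\partial_c})\cF_c$, which does hold in this example) does \emph{not} imply your projection identity, and your geometric lemma, whose proof uses $P_{\cF_c}p_n=P_{\partial_c}p_n\in\partial_c\subseteq\cP_c$, collapses at exactly this point. The example is degenerate and violates the coloring hypothesis, but your derivation of the projection identity nowhere uses the coloring, so it cannot be correct as stated; establishing the germ inclusion under the theorem's hypotheses is a substantive open step of your argument, not a routine one.

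The repair is to replace the exact one-sided spaces by Rozanov-type enlarged spaces, $\cP_c^{+}:=\bigcap_{\varepsilon>0}\clspan\{\scalar{X(u),h}{U}: u<c+\varepsilon,\, h\in U\}$ and $\cF_c^{-}:=\bigcap_{\varepsilon>0}\clspan\{\scalar{X(u),h}{U}: u>c-\varepsilon,\, h \in U\}$, which are the one-dimensional instances of the spaces $\mathfrak H_+$ in~\eqref{eq:mathfrakHplus}. For these the germ inclusion $\partial_c\subseteq\cP_c^{+}\cap\cF_c^{-}$ is automatic; the weak Markov property (which conditions on $\mathscr A_\varepsilon(c)$ for \emph{all} small $\varepsilon$, so that after enlarging the outer $\sigma$-algebras via Lemma~\ref{lem:cond-ind-props}\ref{lem:cond-ind-props:a} one may let $\varepsilon\downarrow 0$ and use that $P_{\cH_c^\varepsilon}\to P_{\partial_c}$ strongly) then yields $P_{\cF_c^{-}}p=P_{\partial_c}p$ for all $p\in\cP_c^{+}$, and your geometric lemma and endgame go through verbatim with $\cP_c^{+},\cF_c^{-}$ in place of $\cP_c,\cF_c$, since the strict separation of $\supp\phi$ and $\supp\psi_R$ leaves room for the $\varepsilon$-enlargements. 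This corrected statement is precisely the content of Rozanov's Markov condition~\eqref{eq:condition-2-weak-Markov-GRF}, which the paper invokes rather than reproves; the two remaining Gaussian facts you defer (that the germ $\sigma$-algebra is generated by the germ subspace, and that conditional independence of jointly Gaussian families is equivalent to residual orthogonality) are true and are likewise part of the cited theory.
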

\begin{proof}
	For all $J \in \mathscr I$ we 
	define a closed subspace $\mathfrak H(J)$ of $L^2(\Omega)$ by
	\begin{equation}\label{eq:def-HJ}
		\mathfrak H(J) 
		:=
		\clos{\{\langle X, \phi \rangle_{L^2(\bbT; U)} : \phi \in C_c^\infty(J; F) \}}^{L^2(\Omega)}.
	\end{equation}
	Then the family $(\mathfrak H(J))_{J \in \mathscr I}$ is 
	a \emph{Gaussian random field} in the sense of~\cite[Chapter~2, Section~3.1]{Rozanov1982},
	and we can connect it to the present setting by showing that
	\begin{equation}
		\sigma(X(t) : t \in J)
		=
		\sigma(\mathfrak H(J)).
	\end{equation}
	Indeed, we have $\sigma(\mathfrak H(J)) \subseteq \sigma(X(t) : t \in J)$
	since $\langle X, \phi \rangle_{L^2(\bbT; U)}$ is 
	measurable with respect to the latter $\sigma$-algebra for all $\phi \in C_c^\infty(J; F)$ with $\supp \phi \subseteq J$.
	In order to establish the converse inclusion, it suffices to verify the claim that $X(t)$ is $\sigma(\mathfrak H(J))$-measurable 
	for each $t \in J$. 
	Let $(e_j)_{j\in\N}$ be an orthonormal basis of $U$ 
	and write
	\begin{equation}\label{eq:series-expansion}
		X(t) = \sum_{j = 1}^\infty \scalar{X(t), e_j}{U} \, e_j
		\quad 
		\text{in } L^2(\Omega; U).
	\end{equation}
	Now we will show that $\scalar{X(t), e_j}{U}$ is $\sigma(\mathfrak H(J))$-measurable for every $j \in \N$.
	In fact, by the density of $F \subseteq U$ it suffices to consider $\langle X(t), x \rangle_U$ for $x \in F$.
	Let $(\phi_n)_{n\in \N} \subseteq C_c^\infty(J)$ be a sequence of 
	bump functions concentrating around $t$,
	i.e., we have $\lim_{n\to\infty} \int_{\bbT} f(s) \phi_n(s) \rd s = f(t)$ in $E$
	for any $f \in C(\bbT; E)$, where $E$ is an arbitrary Banach space.
	It follows from the mean-square continuity of $X$ that
	$f := \scalar{X(\,\cdot\,), x}{U} \in C(\bbT; L^2(\Omega))$, thus with $E := L^2(\Omega)$ we obtain
	\[
	\scalar{X(t), x}{U} 
	=
	\lim_{n \to \infty} \int_\bbT \scalar{X(s), x}{U} \phi_n(s) \rd s
	=
	\lim_{n \to \infty} \int_\bbT \scalar{X(s), \phi_n(s) x}{U} \rd s
	\quad 
	\text{in } L^2(\Omega).
	\]
	Passing to a $\bbP$-a.s.-convergent subsequence in the rightmost expression,
	we find that $\scalar{X(t), x}{U}$ is a limit of $\sigma(\mathfrak H(J))$-measurable 
	random variables. Thus, each summand in~\eqref{eq:series-expansion} is 
	$\sigma(\mathfrak H(J))$-measurable, and passing to a $\bbP$-a.s.-convergent subsequence of
	$(\sum_{j = 1}^N \scalar{X(t), e_j}{U} \, e_j)_{N \in \N}$ proves the claim.
	
	The theory of~\cite[Chapter~2, Section~3.1]{Rozanov1982}, 
	now implies that $X$ has the weak Markov property from Definition~\ref{def:weak-markov} if and only
	if $(\mathfrak H(J))_{J \in \mathscr I}$ is Markov in the sense of~\cite[p.~97]{Rozanov1982}.
	For a general $B \subseteq \bbT$, we define
	\begin{equation}\label{eq:mathfrakHplus}
		\mathfrak H_+(B) := \bigcap\nolimits_{\varepsilon>0} \mathfrak H(B^\varepsilon),
	\end{equation}
	where 
	$B^\varepsilon := \{t \in \bbT : \operatorname{dist}(t, B) < \varepsilon \}$ 
	denotes an open $\varepsilon$-neighborhood of $B$.
	Using the definition~\eqref{eq:mathfrakHplus} for $B \in \{\partial J, J, \bbT\setminus J\}$,
	the Markov property for $(\mathfrak H(J))_{J \in \mathscr I}$ 
	implies that~\cite[Equations~(3.14), p.~97]{Rozanov1982}
	are satisfied for every $J \in \mathscr I$:
	\begin{equation}
		\mathfrak H_+(\partial J) = \mathfrak H_+(J) \cap \mathfrak H_+(\bbT \setminus J)
		\quad \text{and} \quad 
		\mathfrak H_+(J)^\perp \perp \mathfrak H_+(\bbT \setminus J)^\perp, \label{eq:condition-2-weak-Markov-GRF}
	\end{equation}
	where we take $L^2(\Omega)$-orthogonal complements in $\mathfrak H(\bbT)$.
	
	Next we define
	$\langle X^*, \,\cdot\, \rangle \from C_c^\infty(\bbT; F) \to L^2(\Omega)$
	by 
	\begin{equation}\label{eq:dual-field}
		\langle X^*, \phi \rangle := \mathscr W(\cL \phi), 
		\quad \bbP\text{-a.s.},
		\quad \phi \in C_c^\infty(\bbT; F),
	\end{equation}
	to which we associate the spaces
	\begin{equation}\label{eq:def-HstarJ}
		\mathfrak H^*(J)
		:= 
		\clos{\{ \langle X^*, \phi \rangle : \phi \in C_c^\infty(J; F) \}}^{L^2(\Omega)}, 
		\quad 
		J \in \mathscr I.
	\end{equation}
	Then $(\langle X^*, \phi \rangle)_{\phi \in C_c^\infty(\bbT; F)}$ is dual to 
	$(\langle X, \psi \rangle_{L^2(\bbT;U)})_{\psi \in C_c^\infty(\bbT; F)}$
	in the sense that
	\begin{equation}\label{eq:dual-process}
		\begin{aligned}
			\E[\langle X, \phi \rangle_{L^2(\bbT; U)} \langle X^*, \psi \rangle]
			=
			\E[\mathscr W(\cL^{-*} \phi) \mathscr W(\cL \psi)]
			=
			\langle \phi , \psi \rangle_{L^2(\bbT; U)}
		\end{aligned}
	\end{equation}
	for $\phi, \psi \in C_c^\infty(\bbT; F)$.
	Next we will prove
	\begin{equation}
		\mathfrak H(\bbT) = \mathfrak H^*(\bbT). \label{eq:duality-condition-1}
	\end{equation}
	by showing that both of these sets are equal to
	\[
	\mathscr Z := \clos{\{\mathscr W(f) : f \in L^2(\bbT; U)\}}^{L^2(\Omega)}.
	\]
	First, we note that $\mathfrak H(\bbT)$ and $\mathfrak H^*(\bbT)$
	are clearly contained in $\mathscr Z$.
	Now let $Z \in \mathscr Z$ and $\varepsilon > 0$ be arbitrary, and choose
	$f \in L^2(\bbT; U)$ such that $\norm{\mathscr W(f) - Z}{L^2(\Omega)} < \frac{1}{3}\varepsilon$.
	Since the image of $C_c^\infty(\bbT; F)$ under $\cL$ is assumed to be dense in $L^2(\bbT; U)$, 
	we may furthermore choose $\phi \in C_c^\infty(\bbT; F)$ such that 
	$\norm{\cL \phi - f}{L^2(\Omega; U)} < \frac{2}{3}\varepsilon$.
	It follows that
	\[
	\begin{aligned}
		\norm{Z - \langle X^*, \phi\rangle }{L^2(\Omega)}
		&\le 
		\norm{Z - \mathscr W(f)}{L^2(\Omega)}
		+
		\norm{\mathscr W(\cL\phi - f)}{L^2(\Omega)}
		\\&=
		\norm{Z - \mathscr W(f)}{L^2(\Omega)}
		+
		\norm{\cL\phi - f}{L^2(\bbT; U)} < \varepsilon,
	\end{aligned}
	\]
	which shows $Z \in \mathfrak H^*(\bbT)$ since $\varepsilon > 0$ was arbitrary.
	On the other hand, 
	since $\cL$ is densely defined and has a bounded inverse, it is in particular closed,
	hence $\cL^*$ exists and is also densely defined by~\cite[Proposition~10.22]{Neerven2022}. 
	It thus follows that
	$\range{\cL^{-*}} = \range{(\cL^*)^{-1}} = \dom{\cL^*}$ is dense in $L^2(\bbT; U)$,
	so that there exists $g \in L^2(\bbT; U)$ satisfying
	$\norm{f - \cL^{-*}g}{L^2(\bbT; U)} < \frac{1}{3}\varepsilon$. Finally,
	we choose $\psi \in C_c^\infty(\bbT; F)$ such that
	$\norm{\psi - g}{L^2(\bbT; U)} < \norm{\cL^{-*}}{\LO(L^2(\bbT;U))}^{-1} \frac{1}{3} \varepsilon$ so that
	\[
	\begin{aligned}
		\norm{Z - \scalar{X, \psi}{L^2(\bbT;U)}}{L^2(\Omega)}
		&<
		\tfrac{1}{3}\varepsilon
		+
		\norm{\mathscr W(f - \cL^{-*}g)}{L^2(\Omega)}
		+
		\norm{\mathscr W(\cL^{-*}g - \cL^{-*} \psi)}{L^2(\Omega)}
		\\&=
		\tfrac{1}{3}\varepsilon
		+
		\norm{f - \cL^{-*}g}{L^2(\bbT;U)}
		+
		\norm{\cL^{-*}(g - \psi)}{L^2(\bbT;U)} < \varepsilon,
	\end{aligned}
	\]
	hence also $Z \in \mathfrak H(\bbT)$. 
	We conclude that~\eqref{eq:duality-condition-1} holds.
	
	The necessity of~\eqref{eq:precision-operator-local} for the weak Markov property
	of $X$ will follow from
	\begin{equation}\label{eq:easier-inclusion-duality}
		\mathfrak H^*(J) \subseteq \mathfrak H_+(\bbT \setminus J)^\perp, \quad \forall J \in \mathscr I.
	\end{equation}
	Indeed, if $X$ is weakly Markov, then~\eqref{eq:easier-inclusion-duality}
	would imply that the random variables
	\[
	\begin{aligned}
		\xi := \langle X^*, \phi \rangle &\in \mathfrak H^*(J) \subseteq \mathfrak H_+(\bbT\setminus J)^\perp,
		\\
		\eta := \langle X^*, \psi \rangle &\in \mathfrak H^*(\bbT \setminus J) \subseteq \mathfrak H_+(J)^\perp,
	\end{aligned}
	\]
	where $\phi \in C_c^\infty(J; F)$ and $\psi \in C_c^\infty(\bbT\setminus J; F)$,
	are orthogonal by~\eqref{eq:condition-2-weak-Markov-GRF}.
	Therefore,
	\[
	0 = \scalar{\xi, \eta}{L^2(\Omega)}
	=
	\E[\langle X^*, \phi \rangle \langle X^*, \psi \rangle]
	=
	\scalar{\cL \phi, \cL \psi }{L^2(\bbT; U)},
	\]
	which shows~\eqref{eq:precision-operator-local}.
	Note that by definition~\eqref{eq:def-HstarJ} and density, the orthogonality extends to all $\xi \in \mathfrak H^*(J)$
	and $\eta \in \mathfrak H^*(\bbT\setminus J)$.
	
	In order to verify~\eqref{eq:easier-inclusion-duality}, we again take $\xi = \langle X^*, \phi \rangle$
	with $\phi$ as above.
	By the compact support of the latter, there exists $\varepsilon > 0$ such that
	$\scalar{\phi, \psi}{L^2(\bbT; U)} = 0$ for all $\psi \in C_c^\infty((\bbT \setminus J)^\varepsilon; F)$.
	Hence, for $\eta = \scalar{X, \psi}{L^2(\bbT; U)} \in \mathfrak H((\bbT\setminus J)^\varepsilon)$ we have
	$\xi \perp \eta$
	by~\eqref{eq:dual-process}, from which we can deduce $\xi \perp \mathfrak H_+(\bbT \setminus J)$,
	and thus~\eqref{eq:easier-inclusion-duality}.
\end{proof}

In order to state and prove sufficient conditions for weak Markovianity of $X$
in terms of the locality of its precision operator, 
we first need to collect some definitions which are based on objects encountered
in the proof of Theorem~\ref{thm:weakMarkov-Gaussian-necessity}.
Namely, we will define spaces $(\cH(J))_{J \in \mathscr I}$ 
such that $\cH(\bbT)$ is
unitarily isomorphic to $\mathfrak H(\bbT)$
and there exists a dense injection $\iota \from C_c^\infty(\bbT; F) \to \cH(\bbT)$.

Associating,
to each $\eta \in \mathfrak H(\bbT)$,
a mapping $I^{-1} \eta \from C_c^\infty(\bbT; F) \to \bbR$ given by
\begin{equation}\label{eq:I-inverse}
	I^{-1} \eta (\phi) := \E[\scalar{X, \phi}{L^2(\bbT; U)} \eta],
	\quad
	\phi \in C_c^\infty(\bbT; F),
\end{equation}
sets up a linear map $I^{-1} \from \mathfrak H(\bbT) \to \cH(\bbT)$, 
where $\cH(\bbT)$ is defined as the range of $I^{-1}$.
It is also
injective since $I^{-1} \eta(\phi) = 0$ for all $\phi \in C_c^\infty(\bbT; F)$
means $\eta \perp C_c^\infty(\bbT; F)$ in $\mathfrak H(\bbT)$,
and thus $\eta = 0$ by~\eqref{eq:def-HJ}.
Equipping $\cH(\bbT)$ with the inner product 
\[
\scalar{v_1, v_2}{\cH(\bbT)}
:=
\scalar{I v_1, I v_2}{L^2(\Omega)},
\quad 
v_1, v_2 \in \mathfrak H(\bbT),
\]
renders
$I \from \cH(\bbT) \to \mathfrak H(\bbT)$
a unitary isomorphism.
For $J \in \mathscr I$ we can then define
\begin{equation}\label{eq:def-cHJ}
	\cH(J)
	:=
	\bigvee\nolimits_{\! \! \varepsilon>0}\,
	\{
	v \in \cH(\bbT) : v(\phi) = 0 \text{ for all } \phi \in C_c^\infty((\bbT\setminus J)^{\varepsilon}; F)
	\},
\end{equation}
where $\bigvee$ denotes the closed linear span.

A dense injection $\iota \from C_c^\infty(\bbT; F) \to \cH(\bbT)$  is obtained
by defining ${\iota v \from C_c^\infty(\bbT; F) \to \R}$ in the following way,
for any $v \in C_c^\infty(\bbT; F)$:
\[
\iota v(\phi)
:=
\scalar{v, \phi}{L^2(\bbT; U)},
\quad 
\phi \in C_c^\infty(\bbT; F).
\]
Indeed, we find $\iota v \in \cH(\bbT)$ 
since the duality relations~\eqref{eq:dual-process} and~\eqref{eq:duality-condition-1}
between $X$ and $X^*$
imply that
$\langle X^*, v \rangle \in \mathfrak H^*(\bbT) = \mathfrak H(\bbT)$
satisfies
\[
\iota v(\phi)
=
\E[\scalar{X, \phi}{L^2(\bbT; U)} \langle X^*, v \rangle]
=
I^{-1} \langle X^*, v \rangle(\phi),
\quad 
\forall \phi \in C_c^\infty(\bbT; F).
\]
Moreover, the injectivity follows in the same way as for $I^{-1}$.
To show density of the range, fix an arbitrary $v \in \cH(\bbT)$.
Then ${I v \in \mathfrak H(\bbT) = \mathfrak H^*(\bbT)}$ and thus
there exists a sequence
$(\psi_n)_{n\in\N} \subseteq C_c^\infty(\bbT; F)$
such that $\langle X^*, \psi_n \rangle \to Iv$ in $L^2(\Omega)$.
Consequently,
we have $\iota \psi_n = I^{-1} \langle X^*, \psi_n \rangle \to v$ in $\cH(\bbT)$.

\begin{remark}
	For centered, real-valued Gaussian random fields $(Z(x))_{x \in \cX}$
	which are indexed by a compact metric space $(\cX, d_{\cX})$ and moreover mean-square continuous,
	a unitary isomorphism can be established between the $L^2(\Omega;\bbR)$-closure
	of all linear combinations of point evaluations
	and the dual of the Cameron--Martin space for its associated Gaussian measure on 
	the space $L^2(\cX; \bbR)$, see~\cite[Lemma~4.1(iii)]{KB2022}.
	We point out its analogy to
	the unitary isomorphism
	$I \from \cH(\bbT) \to \mathfrak H(\bbT)$ defined above.
\end{remark}

\begin{theorem}\label{thm:weakMarkov-Gaussian-sufficiency}
	Let the linear operator $\cL \from \dom{\cL} \subseteq L^2(\bbT; U) \to L^2(\bbT; U)$,
	the $U$-valued process $X = (X(t))_{t\in\bbT}$ and the subset $F \subseteq U$
	be as in Theorem~\ref{thm:weakMarkov-Gaussian-necessity}.
	Recall that $\cH(\bbT)$ is the range of the 
	linear mapping $I^{-1}$ defined by~\eqref{eq:I-inverse}
	and $\cH(J)$ is given by~\eqref{eq:def-cHJ} for all $J \in \mathscr I$.
	If 
	\begin{equation}\label{eq:additional-condition}
		\cH(J) = \clos{\iota C_c^\infty(J; F)}^{\cH(\bbT)}, \quad \forall J \in \mathscr I,
	\end{equation}
	then~\eqref{eq:precision-operator-local} implies that $X$ has the weak Markov property
	from Definition~\ref{def:weak-markov}.
\end{theorem}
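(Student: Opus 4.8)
The plan is to show that the two conditions in~\eqref{eq:condition-2-weak-Markov-GRF}, which by the theory of~\cite[Chapter~2, Section~3.1]{Rozanov1982} are equivalent to the Markov property of the field $(\mathfrak H(J))_{J\in\mathscr I}$ and hence to the weak Markov property of $X$, follow from the locality hypothesis~\eqref{eq:precision-operator-local} once~\eqref{eq:additional-condition} is available. The guiding idea is that~\eqref{eq:additional-condition} upgrades the one-sided inclusion~\eqref{eq:easier-inclusion-duality} from the proof of Theorem~\ref{thm:weakMarkov-Gaussian-necessity} to the \emph{full duality}
\begin{equation*}
	\mathfrak H^*(J) = \mathfrak H_+(\bbT \setminus J)^\perp, \qquad \forall J \in \mathscr I,
\end{equation*}
after which~\eqref{eq:precision-operator-local} translates directly into the orthogonality half of~\eqref{eq:condition-2-weak-Markov-GRF}.

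First I would carry out three translation steps. Writing the condition $v(\phi)=0$ from~\eqref{eq:I-inverse} as the orthogonality $Iv \perp \scalar{X,\phi}{L^2(\bbT;U)}$ in $L^2(\Omega)$, the set appearing in~\eqref{eq:def-cHJ} corresponds under $I$ to $\mathfrak H((\bbT\setminus J)^\varepsilon)^\perp$; using the Hilbert-space identity $\clos{\bigcup_{\varepsilon>0} V_\varepsilon^\perp} = (\bigcap_{\varepsilon>0} V_\varepsilon)^\perp$ with $V_\varepsilon := \mathfrak H((\bbT\setminus J)^\varepsilon)$, and $\mathfrak H_+(\bbT\setminus J) = \bigcap_{\varepsilon>0}\mathfrak H((\bbT\setminus J)^\varepsilon)$, this yields $I\cH(J) = \mathfrak H_+(\bbT\setminus J)^\perp$ for every open $J$. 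Since the identification $I\iota\phi = \langle X^*,\phi\rangle$ established before the theorem gives $I\,\clos{\iota C_c^\infty(J;F)}^{\cH(\bbT)} = \mathfrak H^*(J)$, the hypothesis~\eqref{eq:additional-condition} becomes exactly the full duality above. Finally, combining the definition~\eqref{eq:dual-field} of $X^*$ with the isonormality~\eqref{eq:isonormal} shows that~\eqref{eq:precision-operator-local} is equivalent to $\mathfrak H^*(J) \perp \mathfrak H^*(\bbT\setminus\clos J)$ for all $J\in\mathscr I$.

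With these translations in hand, I would verify~\eqref{eq:condition-2-weak-Markov-GRF}. The elementary observation that $\operatorname{dist}(t,J)=\operatorname{dist}(t,\clos J)$ gives $J^\varepsilon = \clos J^{\,\varepsilon}$ and hence $\mathfrak H_+(J)=\mathfrak H_+(\clos J)$; applying the duality to the open set $\bbT\setminus\clos J$ then yields $\mathfrak H_+(J)^\perp = \mathfrak H^*(\bbT\setminus\clos J)$, while the duality for $J$ gives $\mathfrak H_+(\bbT\setminus J)^\perp = \mathfrak H^*(J)$. The orthogonality condition $\mathfrak H_+(J)^\perp \perp \mathfrak H_+(\bbT\setminus J)^\perp$ thus reads $\mathfrak H^*(\bbT\setminus\clos J) \perp \mathfrak H^*(J)$, which is precisely the reformulation of~\eqref{eq:precision-operator-local}. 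For the splitting identity $\mathfrak H_+(\partial J) = \mathfrak H_+(J)\cap\mathfrak H_+(\bbT\setminus J)$, I would pass to orthogonal complements, use $(A\cap B)^\perp = A^\perp \vee B^\perp$, and exploit the disjoint decomposition $\bbT\setminus\partial J = J \sqcup (\bbT\setminus\clos J)$ together with the additivity $\mathfrak H^*(J\sqcup(\bbT\setminus\clos J)) = \mathfrak H^*(J)\vee\mathfrak H^*(\bbT\setminus\clos J)$ and the duality applied to $\bbT\setminus\partial J$, reducing the identity to the spanning relation for the dual field.

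The main obstacle is the last point: extending the duality $\mathfrak H^*(O)=\mathfrak H_+(\bbT\setminus O)^\perp$ from single intervals, where~\eqref{eq:additional-condition} is assumed, to the open sets $\bbT\setminus\clos J$ and $\bbT\setminus\partial J$, which are unions of up to two, respectively three, intervals. The additivity $\mathfrak H^*(O_1\sqcup O_2)=\mathfrak H^*(O_1)\vee\mathfrak H^*(O_2)$ is immediate from $C_c^\infty(O_1\sqcup O_2;F)=C_c^\infty(O_1;F)\oplus C_c^\infty(O_2;F)$, but the matching intersection property $\mathfrak H_+(\bbT\setminus O_1)\cap\mathfrak H_+(\bbT\setminus O_2)=\mathfrak H_+(\bbT\setminus(O_1\sqcup O_2))$ for the primal field is delicate, being a genuine continuity-from-outside property of $(\mathfrak H_+(B))_B$. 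I expect it to follow from the mean-square continuity of $X$ through the approximation of test functions concentrating on the relevant closed sets, paralleling the corresponding step in Rozanov's real-valued argument.
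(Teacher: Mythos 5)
Your first half reproduces the paper's argument exactly: the identification of \eqref{eq:def-cHJ} with $\mathfrak H_+(\bbT\setminus J)^\perp$ via $I$, the observation that \eqref{eq:additional-condition} upgrades the one-sided inclusion \eqref{eq:easier-inclusion-duality} to the full duality $\mathfrak H^*(J) = \mathfrak H_+(\bbT\setminus J)^\perp$ (this is \eqref{eq:duality-condition-2} in the paper), and the translation of \eqref{eq:precision-operator-local} into orthogonality of the dual field $(\mathfrak H^*(J))_{J\in\mathscr I}$ all appear in the paper's proof. Where you diverge is the last step, and that is where the gap lies. The paper never attempts to verify the two conditions \eqref{eq:condition-2-weak-Markov-GRF} directly: having established that $(\mathfrak H(J))_{J\in\mathscr I}$ and $(\mathfrak H^*(J))_{J\in\mathscr I}$ are dual in the sense of \cite[Chapter~2, Section~3.5]{Rozanov1982} (i.e., \eqref{eq:duality-condition-1} together with \eqref{eq:duality-condition-2}), it invokes the theorem on \cite[p.~100]{Rozanov1982}, which says that for dual fields the primal field is Markov if and only if the dual field is orthogonal, and is then done.

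Your route instead needs the duality identity $\mathfrak H^*(O) = \mathfrak H_+(\bbT\setminus O)^\perp$ for the open sets $O = \bbT\setminus\clos J$ and $O = \bbT\setminus\partial J$, which are disjoint unions of up to two and three intervals, whereas the hypothesis \eqref{eq:additional-condition} is assumed only for single intervals $J\in\mathscr I$. As you observe, additivity on the $\mathfrak H^*$-side is harmless, but the matching statement on the primal side is the intersection property $\mathfrak H_+(B_1\cap B_2) = \mathfrak H_+(B_1)\cap\mathfrak H_+(B_2)$, and this is exactly the kind of identity that is \emph{not} automatic: it does not follow from mean-square continuity of $X$ alone, and in Rozanov's theory such intersection properties of the family $(\mathfrak H_+(B))_B$ are essentially intertwined with the Markov property one is trying to establish (they can fail for non-Markov Gaussian fields). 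So your closing sentence (``I expect it to follow\dots'') is not a technicality to be filled in later; it is the missing core of the argument, and it is precisely the difficulty that the proof of Rozanov's duality theorem absorbs. A secondary issue: you use that \eqref{eq:condition-2-weak-Markov-GRF} is \emph{sufficient} for Markovianity of the field, whereas the paper only records the implication from Markovianity to \eqref{eq:condition-2-weak-Markov-GRF}; that converse does hold in Rozanov's framework, but it too must be cited rather than assumed. The clean repair is the paper's: stop once \eqref{eq:duality-condition-1} and \eqref{eq:duality-condition-2} are in hand and appeal to the duality theorem of \cite[p.~100]{Rozanov1982}.
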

\begin{proof}
	We will show that, under the additional assumption~\eqref{eq:additional-condition},
	\begin{equation}\label{eq:duality-condition-2}
		\mathfrak H^*(J) = \mathfrak H_+(\bbT \setminus J)^\perp, 
		\quad 
		\forall J \in \mathscr I.
	\end{equation}
	Note that inclusion~\eqref{eq:easier-inclusion-duality} also holds without this assumption,
	see the proof of Theorem~\ref{thm:weakMarkov-Gaussian-necessity}.
	Identities~\eqref{eq:duality-condition-1} and~\eqref{eq:duality-condition-2} express that
	$(\mathfrak H(J))_{J \in \mathscr I}$ and $(\mathfrak H^*(J))_{J \in \mathscr I}$ are \emph{dual}
	in the sense of~\cite[Chapter~2, Section~3.5]{Rozanov1982}.
	In this situation, the theorem on~\cite[p.~100]{Rozanov1982} yields that
	$X$ is weakly Markov if and only if $(\mathfrak H^*(J))_{J \in \mathscr I}$
	is \emph{orthogonal}, meaning that
	\begin{equation}\label{eq:dual-field-orthogonal}
		\mathfrak H^*(J) \perp \mathfrak H^*(\bbT \setminus \clos J), \quad \forall J \in \mathscr I,
	\end{equation}
	which is equivalent to~\eqref{eq:precision-operator-local}
	by the respective definitions of $X^*$ and $(\mathfrak H^*(J))_{J \in \mathscr I}$.
	
	To verify~\eqref{eq:duality-condition-2}, note that for each $\varepsilon > 0$,
	\[
	\begin{aligned}
		\mathfrak H((\bbT\setminus J)^\varepsilon)^\perp 
		&=
		\{
		\eta \in \mathfrak H(\bbT) : \scalar{\eta, \xi}{L^2(\Omega)} = 0 \text{ for all } \xi \in \mathfrak H((\bbT\setminus J)^\varepsilon)
		\}
		\\
		&= 
		\{ \eta \in \mathfrak H(\bbT) : U^{-1}\eta(\phi) = 0 \text{ for all } \phi \in C_c^\infty((\bbT\setminus J)^\varepsilon; F) \}
		\\
		&\cong
		\{ v \in \cH(\bbT) : v(\phi) = 0 \text{ for all } \phi \in C_c^\infty((\bbT\setminus J)^\varepsilon; F) \},
	\end{aligned}
	\]
	and it follows that
	\begin{equation}\label{eq:RHKS-isomorphic-to-Hplusthing}
		\cH(J) \cong \mathfrak H_+(\bbT \setminus J)^\perp.
	\end{equation}
	On the other hand, definition~\eqref{eq:def-HstarJ} implies
	\[
	\cH(J) \cong \mathfrak H^*(J),
	\]
	so together we indeed find~\eqref{eq:duality-condition-2}.
\end{proof}
\begin{remark}
	In order for
	locality of the precision operator $\cL^*\cL$ to imply
	the weak Markovianity of $X$,
	one needs to verify
	the additional condition~\eqref{eq:additional-condition}.
	In the real-valued case, 
	two examples of sufficient conditions on $\cL$ for~\eqref{eq:additional-condition}
	to hold are \cite[Lemmas~1 and~2, pp.~108--111]{Rozanov1982},
	which are expressed in terms of boundedness of multipication and translation operators,
	respectively, with respect to the norms $\norm{\cL^{-*}\,\cdot\,}{L^2(\bbT)}$
	and/or $\norm{\cL\,\cdot\,}{L^2(\bbT)}$.
	In~\cite[Chapter~3, Section~3.2]{Rozanov1982}, these results are applied
	to a class of differential operators with sufficiently regular coefficients.
	
	Although it is expected that analogous results can be derived in the $U$-valued setting,
	this subject is out of scope for the processes considered in the remainder of this article,
	since we establish Markovianity using direct methods instead of Theorem~\ref{thm:weakMarkov-Gaussian-sufficiency}, 
	see Subsection~\ref{subsec:fractional-int-diff:Markov}.
	However, we do use Theorem~\ref{thm:weakMarkov-Gaussian-necessity} 
	to show when the process lacks Markov behavior in Subsection~\ref{subsec:not-Markov-fractional}.
\end{remark}

\section{Fractional stochastic abstract Cauchy problem on $\R$}
\label{sec:frac-stoch-ACP-R}
The aim of this section is to define a Hilbert space valued stochastic process $(Z_\gamma(t))_{t\in \R}$
which can be interpreted as a \emph{mild solution} to an equation of the form
\begin{equation}\label{eq:the-SPDE}
	(\partial_t + A)^\gamma X(t) = \dot W^Q(t), \quad t \in \R.
\end{equation}
In Section~\ref{subsec:fractional-int-diff:setting} we specify the setting in which equation~\eqref{eq:the-SPDE} will be considered.
The fractional parabolic differential operator 
$(\partial_t + A)^\gamma$ is defined in Section~\ref{subsec:fractional-int-diff},
whereas
the noise term $\dot W^Q$ in~\eqref{eq:the-SPDE} is the formal
time derivative of the two-sided $Q$-Wiener process defined in Section~\ref{subsec:prelims:stochastic-int-twosided-Wiener}.
In Section~\ref{subsec:mildsol} we combine these two notions to 
give a rigorous definition of the process $(Z_\gamma(t))_{t\in \R}$,
and we indicate its relation to the fractional $Q$-Wiener process defined in Section~\ref{sec:frac-QWiener}.
Lastly, in Section~\ref{subsec:fractional-int-diff:Markov}
we prove that $(Z_\gamma(t))_{t\in \R}$ is $N$-ple Markov if $\gamma = N \in \N$,
but does in general not satisfy the weak Markov property when $\gamma \not\in \N$.

\subsection{Setting}
\label{subsec:fractional-int-diff:setting}
The standing assumption throughout this section on the Hilbert space $U$
and the linear operator $A$ is as follows.

\begin{assumption}\label{ass:standing}
	The linear operator
	$-A \from \mathsf D(A) \subseteq U \to U$
	on the separable real Hilbert space $U$ generates an
	\emph{exponentially stable} $C_0$-semigroup $(S(t))_{t\ge0}$ of bounded
	linear operators on $U$, i.e.,
	\begin{equation}\label{eq:semigroup-est}
		\exists M_0 \in [1,\infty), w \in (0, \infty) : \forall t \in [0,\infty) : \quad
		\norm{S(t)}{\mathscr L(U)} \le M_0 e^{-wt}.
	\end{equation}
\end{assumption}
In addition, we may assume one or both of the following conditions
on the fractional power $\gamma$ and the linear operator $Q$:
\begin{assumption}\label{ass:HAQ}
	$\,$
	\begin{enumerate}[leftmargin=1cm]
		\item \label{ass:HAQ:1}
		There exist $\gamma_0 \in (\nicefrac{1}{2}, \infty)$ and $Q \in \mathscr L^+(U)$ such that
		\[
		\int_0^\infty \| t^{\gamma_0-1} S(t) Q^\frac{1}{2} \|_{\mathscr L_2(U)}^2 \rd t < \infty.
		\]
		\item \label{ass:HAQ:2}
		The $C_0$-semigroup
		$(S(t))_{t\ge0}$ is analytic.
	\end{enumerate}
\end{assumption}

For a general account of the theory of $C_0$-semigroups, the reader
is referred to~\cite{Engel1999, Pazy1983}. 
Note that the results in these works, while stated for
complex Hilbert spaces, can be applied to the real setting by employing \emph{complexifications}
of (linear operators on) $U$, see e.g.\ Subsection~B.2.1 of~\cite[Appendix~B]{KW2022}.
In particular, by Assumption~\ref{ass:HAQ}\ref{ass:HAQ:2} we mean that the complexification of $(S(t))_{t\ge0}$
is analytic in the sense of~\cite[Section~2.5]{Pazy1983}.
We will not emphasize this matter in what follows.

Note that
$(\nicefrac{1}{2}, \infty)$ is the maximal range on which Assumption~\ref{ass:HAQ}\ref{ass:HAQ:1} can hold.
Moreover, if Assumption~\ref{ass:HAQ}\ref{ass:HAQ:1} holds for some $\gamma_0 \in (\nicefrac{1}{2}, \infty)$ then
the same is true for all $\gamma' \in [\gamma_0, \infty)$.
These facts are proven in Subsection~\ref{app:aux:facts-about-assumption} of Appendix~\ref{app:aux}.

Under Assumption~\ref{ass:HAQ}\ref{ass:HAQ:2},
we have the classical derivative 
$\frac{\mathrm d}{\mathrm d t}A^j S(t) = -A^{j+1}S(t)$
from $(0,\infty)$ to $\mathscr L(U)$ for all $j \in \N$;
moreover, by~\cite[Chapter~2, Theorem~6.13(c)]{Pazy1983},
\begin{equation}\label{eq:analytic-semigroup-est}
	\exists M_j \in [1,\infty) \colon \: \forall t \in (0,\infty) : \quad
	\norm{A^j S(t)}{\mathscr L(U)} \le M_j t^{-j} e^{-wt}.
\end{equation}

In what follows, we will simply write $(\cF_t)_{t \in \bbR} := (\cF^{\delta W^Q}_{t})_{t\in\bbR}$
for the increment filtration of $W^Q$ defined by~\eqref{eq:increment-filtration}.

\subsection{Fractional parabolic calculus and the deterministic problem}
\label{subsec:fractional-int-diff}
In this section we first consider the following deterministic counterpart to equation~\eqref{eq:the-SPDE}:
\begin{equation}\label{eq:the-PDE}
	(\partial_t + A)^\gamma u(t) = f(t), \quad t \in \R,
\end{equation}
where $f \in L^2(\R; U)$.
In order to define its mild solution,
we introduce the operations of fractional parabolic integration and differentiation,
generalizing the scalar-valued setting with $A = 0$ which is treated in~\cite[Chapter~2]{KST2006}.

\begin{definition}\label{def:Weyl-fractional-parabolic-integral}
	Let Assumption~\ref{ass:standing} hold.
	Given a function $f \from \R \to U$, we define its \emph{Weyl type fractional parabolic integral} 
	$\mathfrak I^\gamma f \from \R \to U$ of order $\gamma \in (0,\infty)$
	by
	\begin{equation}\label{eq:def-Igamma}
		\mathfrak I^\gamma f(t) 
		:= \frac{1}{\Gamma(\gamma)} \int_{-\infty}^t (t-s)^{\gamma-1} S(t-s) f(s) \rd s
		=
		\frac{1}{\Gamma(\gamma)} \int_{0}^\infty r^{\gamma-1} S(r) f(t-r) \rd r
	\end{equation}
	if these Bochner integrals,
	which are equivalent by the change of variables ${r := t - s}$,
	exist for almost all $t \in \R$.
\end{definition}

Viewing $\mathfrak I^\gamma$ as a linear operator, it turns out that
\[
\forall p \in (1,\infty) : \quad 
\mathfrak I^\gamma \in \LO(L^p(\R;U)) 
\quad \text{with} \quad
\norm{\mathfrak I^\gamma}{\mathscr L(L^p(\R;U))}
\le
\frac{M_0}{w^{\gamma }},
\]
which follows by combining estimate~\eqref{eq:semigroup-est} with Minkowski's integral inequality~\cite[Section~{A.1}]{Stein1970}. 
Setting $\mathfrak I^0 := \id_U$,
the family $(\mathfrak I^\gamma)_{\gamma\ge0}$ is a semigroup of bounded operators
on $L^2(\R; U)$. Indeed,
the semigroup law
\begin{equation}\label{eq:semigroup-law-I}
	\forall \gamma_1, \gamma_2 \in [0,\infty)\colon  \quad 
	\mathfrak I^{\gamma_1 + \gamma_2} = \mathfrak I^{\gamma_1} \mathfrak I^{\gamma_2},
\end{equation}
follows from an argument involving Fubini's theorem and \cite[Equation~(5.12.1)]{Olver2010}.
The adjoint operator $\mathfrak I^{\gamma*}$ of $\mathfrak I^{\gamma}$ satisfies
the following formula for all $t \in \bbR$:
\begin{equation}\label{eq:Igamma-adjoint}
	\mathfrak I^{\gamma*} f(t) 
	= \frac{1}{\Gamma(\gamma)} \int_t^\infty (s-t)^{\gamma-1} [S(s-t)]^* f(s) \rd s.
\end{equation}

Given a linear operator $T \from \mathsf D(T) \subseteq U \to U$
and a measure space $(S, \mathscr A, \mu)$, let us define
$\cT_S \from \mathsf D(\cT_S) \subseteq L^2(S; U) \to L^2(S; U)$ by
\begin{equation}\label{eq:Bochner-counterpart}
	\mathsf D(\cT_S) = L^2(S; \mathsf D(T)) \quad \text{and} \quad
	[\cT_S f](s) := Tf(s), \; \text{a.a. } s\in S, \;
	f \in \mathsf D(\cT_S).
\end{equation}
Using this notion, we have
\begin{equation}
	(\partial_t + \cA_\bbR)f = \partial_t f + \cA_\bbR f, 
	\quad f \in \dom{\partial_t + \cA_\bbR} = H^1(\bbR; U) \cap L^2(\R; \dom{A}),
\end{equation}
where
$\partial_t$ denotes the Bochner--Sobolev weak derivative 
from Subsection~\ref{subsec:app-func-spaces:Bochner-Sobolev} in Appendix~\ref{app:mixed-norm-bochner} with 
$\mathsf D(\partial_t) = H^1(\R; U) \subset L^2(\R; U)$.

Now define the
\emph{Weyl type fractional parabolic derivative} of order $\gamma \in [0,\infty)$ 
as the linear operator
$
\mathfrak D^\gamma := (\partial_t + \cA_\R)^{\lceil \gamma \rceil} \mathfrak I^{\lceil \gamma \rceil - \gamma}
$
on $L^2(\R; U)$ on its natural domain
\[
\dom{\mathfrak D^\gamma}
=
\Bigl\{
f \in L^2(\R; U)
:
\mathfrak I^{\lceil \gamma \rceil - \gamma} f
\in 
\mathsf D \bigl({(\partial_t + \cA_\bbR)^{\lceil \gamma \rceil}}\bigr)
\Bigr\}.
\]
The operator $\mathfrak D^\gamma$
is to be interpreted as $(\partial_t + A)^\gamma$ in~\eqref{eq:the-PDE}.
This raises the question of how this definition relates
to the concept of fractional powers, 
such as the one defined for $C_0$-semigroup generators in Appendix~\ref{app:frac-powers}.
In general, we cannot define fractional powers
of the operator $\partial_t + \cA_\R$ itself since it may fail to be closed,
which would exclude the applicability of both Definition~\ref{def:frac-power-Hille-Phillips}
(since generators of $C_0$-semigroups are closed~\cite[Chapter~1, Corollary~2.5]{Pazy1983}) 
and more general definitions
(see the introduction of~\cite{MartinezCarracedo2001}).
In fact, by the operator sum approach to maximal $L^p$-regularity, see~\cite[Discussion~1.18]{KunstmannWeis2004},
$\partial_t + \cA_\bbR$ being closed is equivalent $A$ having maximal $L^p$-regularity.
In the Hilbert space setting, this is equivalent to $-A$ generating a bounded analytic semigroup
(i.e., Assumptions~\ref{ass:standing} and~\ref{ass:HAQ}\ref{ass:HAQ:2}), see~\cite[Corollary~1.7]{KunstmannWeis2004}.

However, Proposition~\ref{prop:parabolic-operator-semigroup} shows that
the closure $\cB := \clos{\partial_t + \cA_\R}$ exists and does admit
fractional powers $\cB^\gamma$ in the sense of Definition~\ref{def:frac-power-Hille-Phillips}
for all $\gamma \in \R$. In particular, $\cB$ is boundedly invertible and in fact
we have $\cB^{-\gamma} = \mathfrak I^\gamma$ for $\gamma \in [0,\infty)$.
By~\cite[Proposition~3.2.1(b)]{Haase2006}, 
it holds that $\cB^{\gamma_1} \cB^{\gamma_2} \subseteq \cB^{\gamma_1 + \gamma_2}$
for all $\gamma_1, \gamma_2 \in \R$ and $\dom{\cB^{\gamma_1} \cB^{\gamma_2}} = \dom{\cB^{\gamma_2}} \cap \dom{\cB^{\gamma_1 + \gamma_2}}$.
We thus recover~\eqref{eq:semigroup-law-I} and find
\[
\forall \gamma \in [0,\infty) : \quad 
\mathfrak D^\gamma 
=
(\partial_t + \cA_\R)^{\lceil \gamma \rceil} \mathfrak I^{\lceil \gamma \rceil - \gamma}
\subseteq 
\cB^{\lceil \gamma \rceil} \mathfrak I^{\lceil \gamma \rceil - \gamma}
=
\cB^{\lceil \gamma \rceil} \cB^{\gamma - \lceil \gamma \rceil}
=
\cB^\gamma.
\]
It follows that $\mathfrak D^\gamma$ and
$\mathfrak I^\gamma$ are inverses of each other 
in the sense that $\mathfrak D^\gamma \mathfrak I^\gamma f = f$ 
and $\mathfrak I^\gamma \mathfrak D^\gamma f = f$ 
for all $f \in L^2(\R; U)$ such that the respective left-hand sides exist.
Therefore, $u := \mathfrak I^\gamma f$
is the unique solution to the equation $\mathfrak D^\gamma u = f$
whenever it belongs to $\dom{\mathfrak D^\gamma}$ (which may fail if $\partial_t + \cA_\bbR$ is not closed).
Relaxing this assumption, we shall call $u \in C(\R; U)$
a \emph{mild solution} to~\eqref{eq:the-PDE} if it is given by the formula
\begin{equation}
	\forall t \in \R : \quad 
	u(t) = \mathfrak I^\gamma f(t) = \frac{1}{\Gamma(\gamma)} \int_{-\infty}^t (t-s)^{\gamma-1} S(t-s) f(s) \rd s.
\end{equation}
%

\subsection{Mild solution process}
\label{subsec:mildsol}
Combining the spatiotemporal fractional integration theory from Subsection~\ref{subsec:fractional-int-diff}
with the stochastic integral defined in Subsection~\ref{subsec:prelims:stochastic-int-twosided-Wiener},
we can give a rigorous definition for the mild solution to~\eqref{eq:the-SPDE}.
We first introduce the notion of predictability for a stochastic process indexed by $\bbR$.

\begin{definition}\label{def:predictable}
	An $(\cF_t)_{t\in\bbR}$-adapted $U$-valued process $(X(t))_{t\in\bbR}$ is \emph{predictable}
	with respect to $(\cF_t)_{t\in\bbR}$
	if the mapping $(t, \omega) \mapsto X(t,\omega)$
	is strongly measurable with respect to the \emph{predictable $\sigma$-algebra}
	$
	\cP_{\bbR \times \Omega} := \sigma((s, t] \times F_s : s < t, \, F_s \in \cF_s) \vee \sigma(\cN_\bbP).
	$
\end{definition}

\begin{definition}\label{def:mildsol}
	Let Assumption~\ref{ass:standing} be satisfied and
	let $\gamma \in (\nicefrac{1}{2}, \infty)$ be such that 
	Assumption~\ref{ass:HAQ}\ref{ass:HAQ:1} holds with $\gamma_0 = \gamma$.
	Define the process
	$Z_\gamma = (Z_\gamma(t))_{t\in\bbR}$ by
	\begin{equation}\label{eq:def-minusinfty-Zgamma}
		Z_\gamma(t) := \frac{1}{\Gamma(\gamma)}\int_{-\infty}^t (t-s)^{\gamma-1} S(t-s) \rd W^Q(s), \quad t \in \bbR.
	\end{equation}
	A $(\cF_t^{\delta W^Q})_{t\in \R}$-predictable modification of $Z_\gamma$ is said to be a \emph{mild solution}
	to~\eqref{eq:the-SPDE}.
\end{definition}
The first part of the following result 
asserts that $Z_\gamma$ is mean-square continuous.
Since $Z_\gamma$ is adapted to $\bigl(\cF_t^{\delta W^Q}\bigr)_{t\in\bbR}$,
there does indeed exist a modification of $Z_\gamma$ which is predictable with respect
to this filtration, cf.\ \cite[Proposition~3.7(ii)]{DaPrato2014}, the proof of which
can be generalized to unbounded index sets.

\begin{proposition}\label{prop:Zgamma-diffbty}
	Let Assumption~\textup{\ref{ass:standing}} be satisfied.
	Suppose that $t_0 \in [-\infty, \infty)$, $\gamma \in (\nicefrac{1}{2}, \infty)$ 
	are given.
	Define $\bbT := [t_0, \infty)$ if $t_0 \in \R$ or $\bbT := \R$ if $t_0 = -\infty$
	and let
	Assumption~\textup{\ref{ass:HAQ}\ref{ass:HAQ:1}} hold
	for $\gamma_0 = \gamma$. Then the process $(Z_{\gamma}(t \mid t_0))_{t \in \bbT}$ defined by
	\begin{equation}\label{eq:Zgamma-t-t0}
		Z_{\gamma}(t \mid t_0) := \frac{1}{\Gamma(\gamma)}\int_{t_0}^t (t-s)^{\gamma-1} S(t-s) \rd W^Q(s), 
		\quad 
		t \in \bbT,
	\end{equation}
	where $Z_\gamma(\,\cdot\, \mid -\infty) := Z_\gamma$, is mean-square continuous on $\bbT$.
	
	If in addition Assumption~\textup{\ref{ass:HAQ}\ref{ass:HAQ:2}} is satisfied and there exists $N \in \N$ 
	such that Assumptions~\textup{\ref{ass:HAQ}\ref{ass:HAQ:1}} holds
	for $\gamma_0 = \gamma - N$, 
	then $(Z_{\gamma}(t \mid t_0))_{t\in \bbT}$ has $N$ mean square derivatives and
	\begin{equation}\label{eq:recurrence-Zint}
		\forall t \in [t_0, \infty), \, n \in \{0,\dots,N-1\} \colon \: 
		\left(\tfrac{\mathrm d}{\mathrm dt} + A\right)
		\tfrac{\mathrm d^n}{\mathrm dt^n}
		Z_{\gamma}(t \mid t_0) 
		= 
		\tfrac{\mathrm d^n}{\mathrm dt^n}
		Z_{\gamma-1}(t \mid t_0).
	\end{equation}
\end{proposition}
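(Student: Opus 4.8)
Write $K_\mu(r) := \tfrac{1}{\Gamma(\mu)}\, r^{\mu-1} S(r)$ for $r \in (0,\infty)$ and $\mu \in (\nicefrac{1}{2},\infty)$, so that $Z_\mu(t\mid t_0) = \int_{t_0}^t K_\mu(t-s)\,\rd W^Q(s)$ and, by the It\^o isometry~\eqref{eq:ito-R} together with the substitution $r = t-s$, $\norm{Z_\mu(t\mid t_0)}{L^2(\Omega;U)}^2 = \int_0^{t-t_0}\norm{K_\mu(r)Q^{\frac{1}{2}}}{\mathscr L_2(U)}^2\rd r$. For the first assertion I would fix $t$ and, for $t' > t$, decompose $Z_\gamma(t'\mid t_0) - Z_\gamma(t\mid t_0)$ into the increment $\int_t^{t'} K_\gamma(t'-s)\rd W^Q(s)$ and the kernel difference $\int_{t_0}^t [K_\gamma(t'-s) - K_\gamma(t-s)]\rd W^Q(s)$. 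Applying~\eqref{eq:ito-R} to each summand and letting $t' \downarrow t$, the first has squared $L^2(\Omega;U)$-norm $\int_0^{t'-t}\norm{K_\gamma(r)Q^{\frac{1}{2}}}{\mathscr L_2(U)}^2\rd r$, which tends to $0$ since the full integral is finite by Assumption~\ref{ass:HAQ}\ref{ass:HAQ:1} with $\gamma_0 = \gamma$; the second vanishes by dominated convergence using the same integrability. The case $t' < t$ is symmetric, and right-continuity at a finite left endpoint $t_0$ is immediate from the displayed norm identity.

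For the second assertion the algebraic engine is the identity $K_\mu'(r) = K_{\mu-1}(r) - A K_\mu(r)$, which follows from $\tfrac{\mathrm d}{\mathrm d r}S(r) = -AS(r)$ (valid under Assumption~\ref{ass:HAQ}\ref{ass:HAQ:2}) and $\Gamma(\mu) = (\mu-1)\Gamma(\mu-1)$, and which iterates to $K_\gamma^{(n)}(r) = \sum_{j=0}^n \binom{n}{j}(-A)^j K_{\gamma-n+j}(r)$. First I would record, from the analytic estimate~\eqref{eq:analytic-semigroup-est} and the factorization $A^k S(r) = [A^k S(r/2)]\,S(r/2)$, that $\int_0^\infty \norm{r^{\mu-1} A^k S(r) Q^{\frac{1}{2}}}{\mathscr L_2(U)}^2\rd r < \infty$ whenever $\mu - k > \nicefrac{1}{2}$, by Assumption~\ref{ass:HAQ}\ref{ass:HAQ:1} with $\gamma_0 = \mu - k$; consequently $\int_{t_0}^t r^{\mu-1} A^k S(t-s)\rd W^Q(s)$ exists and, by closedness of $A^k$, equals $A^k Z_\mu(t\mid t_0)$, so that $Z_\mu(t\mid t_0)\in\dom{A^k}$ almost surely. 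The plan is then to show, by induction on $n \le N$, that $Z_\gamma$ is $n$ times mean-square differentiable with $\tfrac{\mathrm d^n}{\mathrm d t^n}Z_\gamma(t\mid t_0) = \sum_{j=0}^n \binom{n}{j}(-A)^j Z_{\gamma-n+j}(t\mid t_0)$, every summand being well defined by the previous observation since $(\gamma-n+j)-j = \gamma - n \ge \gamma - N > \nicefrac{1}{2}$.

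The inductive step differentiates $\tfrac{\mathrm d^{n-1}}{\mathrm d t^{n-1}}Z_\gamma(t\mid t_0) = \int_{t_0}^t K_\gamma^{(n-1)}(t-s)\rd W^Q(s)$ by forming the difference quotient at scale $h$, which splits into the boundary increment $\tfrac{1}{h}\int_t^{t+h} K_\gamma^{(n-1)}(t+h-s)\rd W^Q(s)$ and the quotient $\int_{t_0}^t \tfrac{1}{h}[K_\gamma^{(n-1)}(t+h-s)-K_\gamma^{(n-1)}(t-s)]\rd W^Q(s)$; the latter converges via~\eqref{eq:ito-R} and dominated convergence to $\int_{t_0}^t K_\gamma^{(n)}(t-s)\rd W^Q(s)$. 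The hard part will be to make the boundary increment vanish: by~\eqref{eq:ito-R} its squared norm is $\tfrac{1}{h^2}\int_0^h\norm{K_\gamma^{(n-1)}(r)Q^{\frac{1}{2}}}{\mathscr L_2(U)}^2\rd r$, and since the factorization above yields $\norm{K_\gamma^{(n-1)}(r)Q^{\frac{1}{2}}}{\mathscr L_2(U)} \lesssim r^{\gamma-n}\norm{S(r/2)Q^{\frac{1}{2}}}{\mathscr L_2(U)}$ near $r = 0$, the elementary bound $r^{2(\gamma-n)} \le h^2\,r^{2(\gamma-n-1)}$ for $r \le h$ dominates it by $\int_0^h \norm{r^{\gamma-n-1} S(r/2) Q^{\frac{1}{2}}}{\mathscr L_2(U)}^2\rd r \to 0$; this is precisely where the gain of one power of $h$ is spent, and hence where the hypothesis that Assumption~\ref{ass:HAQ}\ref{ass:HAQ:1} holds at the lower value $\gamma_0 = \gamma - N$ (rather than merely $\gamma_0 = \gamma$) is essential. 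Finally, to obtain~\eqref{eq:recurrence-Zint} I would substitute the explicit derivative formula into both sides and collapse the result using Pascal's identity $\binom{n+1}{j} = \binom{n}{j} + \binom{n}{j-1}$; equivalently, one differentiates the base case $(\tfrac{\mathrm d}{\mathrm d t}+A)Z_\gamma = Z_{\gamma-1}$ and commutes $A$ past $\tfrac{\mathrm d^n}{\mathrm d t^n}$, which is licensed by the domain memberships established in the preceding paragraph.
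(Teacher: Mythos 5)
Your proof is correct and takes essentially the same approach as the paper's: both rest on the kernel recurrence $\tfrac{\mathrm d}{\mathrm d r}\bigl[r^{\mu-1}S(r)\bigr] = (\mu-1)r^{\mu-2}S(r) - r^{\mu-1}AS(r)$, integrability of $r^{\mu-1}A^k S(r)Q^{\frac{1}{2}}$ obtained from the semigroup factorization together with~\eqref{eq:analytic-semigroup-est}, closedness of $A$ to pull it out of the stochastic integral, and iteration to a binomial-sum formula for the higher mean-square derivatives (the paper's~\eqref{eq:mean-square-deriv-Zbetaj}), from which~\eqref{eq:recurrence-Zint} follows by collapsing with Pascal's identity. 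The only difference is organizational: the paper verifies that the kernel lies in $H^1_{0,\{0\}}(0,\infty;\mathscr L_2(U))$ and then invokes Lemma~\ref{lem:stoch-conv-Psi-diffb-under-int} for differentiation under the stochastic integral, whereas you carry out that lemma's difference-quotient argument inline, with your boundary-increment estimate playing exactly the role of the zero-trace condition at $r=0$.
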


\begin{proof}
	The mean-square continuity follows directly
	by Lemma~\ref{lem:stoch-conv-Psi-diffb-under-int} in Appendix~\ref{app:aux},
	so we turn to the mean-square differentiability.
	We introduce the
	auxiliary processes
	\[
	Z_{\beta,j}(t) 
	:= 
	\frac{1}{\Gamma(\beta)}\int_{t_0}^t (t-s)^{\beta-1} A^j S(t-s)\rd W^Q(s),
	\quad
	t \in [t_0,\infty),
	\]
	for $j \in \N_0$ and $\beta \in (\nicefrac{1}{2}, \infty)$
	such the right-hand side exists.
	
	We claim
	that, under Assumptions~\textup{\ref{ass:HAQ}\ref{ass:HAQ:1}--\ref{ass:HAQ:2}}
	with
	$\gamma_0 = \gamma - N$,
	the function
	${t \mapsto t^{\beta - 1} A^j S(t)Q^\frac{1}{2}}$
	belongs to 
	$H^1_{0,\{0\}}(0, \infty; \mathscr L_2(U))$
	if
	$\beta - j - \gamma + N \in [1,\infty)$.
	To this end, we first note that the product rule for the (classical) derivative yields
	\[
	\tfrac{\mathrm d}{\mathrm dt}t^{\beta - 1} A^j S(t)Q^\frac{1}{2}
	=
	(\beta - 1) t^{\beta-2} A^j S(t)Q^\frac{1}{2}
	-
	t^{\beta - 1} A^{j+1} S(t)Q^\frac{1}{2}
	\]
	with values in $\LO(U)$ for all $t \in (0,\infty)$.
	Combining~\eqref{eq:analytic-semigroup-est} with an argument involving a change of variables and the semigroup property
	(cf.~the proof of Lemma~\ref{lem:intb-assumption-nestedness} in Appendix~\ref{app:aux}),
	one can show that the $L^2(0,\infty; \mathscr L_2(U))$-norms of
	the two functions on the right-hand side
	can be estimated by that of $t \mapsto t^{\beta-j-2}S(t)Q^\frac{1}{2}$,
	which is finite since $\beta - j - 1 \ge \gamma_0$.
	Again by~\eqref{eq:analytic-semigroup-est}, we have
	\[
	\norm{t^{\beta - 1} A^j S(t)Q^\frac{1}{2}}{\LO(U)} \le M_j t^{\beta - j - 1} \norm{Q^\frac{1}{2}}{\LO(U)} \to 0
	\quad 
	\text{as } t \downarrow 0
	\]
	since $\beta - j - 1 \ge \gamma_0>0$. Noting that $\LO_2(U) \hookrightarrow \LO(U)$ 
	and using~\cite[Lemma~A.9]{KW2022} then proves the claim.
	
	Thus, we may apply Lemma~\ref{lem:stoch-conv-Psi-diffb-under-int} from Appendix~\ref{app:aux},
	write the result as two separate integrals,
	and pull the
	closed operator $A$ out of the stochastic integral defining
	$Z_{\beta, j+1}$ (cf.~\cite[Proposition~4.30]{DaPrato2014}) to find
	\begin{align}
		Z_{\beta,j}'(t)
		&= 
		Z_{\beta - 1, j}(t) - 
		Z_{\beta, j+1}(t). \label{eq:recurrence-Zint-N1j1} 
		\\
		&= 
		Z_{\beta-1, j}(t)
		-
		A Z_{\beta, j}(t). \label{eq:AZint}
	\end{align}
	Rearranging equation~\eqref{eq:AZint} for $\beta = \gamma$ and $j = 0$
	implies~\eqref{eq:recurrence-Zint} in the case $n = 0$.
	Applying~\eqref{eq:recurrence-Zint-N1j1} iteratively, we find
	that $Z_{\beta,j}$ has the $n$th mean-square derivative
	\begin{equation}\label{eq:mean-square-deriv-Zbetaj}
		Z_{\beta,j}^{(n)}(t) 
		= 
		\sum_{m=0}^n (-1)^m {n \choose m} Z_{\beta-n+m, j + m}(t),
	\end{equation}
	provided that $\beta - j - \gamma + N \in [n,\infty)$. 
	Now we again let $\beta = \gamma$ and $j = 0$
	and apply \eqref{eq:AZint}
	with
	$\beta' = \gamma - n + m$ and
	$j' = m$ to each term on the right-hand side to derive \eqref{eq:recurrence-Zint} for the remaining values of $n$.
\end{proof}

The statistical relevance of the process $Z_\gamma$ is motivated by the following fact,
which demonstrates a situation in which it has a marginal
temporal covariance
structure of Mat\'ern type. Let $(Q_{Z_\gamma}(s,t))_{s,t\in \R} \subseteq \LO_1^+(U)$ denote
the covariance operators of $Z_\gamma$,
defined in general via the relation
\begin{equation}
	\label{eq:cov-ops}
	\scalar{ Q_{Z_\gamma}(s,t) x, y }{U}  
	= 
	\E\left[ \Scalar{Z_\gamma(s)-\E \bigl[Z_\gamma(s)\bigr], x}{U} \Scalar{Z_\gamma(t)-\E[Z_\gamma(t)], y}{U} \right]
\end{equation}
for all $s,t \in \R$ and $x,y\in U$.
Note that $\E \bigl[Z_\gamma(\,\cdot\,)\bigr] \equiv 0$ in this case.

\begin{proposition}\label{prop:separable-cov-matern}
	Let $\gamma \in (\nicefrac{1}{2}, \infty)$, $A := \kappa \id_{U}$ with ${\kappa \in (0,\infty)}$ and
	suppose that
	Assumption~\textup{\ref{ass:HAQ}\ref{ass:HAQ:1}} is satisfied for $\gamma_0 = \gamma$.
	Then the covariance of $Z_\gamma$ is 
	separable and its temporal part 
	is of Matérn type, 
	i.e.,
	\begin{equation}
		\forall s, t \in \bbR, \, s \neq t : \quad
		Q_{Z_\gamma}(s,t) 
		=
		\frac{2^{\frac{1}{2}-\gamma}\kappa^{1-2\gamma}}
		{\sqrt{\pi}\,\Gamma(\gamma)}
		(\kappa |t-s|)^{\gamma-\frac{1}{2}} 
		K_{\gamma-\frac{1}{2}}(\kappa |t-s|)
		\, Q.
		\label{eq:temporal-asymp-matern-Q}  
	\end{equation}
\end{proposition}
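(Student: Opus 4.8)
The plan is to compute the covariance operators $Q_{Z_\gamma}(s,t)$ in closed form and read off both assertions. I would first record a cross-covariance form of the It\^o isometry~\eqref{eq:ito-R}. Writing $\Phi_t(r):=\frac{1}{\Gamma(\gamma)}\mathbf 1_{(-\infty,t]}(r)(t-r)^{\gamma-1}S(t-r)$, so that $Z_\gamma(t)=\int_\bbR\Phi_t(r)\rd W^Q(r)$, the scalar random variable $\scalar{Z_\gamma(t),y}{U}$ is the stochastic integral of the functional-valued integrand $r\mapsto\Phi_t(r)^*y$; polarizing~\eqref{eq:ito-R} over such real-valued integrals (and using $(Q^{1/2})^2=Q$) yields
\[
\scalar{Q_{Z_\gamma}(s,t)x,y}{U}=\int_\bbR\scalar{Q\,\Phi_s(r)^*x,\,\Phi_t(r)^*y}{U}\rd r,
\qquad x,y\in U.
\]
Finiteness of all integrals here is guaranteed by Assumption~\ref{ass:HAQ}\ref{ass:HAQ:1} with $\gamma_0=\gamma$, which is exactly the requirement that $\Phi_t(\,\cdot\,)Q^{\frac12}\in L^2(\bbR;\LO_2(U))$, i.e.\ that the defining stochastic integral exists.

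Separability is then immediate upon specializing to $A=\kappa\id_U$. In this case $S(r)=e^{-\kappa r}\id_U$ is a self-adjoint scalar multiple of the identity commuting with $Q$, so $\Phi_t(r)^*=\phi_t(r)\id_U$ with the scalar $\phi_t(r):=\frac{1}{\Gamma(\gamma)}\mathbf 1_{(-\infty,t]}(r)(t-r)^{\gamma-1}e^{-\kappa(t-r)}$. Substituting into the displayed formula gives
\[
Q_{Z_\gamma}(s,t)=c(s,t)\,Q,
\qquad
c(s,t):=\int_\bbR\phi_s(r)\phi_t(r)\rd r,
\]
which already exhibits the covariance as separable with temporal part $c(s,t)$ and spatial part $Q$. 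Taking without loss of generality $s<t$ (the integrand is symmetric in $s,t$) and writing $\tau:=|t-s|>0$, the support of the product is $\{r\le s\}$, and the substitution $u:=s-r$ reduces the scalar factor to
\[
c(s,t)=\frac{e^{-\kappa\tau}}{\Gamma(\gamma)^2}\int_0^\infty u^{\gamma-1}(u+\tau)^{\gamma-1}e^{-2\kappa u}\rd u,
\]
the integral converging since $\gamma>0$ and $\tau>0$.

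It remains to evaluate this last integral, which is the only genuinely non-routine step and where the Mat\'ern form appears. I would substitute $u=\tfrac{\tau}{2}(v-1)$, so that $u(u+\tau)=\tfrac{\tau^2}{4}(v^2-1)$ and $e^{-2\kappa u}=e^{\kappa\tau}e^{-\kappa\tau v}$, recasting the integral as a multiple of $\int_1^\infty(v^2-1)^{\gamma-1}e^{-\kappa\tau v}\rd v$. This is precisely the integral representation of the modified Bessel function of the second kind, $K_\nu(z)=\frac{\sqrt{\pi}(z/2)^\nu}{\Gamma(\nu+\frac12)}\int_1^\infty e^{-zt}(t^2-1)^{\nu-\frac12}\rd t$ for $\nu>-\tfrac12$ (see~\cite[Equation~(10.32.8)]{Olver2010}), applied with $\nu=\gamma-\tfrac12$ and $z=\kappa\tau$; the hypothesis $\gamma>\tfrac12$ ensures $\nu>-\tfrac12$. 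One obtains
\[
\int_0^\infty u^{\gamma-1}(u+\tau)^{\gamma-1}e^{-2\kappa u}\rd u
=\frac{\Gamma(\gamma)}{\sqrt{\pi}}\Bigl(\frac{\tau}{2\kappa}\Bigr)^{\gamma-\frac12}e^{\kappa\tau}K_{\gamma-\frac12}(\kappa\tau).
\]
Substituting back into $c(s,t)$, the factors $e^{\pm\kappa\tau}$ and one power of $\Gamma(\gamma)$ cancel, and a short bookkeeping of the powers of $2$, $\kappa$ and $\tau$ recovers exactly the prefactor $\frac{2^{\frac12-\gamma}\kappa^{1-2\gamma}}{\sqrt{\pi}\,\Gamma(\gamma)}(\kappa\tau)^{\gamma-\frac12}$ multiplying $K_{\gamma-\frac12}(\kappa\tau)$, establishing~\eqref{eq:temporal-asymp-matern-Q}. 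The main obstacle is thus the correct identification and application of the Bessel integral representation together with careful tracking of the constants; everything preceding it is a direct specialization of the It\^o isometry.
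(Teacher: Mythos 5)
Your proof is correct and follows essentially the same route as the paper's: both reduce the covariance to a scalar kernel integral times $Q$ via the polarized It\^o isometry, then evaluate that integral by a change of variables and a known Bessel-function identity. The only cosmetic difference is that the paper substitutes $u = h + 2(t-r)$ to reach $\int_{|h|}^\infty (u^2-h^2)^{\gamma-1}e^{-\kappa u}\,\mathrm du$ and cites a Laplace-transform table \cite[Part~I, Equation~(3.13)]{OberhettingerBadii1973}, whereas you rescale to $\int_1^\infty (v^2-1)^{\gamma-1}e^{-\kappa\tau v}\,\mathrm dv$ and invoke the integral representation of $K_\nu$ directly; these are the same computation up to the trivial substitution $u = \tau v$.
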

\begin{proof}
	For $A = \kappa \id_{U}$, Assumption~\ref{ass:standing} is trivially satisfied and
	the definition of $Z_\gamma$ takes on the following form for all $t \in \R$:
	\begin{equation}
		Z_\gamma(t) 
		= 
		\frac{1}{\Gamma(\gamma)} \int_{-\infty}^t (t-r)^{\gamma-1} e^{-\kappa(t-r)} \rd W^Q(r)
		=
		\int_\R k_{\gamma, \kappa}(t-r) \rd W^Q(r),
	\end{equation}
	with real-valued convolution kernel 
	$k_{\gamma, \kappa}(t) := \frac{1}{\Gamma(\gamma)}t^{\gamma-1}_+ e^{-\kappa t}$,
	where $t_+^{\gamma-1} := t^{\gamma - 1}$ if $t \in [0,\infty)$ and $t_+^{\gamma-1} := 0$ otherwise.
	Define the linear operator $\widetilde k(s, r; x) \in \LO(U; \bbR)$
	for $s, r \in \R$ and $x \in U$
	by 
	\[
	\widetilde k(s, r; x) h := k_{\gamma,\kappa}(s - r) \scalar{x, h}{U}, \quad h \in U.
	\]
	Then combining the It\^o isometry~\eqref{eq:ito-R} and the polarization identity yields
	\[ 
	\begin{aligned}
		&\E[ \scalar{Z_\gamma(s), x}{U} \, \scalar{Z_\gamma(t), y}{U} ]
		=
		\E\biggl[\int_\bbR \widetilde k(s,r; x) \rd W^Q(r) \int_\bbR \widetilde k(t,r; y) \rd W^Q(r) \biggr]
		\\
		&=
		\int_\bbR \scalar{\widetilde k(s,r; x) Q^\frac{1}{2},  \widetilde k(t,r;y)Q^\frac{1}{2}}{\LO_2(U; \R)} \rd r
		=
		\biggl\langle 
		\int_\bbR k_{\gamma,\kappa}(s-r) k_{\gamma,\kappa}(t-r) \rd r \, Qx, y\biggr\rangle_U. 
	\end{aligned}
	\] 
	Since $x, y \in U$ were arbitrary, 
	we find
	for all $h \in \R \setminus \{0\}$ the covariance operators
	\begin{equation}
		Q_{Z_\gamma}(t + h, t)
		=
		Q_{Z_\gamma}(t, t + h) = \int_\R k_{\gamma, \kappa}(t-r) k_{\gamma,\kappa}(t + h-r) \rd r \, Q.
	\end{equation}
	Using the change of variables $u(r) := h + 2(t-r)$ in the integral, we obtain
	\begin{equation}
		\begin{aligned}
			\int_\R &k_{\gamma, \kappa}(t-r) k_{\gamma,\kappa}(t + h-r) \rd r
			=
			\frac{1}{2}
			\int_\R k_{\gamma, \kappa}(\tfrac{u-h}{2}) k_{\gamma,\kappa}(\tfrac{u+h}{2}) \rd u
			\\
			&=
			\frac{2^{1-2\gamma}}{[\Gamma(\gamma)]^2}
			\int_{|h|}^\infty (u^2-h^2)^{\gamma-1} e^{-\kappa u} \rd u
			=
			\frac{2^{\frac{1}{2}-\gamma}\kappa^{1-2\gamma}}
			{\sqrt{\pi}\,\Gamma(\gamma)}
			(\kappa |h|)^{\gamma-\frac{1}{2}} 
			K_{\gamma-\frac{1}{2}}(\kappa |h|),
		\end{aligned}
	\end{equation}
	where the last identity follows by~\cite[Part~I, Equation~(3.13)]{OberhettingerBadii1973}.
\end{proof}

\subsection{Markov behavior}
\label{subsec:fractional-int-diff:Markov}
In this section we consider the Markov behavior of the process $Z_\gamma$ defined in Section~\ref{subsec:mildsol}.
Namely, we will show that $Z_N$ is $N$-ple Markov for $N \in \N$, 
whereas $Z_\gamma$ is in general not weak Markov if $\gamma\not\in \N$, see Theorem~\ref{thm:Nple-Markov-ZN} and Example~\ref{ex:non-Markovianity}, respectively.

\subsubsection{Integer case; main results}
\label{subsubsec:Markov-results}
We first introduce the necessary notation 
and intermediate results leading up
to the main theorem asserting the $N$-ple Markov property of $Z_N$.
The proofs are postponed to Subsection~\ref{subsubsec:Markov-proofs}.

If $\gamma \in (\nicefrac{1}{2}, \infty)$ is such that 
Assumptions~\ref{ass:standing} and~\ref{ass:HAQ}\ref{ass:HAQ:1} hold with $\gamma_0 = \gamma$,
then we define for $t_0 \in \R$ the 
truncated integral process $(\widetilde Z_\gamma(t \mid t_0))_{t\in\R}$ by 
\begin{equation}\label{eq:def-truncated-process}
	\widetilde Z_\gamma(t \mid t_0) 
	:= 
	\frac{1}{\Gamma(\gamma)}
	\int_{-\infty}^{t \wedge t_0} (t-s)^{\gamma-1} S(t-s) \rd W^Q(s),
\end{equation}
so that $\widetilde Z_\gamma(\,\cdot \mid t_0) = Z_\gamma$ on $(-\infty, t_0]$
and
$Z_\gamma = \widetilde Z_\gamma(\,\cdot \mid t_0)
+
Z_\gamma(\,\cdot\, \mid t_0)$ on $(t_0, \infty)$,
where we recall the process $(Z_{\gamma}(t \mid t_0))_{t\in[t_0,\infty)}$ from~\eqref{eq:Zgamma-t-t0}.
It immediately follows that $\widetilde Z_\gamma(t \mid t_0)$ has the same temporal regularity
at $t \in \bbR\setminus\{t_0\}$
as $Z_\gamma(t)$ (and ${Z_\gamma(t \mid t_0)}$ if $t \in (t_0, \infty)$).
In the case $\gamma = N \in \N$,
both have $N - 1$ mean-square derivatives 
by Proposition~\ref{prop:Zgamma-diffbty} if Assumption~\ref{ass:HAQ}\ref{ass:HAQ:1}
holds for $\gamma_0 = \gamma - (N - 1) = 1$.
The same holds at the critical point $t = t_0$ since
the first $N - 1$ mean-square (right) derivatives of $Z_\gamma(\,\cdot \mid t_0)$ vanish there, 
see~\eqref{eq:mean-square-deriv-Zbetaj} in the proof of Proposition~\ref{prop:Zgamma-diffbty}.

Under Assumption~\ref{ass:HAQ}\ref{ass:HAQ:2}, we have
$A^j S(t) \in \mathscr L(U)$ 
for all $j \in \N_0$ and $t \in (0,\infty)$, see~\eqref{eq:analytic-semigroup-est}.
Therefore, we can define the function
$\overline\Gamma(n, (\,\cdot\,) A) \from [0,\infty) \to \mathscr L(U)$ by
\begin{equation}\label{eq:def-incgamma}
	\begin{aligned}
		\overline\Gamma(n, tA) 
		:= 
		\begin{cases}
			\sum_{j=0}^{n-1} \frac{t^j}{j!} A^j S(t), \quad &t \in (0,\infty); \\
			\id_U, \quad &t = 0,
		\end{cases}
	\end{aligned}
\end{equation}
for $n \in \N$.
We point out the analogy with integer-order scalar-valued normalized
upper incomplete gamma functions \cite[Equations~(8.4.10) and~(8.4.11)]{Olver2010}.
We will use these functions to derive an expression
for $(\widetilde Z_N(t \mid t_0))_{t\in[t_0, \infty)}$
in terms of $Z_N$ and its mean-square derivatives at $t_0$.
Recall that $\mathbf Z_N = (Z_N^{(n)})_{n=0}^{N-1}$ indicates
the $U^N$-valued process consisting of $Z_N$ and its first $N-1$ mean-square derivatives.
\begin{proposition}\label{prop:relation-ZN-initialvalue}
	Let Assumptions~\textup{\ref{ass:standing} and~\ref{ass:HAQ}\ref{ass:HAQ:2}} be satisfied
	and suppose that Assumption~\textup{\ref{ass:HAQ}\ref{ass:HAQ:1}} holds for $\gamma_0 = 1$.
	Then for all $N \in \N$, $t_0 \in \bbR$ and $t \in [t_0, \infty)$,
	\begin{equation}\label{eq:relation-ZN-minusinfty-initialval-2}
		\widetilde Z_N(t \mid t_0) 
		= 
		\zeta_N(t \mid t_0) \mathbf Z_N (t_0), 
		\quad
		\bbP\text{-a.s.},
	\end{equation}
	where we define,
	for any $\bm\xi \in L^2(\Omega, \cF_{t_0}, \bbP; U^N)$,
	\begin{equation}\label{eq:def-zetaN}
		\zeta_{N}(t \mid t_0)\bm\xi 
		:= 
		\sum_{k=0}^{N-1} \frac{(t-t_0)^k}{k!}
		\overline\Gamma(N-k, (t-t_0)A)\, \xi_k,
	\end{equation}
	using the incomplete gamma functions defined in~\eqref{eq:def-incgamma}.
	
	In particular, adding $Z_N(t \mid t_0)$ on both sides of equation~\eqref{eq:relation-ZN-minusinfty-initialval-2} yields
	\begin{equation}\label{eq:infinite-time-process-as-finite-time-process}
		\forall t \in [t_0, \infty) : \quad 
		Z_N(t) = Z_N(t \mid t_0, \mathbf Z_N(t_0)), \quad \bbP\text{-a.s.},
	\end{equation}
	where the process $(Z_N(t \mid t_0, \bm\xi))_{t \in [t_0, \infty)}$ is defined by
	\begin{equation}
		\label{eq:def-Zt0xi}
		Z_N(t \mid t_0, \bm{\xi}) 
		:= 
		\zeta_{N}(t \mid t_0)\bm\xi
		+
		Z_N(t \mid t_0), 
		\quad 
		t \in [t_0, \infty).
	\end{equation}
\end{proposition}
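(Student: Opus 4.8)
The plan is to exploit the semigroup law $S(t-s) = S(t-t_0)S(t_0-s)$, valid for $s \le t_0 \le t$, to factor the truncated process from~\eqref{eq:def-truncated-process}. First I would pull the fixed bounded operator $S(t-t_0)$ out of the stochastic integral (as in the paper's use of~\cite[Proposition~4.30]{DaPrato2014}) to write $\widetilde Z_N(t\mid t_0) = S(t-t_0)V(t)$, where $V(t) := \frac{1}{\Gamma(N)}\int_{-\infty}^{t_0}(t-s)^{N-1}S(t_0-s)\rd W^Q(s)$. Expanding $(t-s)^{N-1} = ((t-t_0)+(t_0-s))^{N-1}$ by the binomial theorem and splitting the integral by linearity then yields $V(t) = \sum_{p=0}^{N-1}\frac{(t-t_0)^p}{p!}Z_{N-p}(t_0)$, after using $\binom{N-1}{p}\Gamma(N-p)/\Gamma(N) = 1/p!$ and recognizing $\int_{-\infty}^{t_0}(t_0-s)^{N-1-p}S(t_0-s)\rd W^Q(s) = \Gamma(N-p)Z_{N-p}(t_0)$. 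Each lower-order process $Z_{N-p}$ (for $p = 0,\dots,N-1$, so that $N-p \ge 1$) is well defined and mean-square continuous by the first part of Proposition~\ref{prop:Zgamma-diffbty}, since Assumption~\ref{ass:HAQ}\ref{ass:HAQ:1} holds for $\gamma_0 = 1$ and hence for all $\gamma_0 \ge 1$.

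Next I would rewrite the coefficients $Z_{N-p}(t_0)$ in terms of the derivative vector $\mathbf Z_N(t_0)$. Iterating the recurrence~\eqref{eq:recurrence-Zint} gives $(\frac{\rd}{\rd t}+A)^p Z_N = Z_{N-p}$ on $\bbR$; since $A$ is closed and commutes with mean-square differentiation on the range of $Z_N$, the binomial theorem for the commuting operations $\frac{\rd}{\rd t}$ and $A$ yields $(\frac{\rd}{\rd t}+A)^p Z_N = \sum_{i=0}^p \binom{p}{i}A^{p-i}Z_N^{(i)}$. Evaluating at $t_0$ produces $Z_{N-p}(t_0) = \sum_{i=0}^p\binom{p}{i}A^{p-i}Z_N^{(i)}(t_0)$. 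Substituting into the formula for $V$ and moving $S(t-t_0)$ back inside gives $\widetilde Z_N(t\mid t_0) = \sum_{p=0}^{N-1}\frac{(t-t_0)^p}{p!}\sum_{i=0}^p\binom{p}{i}A^{p-i}S(t-t_0)Z_N^{(i)}(t_0)$.

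Finally I would interchange the order of summation, collecting terms by the derivative order $i$, and set $j := p-i$. Using $\frac{1}{p!}\binom{p}{i} = \frac{1}{i!\,j!}$, the inner sum becomes $\frac{(t-t_0)^i}{i!}\sum_{j=0}^{N-1-i}\frac{(t-t_0)^j}{j!}A^jS(t-t_0) = \frac{(t-t_0)^i}{i!}\overline\Gamma(N-i,(t-t_0)A)$ by the definition~\eqref{eq:def-incgamma} of the incomplete gamma operators, so that the whole expression equals $\zeta_N(t\mid t_0)\mathbf Z_N(t_0)$ from~\eqref{eq:def-zetaN}, establishing~\eqref{eq:relation-ZN-minusinfty-initialval-2}. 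The identities~\eqref{eq:infinite-time-process-as-finite-time-process} and~\eqref{eq:def-Zt0xi} then follow by adding $Z_N(t\mid t_0)$ to both sides and recalling that $Z_N = \widetilde Z_N(\cdot\mid t_0) + Z_N(\cdot\mid t_0)$ on $(t_0,\infty)$.

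\textbf{Main obstacle.} The delicate point is the passage from the lower-order values $Z_{N-p}(t_0)$ to the derivative data $Z_N^{(i)}(t_0)$: one must justify that $A$ and $\frac{\rd}{\rd t}$ genuinely commute on the mean-square-differentiable range of the full ($-\infty$-based) process $Z_N$, so that the operator binomial expansion is legitimate, and that each $Z_N^{(i)}(t_0)$ lies in the domain of the relevant power of $A$. Both are controlled by the analyticity of the semigroup (Assumption~\ref{ass:HAQ}\ref{ass:HAQ:2}) together with the smoothing estimate~\eqref{eq:analytic-semigroup-est} and the differentiability already furnished by Proposition~\ref{prop:Zgamma-diffbty}. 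I would therefore keep the entire computation inside the stochastic integral over $(-\infty,t_0]$, where the factor $S(t_0-s)$ supplies all needed regularity, so as to avoid evaluating $A^jS$ at the singular time $t=t_0$.
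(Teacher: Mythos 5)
Your proposal is correct, but it takes a genuinely different route from the paper's proof. The paper argues by induction on $N$: the base case is exactly your semigroup factorization for $N=1$, and the inductive step applies $\bigl(\tfrac{\mathrm d}{\mathrm dt}+A\bigr)$ to both sides of~\eqref{eq:relation-ZN-minusinfty-initialval-2}, invokes Propositions~\ref{prop:Zgamma-diffbty} and~\ref{prop:recurrence-zeta} together with the induction hypothesis to show that $\zeta_{N+1}(\,\cdot \mid t_0)\mathbf Z_{N+1}(t_0)$ and $\widetilde Z_{N+1}(\,\cdot \mid t_0)$ solve the same first-order abstract Cauchy problem in $L^2(\Omega,\cF,\bbP;U)$ with the same value at $t_0$, and concludes by the uniqueness theorem~\cite[Chapter~4, Theorem~1.3]{Pazy1983}. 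You instead compute directly: the binomial expansion of the kernel $(t-s)^{N-1}=\bigl((t-t_0)+(t_0-s)\bigr)^{N-1}$ expresses $\widetilde Z_N(t\mid t_0)$ through the lower-order values $Z_{N-p}(t_0)$ (your coefficient arithmetic $\binom{N-1}{p}\Gamma(N-p)/\Gamma(N)=1/p!$ is right), the inverted recurrence $Z_{N-p}(t_0)=\sum_{i=0}^{p}\binom{p}{i}A^{p-i}Z_N^{(i)}(t_0)$ converts these into the derivative data, and the re-summation with $j=p-i$ correctly reproduces the incomplete-gamma operators of~\eqref{eq:def-zetaN}. The step you flag as the main obstacle is indeed the only one needing real care: commuting the closed operator $A$ with mean-square differentiation and checking $Z_N^{(i)}(t_0)\in\dom{A^{p-i}}$. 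Both are available from the paper's own machinery: formula~\eqref{eq:mean-square-deriv-Zbetaj} writes $Z_N^{(i)}(t_0)$ as a combination of the integrals $Z_{\beta,j}(t_0)$, the identities~\eqref{eq:recurrence-Zint-N1j1} and~\eqref{eq:AZint} give $AZ_{\beta,j}=Z_{\beta,j+1}$ so that powers of $A$ can be absorbed into the stochastic integral, and Assumption~\ref{ass:HAQ}\ref{ass:HAQ:1} with $\gamma_0=1$, Lemma~\ref{lem:intb-assumption-nestedness} and estimate~\eqref{eq:analytic-semigroup-est} keep every term square-integrable because $\beta-j=N-p\ge 1$ throughout. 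What your approach buys is an explicit, induction-free derivation that exposes the combinatorial origin of the $\overline\Gamma(N-k,(t-t_0)A)$ coefficients; what the paper's approach buys is minimal operator-domain bookkeeping, since the single first-order comparison via the abstract Cauchy problem replaces all of the binomial and domain manipulations.
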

By~\eqref{eq:infinite-time-process-as-finite-time-process}, it suffices to 
show that $(Z_N(t \mid t_0, \bm\xi))_{t \in [t_0, \infty)}$
has the $N$-ple Markov property in the sense of Definition~\ref{def:Nple-markov-Doob}
for arbitrary $t_0 \in \R$ and $\bm\xi \in L^2(\Omega, \cF_{t_0}, \bbP; U^N)$.
In fact, we will show that it is $N$-ple Markov using
Corollary~\ref{def:Nple-markov-TO}; this is the 
subject of the following result, which is the main theorem of this section.
\begin{theorem}\label{thm:Nple-Markov-ZN}
	Let $N \in \N$, $t_0 \in \bbR$ and 
	$\bm\xi = (\xi_k)_{k=0}^{N-1} \in L^2(\Omega, \cF_{t_0},\bbP; U^{N+1})$
	be given.
	Let Assumptions~\textup{\ref{ass:standing} and~\ref{ass:HAQ}\ref{ass:HAQ:2}} hold
	and suppose that Assumption~\textup{\ref{ass:HAQ}\ref{ass:HAQ:1}} is satisfied for $\gamma_0 = 1$.
	Then the process $(Z_N(t \mid t_0, \bm{\xi}))_{t\in[t_0,\infty)}$ from~\eqref{eq:def-Zt0xi}
	has the $N$-ple Markov property in the sense of Definition~\textup{\ref{def:Nple-markov-Doob}}
	with respect to the transition operators
	$(T_{s,t})_{t_0 \le s \le t}$ on $B_b(U^N)$ defined by
	\begin{equation}\label{eq:transition-Z}
		T_{s, t}\varphi(\mathbf x) := \E[\varphi(\mathbf Z_N(t \mid s, \mathbf x))],
		\quad
		\varphi \in B_b(U^N), 
		\;
		\mathbf x \in U^N,
	\end{equation}
	and the increment filtration $(\cF_t)_{t\in[t_0, \infty)} := (\cF_t^{\delta W^Q})_{t\in[t_0, \infty)}$
	from~\eqref{eq:increment-filtration}.
	
	The process $(Z_N(t))_{t\in \bbR}$ from~\eqref{eq:def-minusinfty-Zgamma}
	has $N-1$ mean-square derivatives and
	is $N$-ple Markov in the sense of Definition~\ref{def:Nple-markov-Doob}
	with respect to $(\cF_t)_{t\in\bbR}$.
\end{theorem}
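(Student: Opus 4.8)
The plan is to derive both assertions from Corollary~\ref{def:Nple-markov-TO} by establishing a deterministic--stochastic flow (cocycle) property for the family of processes in~\eqref{eq:def-Zt0xi}. Write $\mathbf Z_N(t\mid t_0,\bm\xi):=(\tfrac{\mathrm d^n}{\mathrm dt^n}Z_N(t\mid t_0,\bm\xi))_{n=0}^{N-1}$ for the $U^N$-valued state vector of this process; it is well defined because $Z_N(\,\cdot\mid t_0)$ has $N-1$ mean-square derivatives by Proposition~\ref{prop:Zgamma-diffbty} (applied with $\gamma=N$ and $\gamma_0=1$) and $\zeta_N(\,\cdot\mid t_0)\bm\xi$ is smooth by~\eqref{eq:def-zetaN}. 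Recall from~\eqref{eq:Nple-markov-cond-prob-vec} that $N$-ple Markovianity of $Z_N$ is equivalent to the \emph{simple} Markov property of the state process $\mathbf Z_N$, so it suffices to furnish transition operators for the latter. The crux is the cocycle identity
\[
\mathbf Z_N(t\mid t_0,\bm\xi)
=
\mathbf Z_N\bigl(t\mid s,\mathbf Z_N(s\mid t_0,\bm\xi)\bigr),
\quad \bbP\text{-a.s.},\qquad t_0\le s\le t,
\]
which says that restarting the state process at an intermediate time $s$ from its own value reproduces it.

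I would prove this identity in two steps. First, the deterministic part: the map $\mathbf x\mapsto\zeta_N(t\mid t_0)\mathbf x$ from~\eqref{eq:def-zetaN} is the first component of a \emph{state propagator} $\Xi(t\mid t_0)\in\LO(U^N)$ sending initial data at $t_0$ to the state at $t$ of the homogeneous solution of $(\partial_t+A)^N u=0$. Using the incomplete-gamma representation~\eqref{eq:def-incgamma} and the semigroup law of $(S(t))_{t\ge0}$, one checks the propagator relation $\Xi(t\mid s)\Xi(s\mid t_0)=\Xi(t\mid t_0)$. Second, the stochastic restart: splitting the integral~\eqref{eq:Zgamma-t-t0} defining $Z_N(t\mid t_0)$ at $s$ expresses it as $Z_N(t\mid s)$ plus the tail $\tfrac{1}{\Gamma(N)}\int_{t_0}^{s}(t-r)^{N-1}S(t-r)\,\rd W^Q(r)$; running the argument of Proposition~\ref{prop:relation-ZN-initialvalue} on this tail identifies it with $\zeta_N(t\mid s)\mathbf Z_N(s\mid t_0)$, so that in state-vector form $\mathbf Z_N(t\mid t_0)=\Xi(t\mid s)\mathbf Z_N(s\mid t_0)+\mathbf R_{s,t}$, where $\mathbf R_{s,t}=\mathbf Z_N(t\mid s)$ is built only from increments of $W^Q$ on $(s,t]$. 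Combining this with the propagator relation applied to the $\bm\xi$-term yields the cocycle identity. \textbf{I expect this flow property --- concretely, the propagator semigroup relation together with the restart identity --- to be the main obstacle;} everything afterwards is soft.

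With the cocycle identity in hand, fix $t_0\le s\le t$ and note that $\mathbf Z_N(t\mid s,\mathbf x)=\Xi(t\mid s)\mathbf x+\mathbf R_{s,t}$, a sum of an $\cF_s$-measurable term linear in $\mathbf x$ and a state vector assembled solely from increments of $W^Q$ on $(s,t]$, which is independent of $\cF_s$ by~\eqref{item:WP3prime}. Since $\mathbf Z_N(s\mid t_0,\bm\xi)$ is $\cF_s$-measurable (its $\zeta_N$-part is $\cF_{t_0}\subseteq\cF_s$-measurable and its stochastic part integrates over $(t_0,s]$), a freezing argument for conditional expectations gives
\[
\E\bigl[\varphi(\mathbf Z_N(t\mid t_0,\bm\xi))\mid\cF_s\bigr]
=
\E\bigl[\varphi(\mathbf Z_N(t\mid s,\mathbf x))\bigr]\big|_{\mathbf x=\mathbf Z_N(s\mid t_0,\bm\xi)}
=
T_{s,t}\varphi\bigl(\mathbf Z_N(s\mid t_0,\bm\xi)\bigr),
\]
with $T_{s,t}$ as in~\eqref{eq:transition-Z}; one checks that $T_{s,t}$ maps $B_b(U^N)$ into itself (boundedness of $\Xi(t\mid s)$ and dominated convergence for the measurability). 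Corollary~\ref{def:Nple-markov-TO} then yields the $N$-ple Markov property of $(Z_N(t\mid t_0,\bm\xi))_{t\in[t_0,\infty)}$.

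For the final statement, $Z_N$ has $N-1$ mean-square derivatives on $\bbR$ by Proposition~\ref{prop:Zgamma-diffbty} (with $t_0=-\infty$, $\gamma=N$, $\gamma_0=1$). Applying~\eqref{eq:infinite-time-process-as-finite-time-process} at cutoff $s$ and differentiating in $t$ up to order $N-1$ gives the state-vector identity $\mathbf Z_N(t)=\mathbf Z_N(t\mid s,\mathbf Z_N(s))$, $\bbP$-a.s.; since $\mathbf Z_N(s)$ is $\cF_s$-measurable, the very same freezing argument shows $\E[\varphi(\mathbf Z_N(t))\mid\cF_s]=T_{s,t}\varphi(\mathbf Z_N(s))$ for all $\varphi\in B_b(U^N)$, and Corollary~\ref{def:Nple-markov-TO} gives the $N$-ple Markov property of $(Z_N(t))_{t\in\bbR}$ with respect to $(\cF_t)_{t\in\bbR}$.
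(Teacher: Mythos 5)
Your proposal follows essentially the same route as the paper: the crux in both cases is the restart/Chapman--Kolmogorov identity (the paper's Proposition~\ref{prop:restart}), which the paper proves by induction on $N$ via uniqueness for abstract Cauchy problems, and which you obtain through the equivalent combination of the state-propagator relation with the splitting of the stochastic integral at $s$ (the finite-lower-limit analog of Proposition~\ref{prop:relation-ZN-initialvalue}, proven by the same induction machinery you invoke). After that, your freezing-lemma argument is just the packaged form of the paper's hands-on Step~2 (monotone class reduction to $C_b(U^N)$, simple $\bm\xi$, independence of increments via~\eqref{item:WP3prime}, then density), and both arguments conclude with Corollary~\ref{def:Nple-markov-TO} and dispatch the $\bbT=\bbR$ case through~\eqref{eq:infinite-time-process-as-finite-time-process}.
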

The statements and proofs of Proposition~\ref{prop:relation-ZN-initialvalue} and Theorem~\ref{thm:Nple-Markov-ZN} 
use the following result regarding the mean-square differentiability of 
$(\zeta_{N}(t \mid t_0)\bm\xi)_{t \in [t_0,\infty)}$, which is similar to~Proposition~\ref{prop:Zgamma-diffbty}.
\begin{proposition}\label{prop:recurrence-zeta}
	Let $N \in \{2,3,\dots\}$, $t_0 \in \bbR$ and $\bm\xi \in L^2(\Omega, \cF_{t_0},\bbP; U^{N})$
	be given,
	where $\xi_k \in L^2(\Omega; \mathsf D(A))$ for $k \in \{0, \dots,  N-2\}$.
	Suppose that Assumptions~\textup{\ref{ass:standing} and~\ref{ass:HAQ}\ref{ass:HAQ:2}} hold.
	Then the process $(\zeta_{N}(t \mid t_0)\bm{\xi})_{t \in [t_0,\infty)}$ from~\eqref{eq:def-zetaN}
	is infinitely mean-square differentiable at any $t \in (t_0, \infty)$
	and, for $n \in \{0,\dots,N-2\}$,
	\begin{equation}\label{eq:recurrence-zeta}
		\left(\tfrac{\mathrm d}{\mathrm dt} + A\right)\tfrac{\mathrm d^n}{\mathrm dt^n}
		\zeta_{N}(t \mid t_0) \bm \xi
		= 
		\tfrac{\mathrm d^n}{\mathrm dt^n} \zeta_{N-1}(t \bigm\vert t_0)(\xi_{k+1} + A\xi_{k})_{k=0}^{N-2},
		\quad
		\bbP\text{-a.s.}
	\end{equation}
	Moreover, we have $\left(\tfrac{\mathrm d}{\mathrm dt} + A\right) \tfrac{\mathrm d^n}{\mathrm dt^n} \zeta_{1}(t \mid t_0)\xi 
	= 
	0$, $\bbP$-a.s.,
	for $\xi \in L^2(\Omega, \cF_{t_0}, \bbP; U)$.
\end{proposition}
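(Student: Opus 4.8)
The plan is to fix $t_0$ and work in the shifted variable $\tau := t - t_0 \in (0,\infty)$, noting that a $t$-derivative coincides with a $\tau$-derivative. Under Assumption~\ref{ass:HAQ}\ref{ass:HAQ:2}, each map $\tau \mapsto A^j S(\tau)$ is infinitely differentiable from $(0,\infty)$ to $\LO(U)$ with $\tfrac{\mathrm d}{\mathrm d\tau} A^j S(\tau) = -A^{j+1} S(\tau)$, all terms being bounded for $\tau > 0$ by~\eqref{eq:analytic-semigroup-est}. Since $\zeta_N(t \mid t_0)$ is, by~\eqref{eq:def-zetaN} and~\eqref{eq:def-incgamma}, a finite $\LO(U)$-valued linear combination of the smooth functions $\tau^{k+j} A^j S(\tau)$, it is infinitely differentiable in operator norm on $(0,\infty)$; applying it to the fixed $\bm\xi \in L^2(\Omega; U^N)$ therefore yields an infinitely mean-square differentiable process, which settles the first assertion. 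It then remains to identify the derivatives, and I would first prove the recurrence for $n = 0$ and afterwards bootstrap to $n \in \{1,\dots,N-2\}$ by commuting $A$ with $\tfrac{\mathrm d}{\mathrm dt}$.

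The computational core is the elementary identity
\begin{equation*}
	\Bigl(\tfrac{\mathrm d}{\mathrm d\tau} + A\Bigr)\overline\Gamma(n, \tau A) = A\,\overline\Gamma(n-1, \tau A), \qquad \tau \in (0,\infty), \; n \in \N,
\end{equation*}
with the convention $\overline\Gamma(0, \tau A) := 0$ (the empty sum). To obtain it I would differentiate~\eqref{eq:def-incgamma} term by term: the product rule together with $\tfrac{\mathrm d}{\mathrm d\tau}A^j S(\tau) = -A^{j+1}S(\tau)$ produces a telescoping sum leaving $\tfrac{\mathrm d}{\mathrm d\tau}\overline\Gamma(n,\tau A) = -\tfrac{\tau^{n-1}}{(n-1)!}A^n S(\tau)$, and adding $A\,\overline\Gamma(n,\tau A) = \sum_{j=0}^{n-1}\tfrac{\tau^j}{j!}A^{j+1}S(\tau)$ cancels its top ($j = n-1$) term, yielding the claim. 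In particular, for $n = 1$ this gives $\bigl(\tfrac{\mathrm d}{\mathrm d\tau} + A\bigr)S(\tau) = 0$; since $\zeta_1(t \mid t_0)\xi = \overline\Gamma(1,\tau A)\xi = S(\tau)\xi$ and $A$ commutes with $\tfrac{\mathrm d^n}{\mathrm dt^n}$ on the smooth map $\tau \mapsto S(\tau)\xi$, which is $\dom{A^j}$-valued for every $j$ (see below), this immediately yields the final claim $\bigl(\tfrac{\mathrm d}{\mathrm dt}+A\bigr)\tfrac{\mathrm d^n}{\mathrm dt^n}\zeta_1(t \mid t_0)\xi = 0$.

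For the base case $n = 0$ of the recurrence I would apply $\bigl(\tfrac{\mathrm d}{\mathrm d\tau}+A\bigr)$ to $\zeta_N(t \mid t_0)\bm\xi = \sum_{k=0}^{N-1}\tfrac{\tau^k}{k!}\overline\Gamma(N-k,\tau A)\xi_k$. The product rule and the identity above give
\begin{equation*}
	\Bigl(\tfrac{\mathrm d}{\mathrm d\tau}+A\Bigr)\zeta_N(t \mid t_0)\bm\xi = \sum_{k=1}^{N-1}\tfrac{\tau^{k-1}}{(k-1)!}\overline\Gamma(N-k,\tau A)\xi_k + \sum_{k=0}^{N-2}\tfrac{\tau^k}{k!}A\,\overline\Gamma(N-1-k,\tau A)\xi_k,
\end{equation*}
where the $k = N-1$ term of the second sum drops out because $\overline\Gamma(0,\tau A) = 0$. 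Shifting the index $k \mapsto k+1$ in the first sum and using the hypothesis $\xi_k \in \dom{A}$ for $k \le N-2$ to move $A$ through $\overline\Gamma(N-1-k,\tau A)$ --- legitimate since $S(\tau)A\xi_k = AS(\tau)\xi_k$ on $\dom{A}$ --- the two sums combine into $\sum_{k=0}^{N-2}\tfrac{\tau^k}{k!}\overline\Gamma(N-1-k,\tau A)(\xi_{k+1}+A\xi_k)$, which is exactly $\zeta_{N-1}(t \mid t_0)(\xi_{k+1}+A\xi_k)_{k=0}^{N-2}$ by~\eqref{eq:def-zetaN}.

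Finally, to pass from $n = 0$ to general $n \in \{1,\dots,N-2\}$, I would use that $A$ commutes with $\tfrac{\mathrm d^n}{\mathrm dt^n}$ on the processes at hand: for $\tau > 0$ each summand takes values in $\dom{A^j}$ for every $j \in \N$, and both it and its image under $A$ are mean-square differentiable, so closedness of the Bochner realization of $A$ gives $A\tfrac{\mathrm d}{\mathrm dt}u = \tfrac{\mathrm d}{\mathrm dt}(Au)$, and iterating yields $\bigl(\tfrac{\mathrm d}{\mathrm dt}+A\bigr)\tfrac{\mathrm d^n}{\mathrm dt^n} = \tfrac{\mathrm d^n}{\mathrm dt^n}\bigl(\tfrac{\mathrm d}{\mathrm dt}+A\bigr)$ on these functions. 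Applying $\tfrac{\mathrm d^n}{\mathrm dt^n}$ to the $n = 0$ recurrence then gives~\eqref{eq:recurrence-zeta}. I expect the telescoping and reindexing bookkeeping of the previous paragraph, together with the careful justification that operator-norm smoothness transfers to mean-square smoothness while $A$ commutes with the time derivatives, to be the only genuine subtleties; everything else is a direct computation.
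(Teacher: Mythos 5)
Your proof is correct and takes essentially the same route as the paper's: the same key identity $\bigl(\tfrac{\mathrm d}{\mathrm dt} + A\bigr)\overline\Gamma(n, tA) = A\,\overline\Gamma(n-1, tA)$, the same product-rule computation for the $n=0$ case --- with the hypothesis $\xi_k \in \dom{A}$ used, as you correctly point out, to move $A$ through $\overline\Gamma(N-1-k,\,\cdot\,)$ --- and the same closedness argument to commute $A$ with the time derivatives. The only difference is organizational: the paper introduces the auxiliary operators $A^j \zeta_M(t \mid t_0)$ and iterates its recurrence to produce an explicit formula for the $n$th derivative, whereas you deduce (infinite) mean-square differentiability directly from operator-norm smoothness of the finite linear combination of the maps $t \mapsto (t-t_0)^{k+j} A^j S(t-t_0)$, which in fact delivers the ``infinitely differentiable'' claim somewhat more transparently.
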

Combining Propositions~\ref{prop:Zgamma-diffbty}, \ref{prop:relation-ZN-initialvalue} and~\ref{prop:recurrence-zeta} yields:
\begin{corollary}\label{cor:recurrence-}
	Let $N \in \{2,3,\dots\}$, $t_0 \in \bbR$ and $\bm\xi \in L^2(\Omega, \cF_{t_0},\bbP; U^{N})$
	be given,
	where $\xi_k \in L^2(\Omega; \mathsf D(A))$ for $k \in \{0, \dots,  N-2\}$.
	Let Assumptions~\textup{\ref{ass:standing} and~\ref{ass:HAQ}\ref{ass:HAQ:2}} hold
	and suppose that Assumption~\textup{\ref{ass:HAQ}\ref{ass:HAQ:1}} is satisfied for $\gamma_0 = 1$.
	Then the process $(Z_{N}(t \mid t_0, \bm{\xi}))_{t \in [t_0,\infty)}$ from~\eqref{eq:def-Zt0xi}
	is $N-1$ times mean-square differentiable at any $t \in (t_0, \infty)$
	and satisfies, for $n \in \{0, \dots, N-2\}$,
	\begin{equation}\label{eq:recurrence-Z0-SP}
		\left(\tfrac{\mathrm d}{\mathrm dt} + A\right) \tfrac{\mathrm d^n}{\mathrm dt^n} Z_{N}(t \mid t_0, \bm \xi) 
		= 
		\tfrac{\mathrm d^n}{\mathrm dt^n} Z_{N-1}\bigl(t \bigm\vert t_0, (\xi_{k+1} + A\xi_{k})_{k=0}^{N-2}\,\bigr),
		\quad
		\bbP\text{-a.s.}
	\end{equation}
	In particular, it holds for all $t \in \bbR$ and $n \in \{0, \dots, N-2\}$ that
	\begin{equation}
		\left(\tfrac{\mathrm d}{\mathrm dt} + A\right)\tfrac{\mathrm d^n}{\mathrm dt^n} Z_{N}(t) 
		= 
		\tfrac{\mathrm d^n}{\mathrm dt^n} Z_{N-1}(t),
		\quad
		\bbP\text{-a.s.}
	\end{equation}
\end{corollary}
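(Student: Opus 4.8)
The plan is to combine the three preceding propositions by linearity, treating separately the two summands in the definition~\eqref{eq:def-Zt0xi} of $Z_N(t \mid t_0, \bm\xi) = \zeta_N(t \mid t_0)\bm\xi + Z_N(t \mid t_0)$. The key observation is that each summand is handled by exactly one of Propositions~\ref{prop:recurrence-zeta} and~\ref{prop:Zgamma-diffbty}, and that the two resulting identities recombine precisely into the definition~\eqref{eq:def-Zt0xi} at level $N-1$.

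First I would settle the mean-square differentiability. By Proposition~\ref{prop:recurrence-zeta} the process $(\zeta_N(t \mid t_0)\bm\xi)_{t}$ is infinitely mean-square differentiable on $(t_0,\infty)$, while Proposition~\ref{prop:Zgamma-diffbty}, applied with $\gamma = N$ and with its own genericity parameter ``$N$'' taken to be $N-1$, shows that $(Z_N(t \mid t_0))_t$ has $N-1$ mean-square derivatives; this application is legitimate precisely because the standing hypothesis that Assumption~\ref{ass:HAQ}\ref{ass:HAQ:1} holds for $\gamma_0 = 1 = \gamma - (N-1)$ is exactly what it requires. Hence the sum $Z_N(t \mid t_0, \bm\xi)$ is $N-1$ times mean-square differentiable on $(t_0,\infty)$.

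Next, to obtain~\eqref{eq:recurrence-Z0-SP}, I would fix $n \in \{0,\dots,N-2\}$ and apply the linear operator $\bigl(\tfrac{\mathrm d}{\mathrm dt}+A\bigr)\tfrac{\mathrm d^n}{\mathrm dt^n}$ to each summand. Proposition~\ref{prop:recurrence-zeta} gives $\bigl(\tfrac{\mathrm d}{\mathrm dt}+A\bigr)\tfrac{\mathrm d^n}{\mathrm dt^n}\zeta_N(t \mid t_0)\bm\xi = \tfrac{\mathrm d^n}{\mathrm dt^n}\zeta_{N-1}(t \mid t_0)(\xi_{k+1}+A\xi_k)_{k=0}^{N-2}$, where the hypothesis $\xi_k \in L^2(\Omega;\dom{A})$ for $k \le N-2$ ensures that the shifted datum $(\xi_{k+1}+A\xi_k)_{k=0}^{N-2}$ lies in $L^2(\Omega,\cF_{t_0},\bbP; U^{N-1})$, so that $Z_{N-1}(\,\cdot \mid t_0, (\xi_{k+1}+A\xi_k)_{k=0}^{N-2})$ is well defined (here $N-1 \ge 1$ is guaranteed by $N \ge 2$). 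In parallel, \eqref{eq:recurrence-Zint} of Proposition~\ref{prop:Zgamma-diffbty} yields $\bigl(\tfrac{\mathrm d}{\mathrm dt}+A\bigr)\tfrac{\mathrm d^n}{\mathrm dt^n}Z_N(t \mid t_0) = \tfrac{\mathrm d^n}{\mathrm dt^n}Z_{N-1}(t \mid t_0)$. Adding the two identities and reading off the right-hand side through the definition~\eqref{eq:def-Zt0xi} applied at level $N-1$ then gives~\eqref{eq:recurrence-Z0-SP}.

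Finally, the ``in particular'' statement on $\bbR$ follows most directly by invoking Proposition~\ref{prop:Zgamma-diffbty} with $t_0 = -\infty$ (so that $Z_N(\,\cdot \mid -\infty) = Z_N$), $\gamma = N$ and again ``$N$'' taken to be $N-1$: the resulting instance of~\eqref{eq:recurrence-Zint} is exactly the claimed identity for $n \in \{0,\dots,N-2\}$ and $t \in \bbR$. Equivalently, one may specialize the first part to $\bm\xi = \mathbf Z_N(t_0)$ and use~\eqref{eq:infinite-time-process-as-finite-time-process}, observing that evaluating~\eqref{eq:recurrence-Zint} at $t = t_0$ identifies the shifted datum $(\xi_{k+1}+A\xi_k)_{k=0}^{N-2}$ with $\mathbf Z_{N-1}(t_0)$. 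I expect the only real difficulty to be bookkeeping: correctly reindexing the genericity parameter of Proposition~\ref{prop:Zgamma-diffbty} so that the single assumption ``$\gamma_0 = 1$'' simultaneously supplies the $N-1$ derivatives of $Z_N$ and the well-definedness of the lower-order process, whereas the membership of $\tfrac{\mathrm d^n}{\mathrm dt^n}Z_N(t \mid t_0)$ in $\dom{A}$ needed to apply $A$ is already built into~\eqref{eq:recurrence-Zint} and requires no separate verification.
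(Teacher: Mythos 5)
Your proof is correct and is essentially the paper's own argument: the paper states the corollary precisely as the result of combining Propositions~\ref{prop:Zgamma-diffbty}, \ref{prop:relation-ZN-initialvalue} and~\ref{prop:recurrence-zeta}, which is exactly the linearity-based splitting $Z_N(t \mid t_0, \bm\xi) = \zeta_N(t \mid t_0)\bm\xi + Z_N(t \mid t_0)$ that you carry out, with your direct route for the statement on $\bbR$ (taking $t_0 = -\infty$ in Proposition~\ref{prop:Zgamma-diffbty}) and your alternative via~\eqref{eq:infinite-time-process-as-finite-time-process} both being legitimate readings of that combination. The only unstated bookkeeping is that Assumption~\ref{ass:HAQ}\ref{ass:HAQ:1} for $\gamma_0 = 1$ also supplies it for $\gamma_0 = N$ and $\gamma_0 = N-1$ (needed for the well-definedness and mean-square continuity of $Z_N(t \mid t_0)$ and $Z_{N-1}(t \mid t_0)$), which is exactly Lemma~\ref{lem:intb-assumption-nestedness}.
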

\begin{remark}\label{rem:relation-ZN-IVP}
	Corollary~\ref{cor:recurrence-} can be interpreted as saying
	that $(Z_{N}(t \mid t_0, \bm\xi))_{t \in [t_0, \infty)}$ for $N \in \{2,3,\dots\}$ solves 
	the $L^2(\Omega, \cF, \bbP; U)$-valued 
	initial value problem
	\begin{equation}
		\left\lbrace
		\begin{aligned}
			\left(\tfrac{\mathrm d}{\mathrm dt} + A\right) X(t) 
			&= Z_{N-1}(t \mid t_0, (\xi_{k+1} + A\xi_{k})_{k=0}^{N-2}), 
			\quad \forall t \in (t_0, \infty), \\
			X(t_0) &= \xi_0,
		\end{aligned}
		\right.
	\end{equation}
	whenever $\xi_k \in L^2(\Omega, \cF_{t_0}, \bbP; \mathsf D(A))$ for $k \in \{0,\dots,N-2\}$. 
	This observation 
	is the key to the proofs of Propositions~\ref{prop:relation-ZN-initialvalue} and~\ref{prop:restart} below.
	It is also of interest for computational methods, as it
	implies that the computation of $Z_{N}(t \mid t_0, \bm\xi)$ amounts to solving 
	a first-order problem $N-1$ times. In fact, inductively applying this result 
	and the fact that 
	$(\tfrac{\mathrm d}{\mathrm dt} + A) \zeta_{1}(t \mid t_0) \eta 
	= 
	0$
	for any $\eta \in L^2(\Omega, \cF_{t_0},\bbP; U)$,
	we see that
	for $N \in \N$ 
	we may interpret $(Z_{N}(t \mid t_0, \bm\xi))_{t \in [t_0, \infty)}$ as the mild solution
	to the $N$th order initial value problem
	\begin{equation}
		\left\lbrace
		\begin{aligned}
			\left(\tfrac{\mathrm d}{\mathrm dt} + A\right)^{N} X(t) &= \dot W^Q(t), &&\forall t \in (t_0, \infty), \\
			\tfrac{\mathrm d^k}{\mathrm dt^k} X(t_0) &= \xi_k, &&\forall k \in \{0, \dots, N-1\}.
		\end{aligned}
		\right.
	\end{equation}
\end{remark}
Another key step in the proof of in Theorem~\ref{thm:Nple-Markov-ZN} 
is given by the following result, which essentially amounts to
the fact that $(T_{s,t})_{s \le t}$ satisfies~\ref{item:TO3}.
\begin{proposition}\label{prop:restart}
	Let $N \in \N$, $t_0 \in \bbR$ and $\bm\xi \in L^2(\Omega, \cF_{t_0},\bbP; U^{N})$
	be given.
	Let Assumptions~\textup{\ref{ass:standing} and~\ref{ass:HAQ}\ref{ass:HAQ:2}} hold
	and suppose that Assumption~\textup{\ref{ass:HAQ}\ref{ass:HAQ:1}} is satisfied for $\gamma_0 = 1$.
	The stochastic process $(Z_N(t \mid t_0, \bm{\xi}))_{t\in[t_0,\infty)}$ from~\eqref{eq:def-Zt0xi}
	has
	the \emph{$N$-ple Chapman--Kolmogorov property}, i.e.,
	for all $t_0 \le s \le t$ we have
	\begin{equation}\label{eq:Nple-restarting}
		\mathbf Z_N(t \mid t_0, \bm{\xi}) 
		= 
		\mathbf Z_N(t \mid s, \mathbf Z_N(s \mid t_0, \bm\xi) ),
		\quad 
		\bbP\text{-a.s.}
	\end{equation}
\end{proposition}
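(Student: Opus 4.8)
The plan is to reduce the vector identity~\eqref{eq:Nple-restarting} to its zeroth component and prove the latter by induction on $N$. Concretely, I would first establish

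\begin{equation}\label{eq:restart-scalar}
	Z_N(t \mid t_0, \bm\xi) = Z_N(t \mid s, \mathbf Z_N(s \mid t_0, \bm\xi)),
	\quad
	t_0 \le s \le t, \quad \bbP\text{-a.s.},
\end{equation}

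and recover~\eqref{eq:Nple-restarting} by differentiating in $t$: both sides of~\eqref{eq:restart-scalar} are mean-square $(N-1)$-times differentiable on $[s,\infty)$ by Proposition~\ref{prop:Zgamma-diffbty} and Corollary~\ref{cor:recurrence-}, so if they agree as $L^2(\Omega;U)$-valued functions of $t$, their mean-square derivatives up to order $N-1$ agree as well. Note that $\bm\eta := \mathbf Z_N(s\mid t_0,\bm\xi)$ is $\cF_s$-measurable (by adaptedness of the stochastic convolution and $\cF_{t_0}\subseteq\cF_s$), so the restarted process $Z_N(\,\cdot \mid s,\bm\eta)$ is well defined; moreover its lower components $\eta_k$ with $k\le N-2$ lie in $\mathsf D(A)$ by the recurrence~\eqref{eq:recurrence-Z0-SP}, so Corollary~\ref{cor:recurrence-} applies to it.

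The induction is on $N$. For the base case $N=1$, one has $Z_1(t\mid t_0,\xi) = S(t-t_0)\xi + \int_{t_0}^t S(t-r)\rd W^Q(r)$; splitting the stochastic integral at $s$, using the semigroup law $S(t-s)S(s-r)=S(t-r)$ and pulling the bounded operator $S(t-s)$ through the stochastic integral over $[t_0,s]$ (cf.~\cite[Proposition~4.30]{DaPrato2014}) yields~\eqref{eq:restart-scalar}, i.e.\ the familiar restarting property of the Ornstein--Uhlenbeck process. For the inductive step I would assume~\eqref{eq:restart-scalar} for $N-1$ and all admissible data, and set $Y(t):=Z_N(t\mid t_0,\bm\xi)$, $\widehat Y(t):=Z_N(t\mid s,\bm\eta)$ on $[s,\infty)$. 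The initial data match, $\mathbf Y(s)=\bm\eta=\widehat{\mathbf Y}(s)$, because $\mathbf Z_N(s\mid s,\bm\eta)=\bm\eta$ by the initialization property of $Z_N(\,\cdot\mid s,\bm\eta)$ established in Proposition~\ref{prop:relation-ZN-initialvalue} (which follows from~\eqref{eq:def-zetaN} and the vanishing at $s$ of the first $N-1$ derivatives of $Z_N(\,\cdot\mid s)$, see~\eqref{eq:mean-square-deriv-Zbetaj}).

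Applying $\bigl(\tfrac{\mathrm d}{\mathrm dt}+A\bigr)$ to $Y$ and $\widehat Y$ via Corollary~\ref{cor:recurrence-} produces the forcing terms $Z_{N-1}(t\mid t_0,\bm\mu)$ and $Z_{N-1}(t\mid s,(\eta_{k+1}+A\eta_k)_{k=0}^{N-2})$, with $\bm\mu:=(\xi_{k+1}+A\xi_k)_{k=0}^{N-2}$. These coincide: evaluating~\eqref{eq:recurrence-Z0-SP} at $t=s$ gives $Z_{N-1}^{(k)}(s\mid t_0,\bm\mu)=\eta_{k+1}+A\eta_k$ for $k=0,\dots,N-2$, i.e.\ $(\eta_{k+1}+A\eta_k)_{k=0}^{N-2}=\mathbf Z_{N-1}(s\mid t_0,\bm\mu)$, and the inductive hypothesis for $N-1$ then equates the two forcings. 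Consequently $D:=Y-\widehat Y$ satisfies $\bigl(\tfrac{\mathrm d}{\mathrm dt}+A\bigr)D=0$ on $(s,\infty)$ with $D(s)=0$ and $D(r)\in\mathsf D(A)$; since $S$ commutes with $A$ on $\mathsf D(A)$, the map $r\mapsto S(t-r)D(r)$ is constant on $[s,t]$, whence $D(t)=S(t-s)D(s)=0$, which is~\eqref{eq:restart-scalar}.

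I expect the main obstacle to be the bookkeeping at the restart point, namely verifying that the datum $\bm\eta$ generated at $s$ feeds correctly into the $(N-1)$-level recurrence; this is precisely the identity $(\eta_{k+1}+A\eta_k)_{k=0}^{N-2}=\mathbf Z_{N-1}(s\mid t_0,\bm\mu)$ obtained from~\eqref{eq:recurrence-Z0-SP} at $t=s$, and getting the index shifts right is the delicate part. A secondary technical point is that Corollary~\ref{cor:recurrence-} requires $\xi_k\in L^2(\Omega;\mathsf D(A))$ for $k\le N-2$, whereas Proposition~\ref{prop:restart} permits arbitrary $\bm\xi\in L^2(\Omega,\cF_{t_0},\bbP;U^N)$. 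Since $\overline\Gamma(n,\tau A)\in\mathscr L(U)$ for $\tau>0$ by~\eqref{eq:analytic-semigroup-est}, the maps $\bm\xi\mapsto Z_N(t\mid t_0,\bm\xi)$ and $\bm\xi\mapsto\mathbf Z_N(s\mid t_0,\bm\xi)$ are affine and bounded for $s>t_0$, so I would prove~\eqref{eq:restart-scalar} first on the dense subset where the domain conditions hold and then extend to all $\bm\xi$ by continuity; the remaining case $s=t_0$ is trivial, as $\mathbf Z_N(t_0\mid t_0,\bm\xi)=\bm\xi$.
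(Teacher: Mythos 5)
Your proposal is correct and follows essentially the same route as the paper's proof: induction on $N$ with the Ornstein--Uhlenbeck base case, matching of the forcing terms via Corollary~\ref{cor:recurrence-} and the identity $(\eta_{k+1}+A\eta_k)_{k=0}^{N-2}=\mathbf Z_{N-1}(s\mid t_0,\bm\mu)$, reduction of the difference to a homogeneous abstract Cauchy problem with zero initial datum, recovery of the vector identity by taking mean-square derivatives, and a final density/continuity argument for general $\bm\xi$. The only cosmetic difference is that you prove uniqueness for the Cauchy problem by hand (constancy of $r\mapsto S(t-r)D(r)$), whereas the paper cites \cite[Chapter~4, Theorem~1.3]{Pazy1983} applied to the Bochner-space operator $\cA_\Omega$; these are the same argument in substance.
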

%

\subsubsection{Integer case; proofs}
\label{subsubsec:Markov-proofs}
As indicated in Subsection~\ref{subsubsec:Markov-results} above, 
the statements and proofs of Proposition~\ref{prop:relation-ZN-initialvalue} and Theorem~\ref{thm:Nple-Markov-ZN} 
rely on Proposition~\ref{prop:recurrence-zeta}, which we prove first.

\begin{proof}[Proof of Proposition~\ref{prop:recurrence-zeta}]
	We first make some general remarks regarding the operators $\overline\Gamma(n, tA)$
	from~\eqref{eq:def-incgamma}.
	Under Assumption~\ref{ass:HAQ}\ref{ass:HAQ:2},
	estimate~\eqref{eq:analytic-semigroup-est} implies that
	the set $\{t^j A^jS(t) : t \in (0,\infty)\} \subseteq \mathscr{L}(U)$ is uniformly bounded.
	It follows that
	$t \mapsto \overline\Gamma(n, tA)$ 
	is a strongly continuous
	function from $[0,\infty)$ to $\mathscr{L}(U)$ for any $n \in \N$,
	which at $t \in (0,\infty)$ admits a classical derivative
	satisfying the recurrence relation
	\begin{equation}\label{eq:recurrence-incompletegamma}
		\left(\tfrac{\mathrm d}{\mathrm d t} + A\right)\overline{\Gamma}(n, tA) 
		= 
		\begin{cases}
			0, &n = 1; \\
			A\overline{\Gamma}(n-1, tA), &n \in \{2,3,\dots\}.
		\end{cases}
	\end{equation} 
	To prove the proposition, we may assume $t_0 = 0$, so fix $t \in (0,\infty)$.
	For arbitrary $M \in \N$, $j \in \N_0$ and $\bm\eta \in L^2(\Omega; U^{M})$, we define
	$
	\zeta_{M,j}(t) \bm\eta
	:= 
	A^j \zeta_{M}(t \mid 0) \bm\eta.
	$
	Combining the product rule in the form 
	$(\frac{\mathrm d}{\mathrm dt} + A)(uv) = u'v + u[(\frac{\mathrm d}{\mathrm dt} + A)v]$
	with the above recurrence relation yields for $M \in \N$
	\begin{align}
		(\tfrac{\mathrm d}{\mathrm dt} &+ A)\zeta_{M,j}(t) \bm \eta
		\\&=
		\sum_{k=1}^{M-1} \frac{t^{k-1}}{(k-1)!} A^j \overline\Gamma(M-k, tA) \eta_k
		+
		\sum_{k=0}^{M-2} \frac{t^k}{k!} A^{j+1} \overline\Gamma(M-1-k, tA) \eta_k
		\notag
		\\
		&=
		\zeta_{(M-1),j}(t) (\eta_{k+1})_{k=0}^{M-1} + \zeta_{(M-1),(j+1)}(t) (\eta_{k})_{k=0}^{M-1}
		\label{eq:recurrence-Z0-SP-k0j}.
	\end{align}
	This shows in particular that~\eqref{eq:recurrence-zeta}
	holds for integers $N \ge 2$ and $n = 0$, 
	by applying~\eqref{eq:recurrence-Z0-SP-k0j} with $M = N$, $\bm\eta = \bm\xi$ and $j = 0$.
	Moreover, note that
	\[
	\left(\tfrac{\mathrm d}{\mathrm dt} + A\right) \zeta_{1,j}(t) \, \eta 
	=
	\left(\tfrac{\mathrm d}{\mathrm dt} + A\right) A^j S(t) \, \eta
	=
	0.
	\]
	Iteratively applying~\eqref{eq:recurrence-Z0-SP-k0j} and the latter identity then
	yields that $\zeta_{M,j}\bm\eta$ is $M-1$ times (mean-square) differentiable
	with an $n$th derivative of the form
	\begin{equation}\label{eq:deriv-initial-val-term}
		\zeta^{(n)}_{M,j}(t) \bm \eta 
		=
		\sum_{\ell=0}^n \sum_{m=0}^\ell C_{\ell,m} \zeta_{(M-\ell),(j+n-m)}(t) B_{\ell, m} \bm \eta,
	\end{equation}
	where $C_{\ell,m} \in \R$, $B_{\ell, m} \in \mathscr L(U^{M}; U^{M-\ell})$ and
	$\zeta_{(M-\ell),(j+n-m)} := 0$ if $M-\ell < 1$.
	In particular, $\zeta_{N}(\,\cdot \mid t_0)\bm{\xi}$ is $N-1$ times (mean-square) differentiable as claimed.
	In order to deduce that~\eqref{eq:recurrence-zeta} also holds for $n \in \{1, \dots, N-2\}$, we need
	to justify taking the $n$th derivative on both sides and commuting
	it with $A$. Since $A$ is closed,
	it suffices to verify that $\zeta_{N}'(\,\cdot \mid t_0)\bm \xi$,
	$A^j \zeta_{N}(\,\cdot \mid t_0)\bm \xi$,
	$\zeta_{N-1}(\,\cdot \mid t_0)  (\xi_{j+1})_{j=0}^{N-1}$ and $A^j \zeta_{N-1}(\,\cdot \mid t_0)  (\xi_{j})_{j=0}^{N-1}$
	admit $n$th derivatives for $j \in \{0,1\}$. Indeed,
	these assertions follow from~\eqref{eq:deriv-initial-val-term}.
\end{proof}
We can now prove Propositions~\ref{prop:relation-ZN-initialvalue} and~\ref{prop:restart}.
Note that for the proof of the latter we may use Corollary~\ref{cor:recurrence-}, since it combines
Propositions~\ref{prop:Zgamma-diffbty}, \ref{prop:relation-ZN-initialvalue} and~\ref{prop:recurrence-zeta}.
\begin{proof}[Proof of Proposition~\ref{prop:relation-ZN-initialvalue}]
	We use induction on $N \in \N$.
	For $N=1$ and $t \in (t_0, \infty)$,
	\begin{equation}
		\begin{aligned}
			\widetilde Z_1(t \mid t_0) 
			= 
			S(t-t_0) \int_{-\infty}^{t_0} S(t_0-s)\rd W^Q(s)
			=
			\zeta_1(t \mid t_0) Z_1(t_0), \quad \bbP\text{-a.s.}
		\end{aligned}
	\end{equation}
	Now suppose that the statement is true for a given $N \in \N$.
	By Proposition~\ref{prop:Zgamma-diffbty} and
	the discussion below~\eqref{eq:def-truncated-process}, 
	$Z_{N+1}$ and $\widetilde Z_{N+1}(\,\cdot \mid t_0)$ have $N$ mean-square derivatives
	which satisfy
	\begin{equation}
		\left(\tfrac{\mathrm d}{\mathrm dt} + A \right)Z_{N+1}^{(k)}(t) = Z_{N}^{(k)}(t)
		\quad 
		\text{and}
		\quad
		\left(\tfrac{\mathrm d}{\mathrm dt} + A \right) \widetilde Z_{N+1}(t \mid t_0)
		=
		\widetilde Z_{N}(t \mid t_0), \quad \bbP\text{-a.s.},
	\end{equation}
	for all 
	$k \in \{0, \dots, N-1\}$ and $t \in (t_0, \infty)$.
	Combined with Proposition~\ref{prop:recurrence-zeta}
	and the induction hypothesis, we find
	\begin{align}
		\bigl(\tfrac{\mathrm d}{\mathrm dt} &+ A \bigr) \zeta_{N+1}(t\mid t_0) \mathbf Z_{N+1}(t_0) 
		= 
		\zeta_N(t \mid t_0) \bigl[\bigl(\tfrac{\mathrm d}{\mathrm dt} + A\bigr)Z_{N+1}^{(k)}(t_0)\bigr]_{k=0}^{N-1}
		\\
		&=
		\zeta_N(t \mid t_0)\mathbf Z_{N}(t_0)
		=
		\widetilde Z_N(t \mid t_0)
		=
		\left(\tfrac{\mathrm d}{\mathrm dt} + A \right) \widetilde Z_{N+1}(t \mid t_0).
	\end{align}
	Since $\widetilde Z_{N+1}(t_0 \mid t_0) = Z_{N+1}(t_0) = \zeta_{N+1}(t_0 \mid t_0) \mathbf Z_{N+1}(t_0)$,
	we conclude that~\eqref{eq:relation-ZN-minusinfty-initialval-2}
	with $N+1$ holds on $[t_0, \infty)$ 
	by the uniqueness of solutions to $L^2(\Omega, \cF, \bbP; U)$-valued abstract 
	Cauchy problems, see~\cite[Chapter~4, Theorem~1.3]{Pazy1983}.
\end{proof}
\begin{proof}[Proof of Proposition~\ref{prop:restart}]
	Let $t_0 \le s \le t$.
	We use induction on $N\in \N$. For the base case $N=1$ we have
	\begin{equation*}\begin{aligned}
			&Z_1(t \mid s, Z_1(s \mid t_0, \xi) ) 
			= S(t-s)Z_1(s \mid t_0, \xi) + \int_{s}^t S(t-r)\rd W^Q(r) 
			\\
			&= 
			S(t-s)S(s-t_0)\xi + S(t-s)\int_{t_0}^{s} S(s-r)\rd W^Q(r) + \int_{s}^t S(t-r)\rd W^Q(r)
			\\
			&= 
			S(t-t_0)\xi + \int_{t_0}^{s} S(t-r)\rd W^Q(r) + \int_{s}^t S(t-r)\rd W^Q(r)
			=
			Z_1(t \mid t_0, \xi),
		\end{aligned}
	\end{equation*}
	$\bbP$-a.s., for $\xi \in L^2(\Omega, \cF_{t_0}, \bbP; \mathsf D(A))$.
	Now suppose that the result holds for $N \in \N$ and
	let $\bm\xi \in L^2(\Omega, \cF_{t_0}, \bbP; \mathsf D(A)^N)$.
	Then for any $t \in (s, \infty)$ we have
	\begin{align*}
		\bigl(\tfrac{\mathrm d}{\mathrm dt} &+ A\bigr) Z_{N+1} (t \mid s, \mathbf Z_{N+1}(s \mid t_0, \bm\xi))
		=
		Z_{N}\left(t \, \middle\vert \, s, \bigl[\bigl(\tfrac{\mathrm d}{\mathrm dt} + A\bigr)Z_{N+1}^{(k)}(s \mid t_0, \bm\xi)\bigr]_{k=0}^{N-1}\right)
		\\
		&=
		Z_{N}\bigl(t \bigm\vert s, {\mathbf Z}_{N}(s \mid t_0, [\xi_{k+1} + A\xi_k]_{k=0}^{N-1})\bigr)
		=
		Z_{N}(t \mid t_0, [\xi_{k+1} + A\xi_k]_{k=0}^{N-1})
		\\&=
		\bigl(\tfrac{\mathrm d}{\mathrm dt} + A\bigr) Z_{N+1} (t \mid t_0, \bm\xi), \quad \text{$\bbP$-a.s.,}
	\end{align*}
	where we applied Corollary~\ref{cor:recurrence-}
	in every identity except the third, which uses the induction hypothesis.
	Moreover, $Z_{N+1} (s \mid s, \mathbf Z_{N+1}(s \mid t_0, \bm\xi))
	=
	Z_{N+1}(s \mid t_0, \bm\xi)$ is evident from the definitions.
	Together, these facts imply that the difference process
	$Y := Z_{N+1} (\,\cdot \mid s, \mathbf Z_{N+1}(s \mid t_0, \bm\xi)) - Z_{N+1} (\,\cdot \mid t_0, \bm\xi)$
	solves the abstract Cauchy problem
	\begin{equation}
		\left\lbrace\begin{aligned}
			\left(\tfrac{\mathrm d}{\mathrm d t} + \cA_{\Omega}\right)Y(t) &= 0, \quad \forall t \in (s, \infty); \\
			Y(s) &= 0,
		\end{aligned}\right.
	\end{equation}
	where $\cA_\Omega \from L^2(\Omega; \mathsf D(A)) \subseteq L^2(\Omega; U) \to L^2(\Omega; U)$ is as in~\eqref{eq:Bochner-counterpart}.
	Since $-\cA_\Omega$ is the generator of a $C_0$-semigroup on
	$L^2(\Omega; U)$, see Lemma~\ref{lem:Bochner-space-operator}\ref{lem:Bochner-space-operator-c} in Appendix~\ref{app:frac-powers},
	the uniqueness result \cite[Chapter~4, Theorem~1.3]{Pazy1983}
	shows that $Y \equiv 0$ on $[s, \infty)$, 
	meaning that for all $t \in [s, \infty)$ we have
	\begin{equation}
		Z_{N+1}(t \mid t_0, \bm{\xi}) 
		= 
		Z_{N+1}(t \mid s, \mathbf Z_{N+1}(s \mid t_0, \bm\xi) ),
		\quad 
		\bbP\text{-a.s.}
	\end{equation}
	Taking $n$th mean-square derivatives on both sides for $n \in \{0, \dots, N\}$,
	which is justified by Corollary~\ref{cor:recurrence-}, 
	we find~\eqref{eq:Nple-restarting}. In order to establish this identity
	for general $\bm\xi \in L^2(\Omega, \cF_{t_0},\bbP; U^N)$, we employ the
	density of $\mathsf D(A)$ in $U$~\cite[Chapter~1, Corollary~2.5]{Pazy1983},
	which implies the density of $L^2(\Omega; \mathsf D(A)^N)$ in $L^2(\Omega; U^N)$,
	so that it suffices to argue that the mapping 
	$\bm \eta \mapsto \mathbf Z_{N+1}(t \mid t_0, \bm\eta)$ from $L^2(\Omega; U^N)$ to itself is continuous
	for any fixed $t \in [t_0, \infty)$.
	The continuity of $\bm\eta \mapsto \zeta_{N+1}(t \mid t_0)\bm\eta$ 
	follows from the fact that $\eta \mapsto \overline\Gamma(N+1-k, (t-t_0)) \eta$
	is bounded on $L^2(\Omega; U)$ for any $k \in \{0, \dots, N\}$.
	The same holds for the derivatives of $\zeta_{N+1}(\,\cdot \mid t_0)\bm\eta$ 
	since they are
	of the same form by Proposition~\ref{prop:recurrence-zeta}.
	Together, the conclusion follows.
\end{proof}

With these intermediate results in place, we are ready to prove the main 
theorem asserting the $N$-ple Markovianity 
of $Z_N$.
Its proof is a generalization of~\cite[Theorem~9.14]{DaPrato2014} and \cite[Theorem~9.30]{PeszatZabczyk2007},
which concern simple Markovianity for $N = 1$.
We divide it in two parts: In the first part, we verify that
$(T_{s,t})_{t_0 \le s \le t}$ is a well-defined family of transition operators on $B_b(U^N)$.
In the second part, we show that $Z_N(\,\cdot\mid t_0, \bm\xi)$ is $N$-ple Markov with respect to
$(T_{s,t})_{t_0 \le s \le t}$, see Corollary~\ref{def:Nple-markov-TO}
\begin{proof}[Proof of Theorem~\ref{thm:Nple-Markov-ZN}]
	\textbf{Step 1: Well-definedness of $(T_{s,t})_{t_0 \le s \le t}$}.
	We have to
	show that $T_{s, t}\varphi = \E[\varphi(\mathbf Z_N(t \, | \, s, \,\cdot\,))]$ is measurable 
	for $\varphi \in B_b(U^N)$.
	For a monotone class argument,
	we introduce the linear space $\mathscr H$ of bounded functions
	$\varphi \from U^N \to \bbR$ such that $\mathbf x \mapsto \E[\varphi(\mathbf Z_N(t\mid s, \mathbf x))]$ is measurable.
	Arguing similarly to the end of the proof of Proposition~\ref{prop:restart}, we find that 
	${\mathbf x \mapsto [\mathbf Z_N(t\mid s, \mathbf x)](\omega)}$
	is continuous on $U^N$ for $\bbP$-a.e.\ $\omega \in \Omega$.
	Thus, for $\varphi \in \mathscr C := C_b(U^N)$, the dominated convergence theorem implies that
	$\mathbf x \mapsto \E[\varphi(\mathbf Z_N(t\mid s, \mathbf x))]$
	is also continuous, hence (Borel) measurable.
	It follows that $\mathscr C \subseteq \mathscr H$.
	Moreover, $\mathscr H$ contains all constant functions, and given a
	sequence
	$(\varphi_n)_{n\in\N} \subseteq \mathscr H$ such that ${0 \le \varphi_n \uparrow \varphi}$ pointwise 
	for some bounded limit function
	$\varphi$, we find $\varphi \in \mathscr H$ by monotone convergence.
	Since $\mathscr C$ is closed under pointwise multiplication,
	we conclude that 
	$B_b(U^N\!, \sigma(\mathscr C)) = B_b(U^N) \subseteq \mathscr H$ by the monotone class
	theorem~\cite[Chapter~0, Theorem~2.2]{RevuzYor1999}.
	
	\textbf{Step 2: $N$-ple Markovianity}.
	For $t_0 \le s \le t$ and $\varphi \in B_b\left(U^N\right)$, 
	we show
	\begin{equation}
		\E[\varphi(\mathbf Z_N(t\mid t_0, \bm\xi)) \mid \cF_{s}] 
		= 
		T_{s, t} \varphi(\mathbf Z_N(s \mid t_0, \bm\xi)),
		\quad
		\bbP\text{-a.s.},
	\end{equation}
	for all $\bm \xi \in L^2(\Omega, \cF_{s}, \bbP; U^N)$.
	By Proposition~\ref{prop:restart}, it suffices to 
	verify that 
	\begin{equation}
		\E[\varphi(\mathbf Z_N(t\mid s, \bm \xi)) \mid \cF_{s}] 
		= 
		T_{s, t} \varphi(\bm\xi),
		\quad
		\bbP\text{-a.s.}
	\end{equation}
	By a monotone
	class argument similar to that of Step 1, 
	it suffices to consider $\varphi \in C_b(U^N)$.
	If $\bm\xi = \sum_{j=1}^n \mathbf x_j \mathbf 1_{A_j}$ with $n \in \N$,
	$\{\mathbf x_1, \dots, \mathbf x_n\} \subseteq U^N$ and
	disjoint events
	$\{A_1, \dots, A_n\} \subseteq \cF_{s}$ covering $\Omega$,
	then
	\[
	\mathbf Z_N(t\mid s, \bm \xi)
	=
	\sum\nolimits_{j=1}^n \mathbf Z_N(t \mid s, \mathbf x_j) \mathbf 1_{A_j}, \quad \bbP\text{-a.s.}
	\]
	For every $j \in \{1,\dots,n\}$, $\zeta_N(t\,|\,s)\mathbf x_j$
	is deterministic, 
	whereas $Z_N(t \, | \, s, \bm 0)$ is independent of
	$\cF_{s}$ by~\eqref{item:WP3prime} and the definition of the stochastic integral,
	thus $Z_N(t \mid s, \mathbf x_k) \ind \cF_{s}$.
	Since the mean-square derivatives of ${Z_N(\,\cdot\mid s, \mathbf x_k)}$ have the same form
	(see Proposition~\ref{prop:recurrence-zeta}), 
	we deduce
	$\mathbf Z_N(t \mid s, \mathbf x_k) \ind \cF_{s}$, so that
	\begin{equation}
		\begin{aligned}
			\E&[\varphi(\mathbf Z_N(t\mid s, \bm \xi)) \mid \cF_{s}]
			=
			\sum\nolimits_{k=1}^n 
			\E[\varphi(\mathbf Z_N(t\mid s, \mathbf x_k)) \mathbf 1_{A_k} \mid \cF_{s}] 
			\\
			&=
			\sum\nolimits_{k=1}^n 
			\E[\varphi(\mathbf Z_N(t\mid s, \mathbf x_k))] \mathbf 1_{A_k}
			=
			\E[\varphi(\mathbf Z_N(t\mid s, \bm\xi))]
			=
			[T_{s, t}\varphi](\bm\xi),
			\quad
			\bbP\text{-a.s.}
		\end{aligned}
	\end{equation}
	This shows the desired property for simple $\bm\xi$.
	For a general $\bm \xi \in L^2(\Omega, \cF_{s}, \bbP; U^N)$, 
	we can find a sequence $(\bm \xi_n)_{n\in\N}$ of 
	$\cF_{s}$-measurable, $U^N$-valued simple 
	random variables such that $\bm\xi_n \to \bm\xi$ in $L^2(\Omega; U^N)$.
	By the continuity of the mapping
	$\bm\xi \mapsto \mathbf Z_N(t\mid s, \bm\xi)$ on $L^2(\Omega; U^N)$
	and of $\varphi$,
	we have ${\varphi(\mathbf Z_N(t\mid s, \bm\xi_n)) \to \varphi(\mathbf Z_N(t\mid s, \bm\xi))}$
	in $L^2(\Omega; \R)$. From this, one can derive that
	\[
	\E[\varphi(\mathbf Z_N(t\mid s, \bm \xi_n)) \mid \cF_{s}](\omega) \to \E[\varphi(\mathbf Z_N(t\mid s, \bm \xi)) \mid \cF_{s}](\omega),
	\]
	$\bbP$-a.s., passing to a subsequence if needed.
	On the other hand, we can pass to a further subsequence
	in order to assume that $\bm \xi_n(\omega) \to \bm \xi(\omega)$ in $U^N$, $\bbP$-a.s., 
	and deduce that $[T_{s, t}\varphi](\bm\xi_n(\omega)) \to [T_{s, t}\varphi](\bm\xi(\omega))$ in $\bbR$,
	finishing the density argument.
	
	Finally, the statement regarding $(Z_N(t))_{t\in\bbR}$ follows from Proposition~\ref{prop:relation-ZN-initialvalue}.
\end{proof}

\subsubsection{Non-Markovianity in the fractional case}
\label{subsec:not-Markov-fractional}
We conclude this section by showing how 
Theorem~\ref{thm:weakMarkov-Gaussian-necessity} can be used
to deduce that $Z_\gamma$ is not weakly Markov (see Definition~\ref{def:weak-markov})
if $\gamma \not\in\N$. To this end, we determine
the coloring operator of $Z_\gamma$.

\begin{proposition}
	Let
	Assumption~\textup{\ref{ass:standing}} be
	satisfied and suppose that $\gamma \in (\nicefrac{1}{2},\infty)$
	is such that Assumption~\textup{\ref{ass:HAQ}\ref{ass:HAQ:1}} 
	holds for $\gamma_0 = \gamma$.
	Then
	the coloring operator of
	$Z_\gamma$, see~\eqref{eq:X-colored-by-Linverse}, is given by 
	$\cL^{-1}_\gamma = \mathfrak I^{\gamma} \cQ_{\bbR}^\frac{1}{2}  \in \LO(L^2(\bbR; U))$,
	where $\mathfrak I^\gamma$ is as in~\eqref{eq:def-Igamma}.
\end{proposition}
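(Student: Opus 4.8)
The plan is to verify the defining relation \eqref{eq:X-colored-by-Linverse} directly. Since $Z_\gamma$ is a stochastic integral of deterministic integrands against the Gaussian process $W^Q$, the family $(\langle Z_\gamma,\phi\rangle_{L^2(\bbR;U)})_{\phi\in C_c^\infty(\bbR;U)}$ is centered Gaussian, so it suffices to compute $\langle Z_\gamma,\phi\rangle_{L^2(\bbR;U)}$ as a single stochastic integral and match its covariance with that of $\mathscr W(\cQ_\bbR^{\frac12}\mathfrak I^{\gamma*}\phi)$, which equals $\mathscr W(\cL_\gamma^{-*}\phi)$ for the claimed $\cL_\gamma^{-1}=\mathfrak I^\gamma\cQ_\bbR^{\frac12}$ by self-adjointness of $\cQ_\bbR^{\frac12}$.

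First I would unfold $\langle Z_\gamma,\phi\rangle_{L^2(\bbR;U)}=\int_\bbR\langle Z_\gamma(t),\phi(t)\rangle_U\,\rd t$ and rewrite the pairing of the $U$-valued stochastic integral $Z_\gamma(t)$ with the fixed vector $\phi(t)$ as a scalar stochastic integral by moving $\phi(t)$ through the adjoint of the operator-valued integrand: setting $\Psi(t,s)u:=\Gamma(\gamma)^{-1}\mathbf 1_{(-\infty,t]}(s)(t-s)^{\gamma-1}\langle u,S(t-s)^*\phi(t)\rangle_U$, one checks on simple integrands that $\langle Z_\gamma(t),\phi(t)\rangle_U=\int_\bbR\Psi(t,s)\,\rd W^Q(s)$. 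Next I would interchange the deterministic $t$-integral with the stochastic $s$-integral by the stochastic Fubini theorem (Theorem~\ref{thm:stoch-fub}), collapsing the double integral into $\int_\bbR\bigl(\int_\bbR\Psi(t,s)\,\rd t\bigr)\rd W^Q(s)$; evaluating the inner integral and comparing with formula \eqref{eq:Igamma-adjoint} identifies it as $\langle\,\cdot\,,\mathfrak I^{\gamma*}\phi(s)\rangle_U$, so that $\langle Z_\gamma,\phi\rangle_{L^2(\bbR;U)}=\int_\bbR\langle\,\cdot\,,\mathfrak I^{\gamma*}\phi(s)\rangle_U\,\rd W^Q(s)$.

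The main obstacle is verifying the hypothesis of Theorem~\ref{thm:stoch-fub}, namely $\Psi Q^{\frac12}\in L^{1,2}(\bbR\times\bbR;\LO_2(U;\bbR))$ with $S=\bbR$ carrying Lebesgue measure. Here I would use $\|\Psi(t,s)Q^{\frac12}\|_{\LO_2(U;\bbR)}=\Gamma(\gamma)^{-1}\mathbf 1_{(-\infty,t]}(s)(t-s)^{\gamma-1}\|Q^{\frac12}S(t-s)^*\phi(t)\|_U$ together with the bound $\|Q^{\frac12}S(r)^*x\|_U\le\|S(r)Q^{\frac12}\|_{\LO(U)}\|x\|_U\le\|S(r)Q^{\frac12}\|_{\LO_2(U)}\|x\|_U$ to obtain, after the substitution $r:=t-s$, the estimate $\int_\bbR\|\Psi(t,s)Q^{\frac12}\|^2_{\LO_2(U;\bbR)}\,\rd s\le\Gamma(\gamma)^{-2}\|\phi(t)\|_U^2\int_0^\infty\|r^{\gamma-1}S(r)Q^{\frac12}\|^2_{\LO_2(U)}\,\rd r$. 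The last integral is finite precisely by Assumption~\ref{ass:HAQ}\ref{ass:HAQ:1} with $\gamma_0=\gamma$, and integrating the square root of the right-hand side over $t$ is finite because $\phi$ is continuous with compact support; this is the one place where Assumption~\ref{ass:HAQ}\ref{ass:HAQ:1} is essential.

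Finally I would close the argument via the It\^o isometry \eqref{eq:ito-R}. For $\psi\in L^2(\bbR;U)$ the scalar integral $J(\psi):=\int_\bbR\langle\,\cdot\,,\psi(s)\rangle_U\,\rd W^Q(s)$ satisfies $\langle J(\psi_1),J(\psi_2)\rangle_{L^2(\Omega)}=\langle\cQ_\bbR^{\frac12}\psi_1,\cQ_\bbR^{\frac12}\psi_2\rangle_{L^2(\bbR;U)}$ by polarization, which by \eqref{eq:isonormal} means that $J(\psi)$ and $\mathscr W(\cQ_\bbR^{\frac12}\psi)$ share the same centered Gaussian covariance structure. Applying this with $\psi=\mathfrak I^{\gamma*}\phi$ yields $\langle Z_\gamma,\phi\rangle_{L^2(\bbR;U)}\overset{d}{=}\mathscr W(\cQ_\bbR^{\frac12}\mathfrak I^{\gamma*}\phi)=\mathscr W((\mathfrak I^\gamma\cQ_\bbR^{\frac12})^*\phi)$, which is exactly \eqref{eq:X-colored-by-Linverse} with $\cL_\gamma^{-1}=\mathfrak I^\gamma\cQ_\bbR^{\frac12}$. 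Boundedness of this operator on $L^2(\bbR;U)$ is immediate, since $\mathfrak I^\gamma\in\LO(L^2(\bbR;U))$ with $\norm{\mathfrak I^\gamma}{\LO(L^2(\bbR;U))}\le M_0 w^{-\gamma}$ and $\cQ_\bbR^{\frac12}$ is the bounded, self-adjoint pointwise multiplier by $Q^{\frac12}\in\LO(U)$.
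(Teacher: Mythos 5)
Your proposal is correct and follows essentially the same route as the paper's own proof: the same rank-one kernel representation of $\langle Z_\gamma,\phi\rangle_{L^2(\bbR;U)}$, the same $L^{1,2}$ estimate (via Assumption~\ref{ass:HAQ}\ref{ass:HAQ:1} and the compact support of $\phi$) to justify Theorem~\ref{thm:stoch-fub}, the same identification of the inner integral with $\mathfrak I^{\gamma*}\phi$ through~\eqref{eq:Igamma-adjoint}, and the same conclusion by the It\^o isometry and polarization, using $\cL_\gamma^{-*}=\cQ_\bbR^{\frac{1}{2}}\mathfrak I^{\gamma*}$. The only addition beyond the paper's proof is your explicit remark on the boundedness of $\mathfrak I^\gamma\cQ_\bbR^{\frac{1}{2}}$, which the paper handles earlier in Subsection~\ref{subsec:fractional-int-diff}.
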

\begin{proof}
	Note that for each $f \in C_c^\infty(\bbR; U)$, we have
	\begin{align}
		\scalar{Z_\gamma, f}{L^2(\bbR; U)}
		&=
		\frac{1}{\Gamma(\gamma)} \int_\bbR \int_{-\infty}^t \scalar{(t-s)^{\gamma-1}S(t-s) \rd W^Q(s), f(t)}{U} \rd t, 
		\\
		&= 
		\int_\bbR \int_\bbR \widetilde \Psi_f(t,s) \rd W^Q(s) \rd t,
		\quad \bbP\text{-a.s.},
	\end{align}
	where $\widetilde \Psi_f \from \bbR^2 \to \LO(U; \bbR)$ is given by 
	\begin{equation}
		\widetilde \Psi_f(t,s)u
		:= 
		\frac{1}{\Gamma(\gamma)}\scalar{(t-s)^{\gamma-1} S(t-s) u, f(t)}{U} \, \mathbf 1_{(-\infty, t)}(s),
		\quad 
		t, s \in \bbR,\,  u \in U.
	\end{equation}
	Indeed, the latter integral is well-defined since
	\begin{align*}
		\int_\bbR &\biggl[\int_\bbR \Norm{\widetilde \Psi_f(t,s) Q^{\frac{1}{2}}}{\LO_2(U; \bbR)}^2 \rd s\biggr]^{\frac{1}{2}} \rd t
		\\
		&=
		\frac{1}{\Gamma(\gamma)}
		\int_\bbR \biggl[\int_{-\infty}^t \norm{(t-s)^{\gamma-1} Q^\frac{1}{2}[S(t-s)]^* f(t)}{U}^2 \rd s\biggr]^{\frac{1}{2}} \rd t 
		\\
		&\le 
		\frac{1}{\Gamma(\gamma)}
		\biggl[\int_0^\infty \norm{s^{\gamma-1} Q^\frac{1}{2}[S(s)]^*}{\LO_2(U)}^2 \rd s\biggr]^{\frac{1}{2}}
		\biggl[\int_\bbR \norm{f(t)}{U}  \rd t\biggr]  < \infty;
	\end{align*}
	note that the first integral on the last line is finite by Assumption~\ref{ass:HAQ}\ref{ass:HAQ:1} 
	and the second by the compact support of $f$.
	This also shows that Theorem~\ref{thm:stoch-fub} is applicable, hence
	\begin{align}
		\scalar{Z_\gamma, f}{L^2(\bbR; U)}
		&=
		\int_\bbR \int_\bbR \widetilde \Psi_f(t,s) \rd t \rd W^Q(s)
		=
		\int_\bbR \widetilde \Phi_f(s) \rd W^Q(s), \quad \bbP\text{-a.s.},
	\end{align}
	where $\widetilde \Phi_f \from \bbR \to \LO(U; \bbR)$ is given by 
	\begin{align}
		\widetilde \Phi_f(s)u 
		:= 
		\int_\bbR \widetilde \Psi_f(t,s) u \rd t
		&=
		\frac{1}{\Gamma(\gamma)}
		\int_{s}^\infty
		\scalar{(t-s)^{\gamma-1} S(t-s) u, f(t)}{U}\rd t
		\\
		&=
		\scalar{u, \mathfrak I^{\gamma*} f(s)}{U},
		\quad 
		\forall
		s \in \bbR,\,  u \in U,
	\end{align}
	where we recall equation~\eqref{eq:Igamma-adjoint} for the adjoint of $\mathfrak I^\gamma$.
	Consequently, the It\^o isometry~\eqref{eq:ito-R} yields
	\begin{equation}
		\norm{\scalar{Z_\gamma, f}{L^2(\bbR; U)}}{L^2(\Omega)}^2
		=
		\int_\bbR \norm{\widetilde \Phi_f(t) Q^\frac{1}{2}}{\LO_2(U; \bbR)}^2 \rd t
		=
		\norm{\cQ_\bbR^\frac{1}{2} \mathfrak I^{\gamma*} f}{L^2(\bbR; U)}^2.
	\end{equation}
	An application of the polarization identity then shows
	the coloring property~\eqref{eq:X-colored-by-Linverse}
	with $\cL^{-*}_\gamma = \cQ_\bbR^\frac{1}{2} \mathfrak I^{\gamma*}$.
\end{proof}

\begin{example}\label{ex:non-Markovianity}
	Let
	Assumptions~\textup{\ref{ass:standing} and~\ref{ass:HAQ}\ref{ass:HAQ:2}} be
	satisfied and suppose that $\gamma \in (\nicefrac{1}{2},\infty)$
	is such that Assumption~\textup{\ref{ass:HAQ}\ref{ass:HAQ:1}} 
	holds for $\gamma_0 = \gamma$.
	The latter implies that
	$\partial_t + \cA_\bbR = \cB$, and we always have
	$\cB^{-\gamma} = \mathfrak I^\gamma$, see Subsection~\ref{subsec:fractional-int-diff}. Thus,
	\[
	\cL_\gamma^* \cL_\gamma 
	=
	\bigl(\cQ_{\bbR}^{-\frac{1}{2}} \cB^{\gamma} \bigr)^* \cQ_{\bbR}^{-\frac{1}{2}} \cB^{\gamma} 
	=
	\cB^{\gamma*} \cQ_{\bbR}^{-1} \cB^{\gamma} 
	=
	(\partial_t + \cA_\bbR)^{\gamma*} \cQ_{\bbR}^{-1} (\partial_t + \cA_\bbR)^{\gamma}.
	\]
	Moreover, this assumption implies that $\dom{A^n}$ is dense in $U$ for all
	$n \in \N$ by \cite[Chapter~2, Theorem~6.8(c)]{Pazy1983}; choosing
	$n$ large enough, we also find that $C_c^\infty(\bbT; \dom{A^n})$ is dense in $\dom{\cB^\gamma}$,
	so we can take $F = \dom{A^n}$ in Theorem~\ref{thm:weakMarkov-Gaussian-necessity}.
	
	Although $Q^{-1}$ 
	may be a nonlocal spatial operator, $\cQ_\bbR^{-1}$ is always local in time.
	Thus for $\gamma \in \N$, the precision operator
	is local as a composition of three local operators, which is in accordance with
	the Markovianity shown in Subsection~\ref{subsec:fractional-int-diff:Markov}.
	
	For $\gamma \not \in \N$, we will show that the precision operator is not local in general.
	Suppose that $A$ has an eigenvector $v \in U$ with corresponding eigenvalue $\lambda \in \R$.
	Such eigenpairs exists for example if $A = (\kappa^2 - \Delta)^\beta$ 
	with $\kappa, \beta \in (0,\infty)$ and $\Delta$ the Dirichlet Laplacian on a bounded Euclidean
	domain $\cD \subsetneq \bbR^d$.
	If we moreover assume that $v \in \dom{Q^{-\frac{1}{2}}}$, then we find
	\begin{equation}
		\forall \phi \in C_c^\infty(\R) : \quad 
		\cQ_{\bbR}^{-\frac{1}{2}} \cB^\gamma (\phi \otimes v) = [(\partial_t + \lambda)^\gamma \phi] \otimes Q^{-\frac{1}{2}}v
	\end{equation}
	since the spectral mapping theorem implies $S(t)v = e^{-\lambda t}v$ for all $t \in [0,\infty)$.
	It thus suffices to consider the case $A = \lambda \in \bbR$, i.e., 
	we wish to find disjointly supported $\phi, \psi \in C_c^\infty(\bbR)$ such that
	\begin{equation}\label{eq:Fgamma}
		F_\gamma(\phi, \psi) :=
		|\scalar{(\partial_t + \lambda)^\gamma \phi, (\partial_t + \lambda)^\gamma \psi}{L^2(\R)}|
		\neq 0.
	\end{equation}
	We will discuss this by means of a numerical experiment for the case $\lambda = 1$,
	using the following smooth function $\phi \in C_c^\infty(\bbR)$ supported on $[-1,1]$:
	\begin{equation}\label{eq:bump}
		\phi(t) := \begin{cases}
			\exp\left(-\frac{1}{1-x^2}\right)\!, &x \in (-1, 1), \\
			0, &x \in \R\setminus (-1,1),
		\end{cases}
	\end{equation}
	and taking $\psi := \phi( \,\cdot\, - 2 - \delta)$ for some $\delta \in (0,\infty)$.
	In Figure~\ref{fig:frac-parab-derivs-plots}, 
	we see that the parabolic derivatives of $\phi$ consists of
	(positive or negative) peaks.
	For $\gamma \not\in \N$, the support of the last of these peaks appears to
	include the whole of $[1,\infty)$,
	with its absolute value
	taking rapidly decaying yet nonzero values there.
	Therefore, the idea is to take $\delta$ small enough,
	making the right-hand side tail of $\phi$ overlap with
	the first peak of $\psi$ to obtain a non-zero $L^2(\bbR)$-inner product.
	Table~\ref{tab:inner-prods} shows the approximate outcomes 
	of this process for various values of $\gamma$ and $\delta$, using symbolic differentiation and 
	numerical integration.
	
	\begin{table}
		\small
		\centering
		\begin{tabular}{r|cccccccccccc}
			\toprule
			\diagbox{$\delta$}{$\gamma$} & 0.25 & 0.50 & 0.75 & 1 & 1.25 & 1.50 & 1.75 & 2 & 2.25 & 2.50 & 2.75 & 3 \\
			\midrule
			$10^{-1}$ & 0.004 & 0.007 & 0.007 & 0 & 0.016 & 0.042 & 0.059 & 0 & 0.298 & 1.078 & 2.111 & 0 \\
			$10^{-2}$ & 0.005 & 0.009 & 0.009 & 0 & 0.024 & 0.065 & 0.098 & 0 & 0.622 & 2.568 & 5.829 & 0 \\
			$10^{-3}$ & 0.005 & 0.009 & 0.009 & 0 & 0.025 & 0.068 & 0.104 & 0 & 0.678 & 2.850 & 6.601 & 0 \\
			\bottomrule
		\end{tabular}%
		\caption{\label{tab:inner-prods} Numerically approximated values of 
			$F_\gamma(\phi, \psi)$, see~\eqref{eq:Fgamma},
			with $\phi$ 
			from~\eqref{eq:bump} and $\psi := \phi(\,\cdot\, - 2 - \delta)$
			for certain values of $\gamma$ and $\delta$.}
	\end{table}%
	
	\begin{figure}
		\centering
		\includegraphics[width=.8\textwidth]{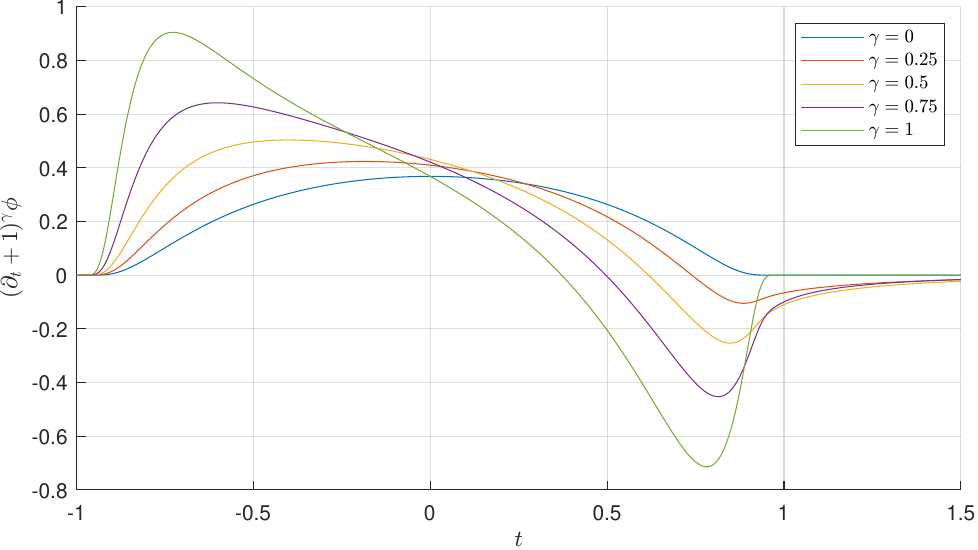}
		\includegraphics[width=.8\textwidth]{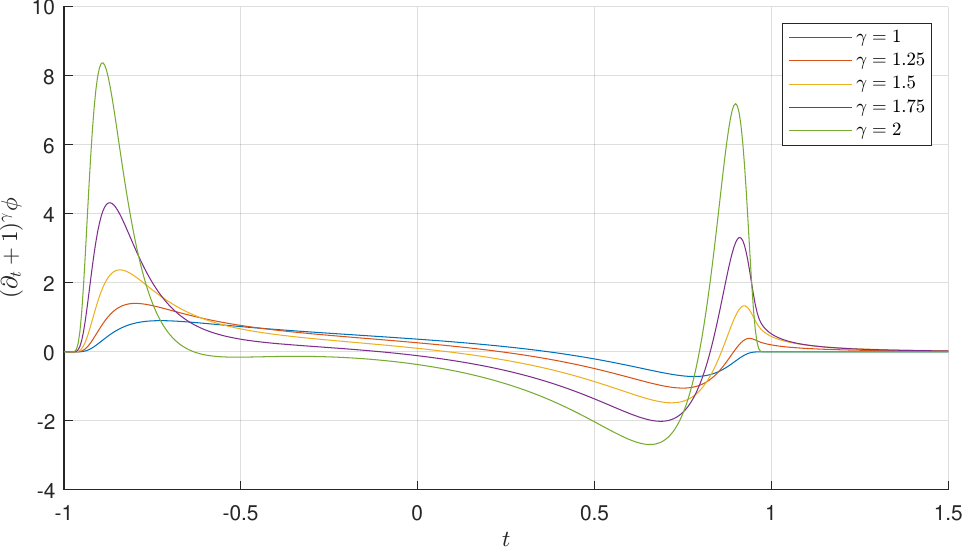}
		\caption{Graphs of the fractional parabolic derivative $(\partial_t + 1)^\gamma \phi$ with $\phi$ 
			from~\eqref{eq:bump} for certain values of $\gamma \in [0,2]$.
			Note the different scales on the $y$-axes.}
		\label{fig:frac-parab-derivs-plots}
	\end{figure}
	
	Note the contrast with the merely spatial Mat\'ern case, where the self-adjointness of
	the shifted Laplacian $\kappa^2 - \Delta$
	causes $L_\beta^* L_\beta = \tau^2 (\kappa^2 - \Delta)^{2\beta}$, thus 
	we find
	a weak Markov property also for half-integer values of $\beta \in (0,\infty)$.
\end{example}

\section{Fractional $Q$-Wiener process}
\label{sec:frac-QWiener}
In this section we consider the \emph{fractional $Q$-Wiener process}, 
which is a $U$-valued analog to 
fractional Brownian motion; 
we define it as in~\cite[Definition~2.1]{Duncan2002}.
\begin{definition}\label{def:fQWP}
	Let $Q \in \LO_1^+(U)$.
	A $U$-valued Gaussian process $(W^Q_H(t))_{t\in\R}$ is called a \emph{fractional $Q$-Wiener process
		with Hurst parameter $H \in (0, 1)$} if
	\begin{enumerate}[label=(f-WP\arabic*), leftmargin=1.6cm]
		\item $\E[W^Q_H(t)] = 0$ for all $t \in \bbR$; \label{item:fWP1}
		\item $Q_H(s,t) = \frac{1}{2}(|t|^{2H} + |s|^{2H} - |t-s|^{2H}) Q$ for all $s, t \in \R$;  \label{item:fWP2}
		\item $W^Q_H$ has continuous sample paths. \label{item:fWP3}
	\end{enumerate}
	Here, $(Q_{H}(s,t))_{s,t\in \R} \subseteq \mathscr L_1^+(U)$
	are the covariance operators of $W^Q_H$,
	cf.~\eqref{eq:cov-ops}.
\end{definition}
Note that for $H = \frac{1}{2}$, the above definition reduces to a characterization of a standard 
(non-fractional) $Q$-Wiener process when restricted to $[0,\infty)$.

In the real-valued setting, the fractional Brownian motion 
can also be represented as a stochastic integral over $\bbR$~\cite[Definition~2.1]{MandelbrotVanNess1968}.
Proposition~\ref{prop:fQWP} below
shows that such a representation is also available in the $U$-valued setting
using the stochastic integral over $\bbR$ from Section~\ref{subsec:prelims:stochastic-int-twosided-Wiener}.
This representation can be leveraged to prove that a fractional $Q$-Wiener process
can be viewed as a limiting case of the class of processes discussed in Section~\ref{sec:frac-stoch-ACP-R},
see Proposition~\ref{prop:fQWP-limit-of-Zgamma}.
We conclude this section by making some remarks about the Markov behavior of $W^Q_H$, see
Subsection~\ref{subsec:fQWP:Markov}.

\subsection{Integral representation and relation to $Z_\gamma$}

Let $Q \in \LO_1^+(U)$ and let $(W^Q(t))_{t\in\bbR}$ be a two-sided $Q$-Wiener process, see 
Subsection~\ref{subsec:prelims:stochastic-int-twosided-Wiener}.
We define the process $(\widehat W^Q_H(t))_{t\in\bbR}$ via
the following Mandelbrot--Van Ness~\cite{MandelbrotVanNess1968} type integral:
\begin{equation}\label{eq:WQH}
	\widehat W_H^Q(t) 
	:=
	\int_\bbR K_H(t,r) \rd W^Q(r), \quad t \in \R,
\end{equation}
where the
integration kernel $K_H \from \R^2 \to \R$ given by
\begin{equation}\label{eq:Mandelbrot-vanNess-int-kern}
	K_H(t,r) := \frac{(t-r)^{H-\frac{1}{2}}_+ - (-r)_+^{H-\frac{1}{2}}}{C_H}, 
	\quad 
	(t,r) \in \R^2.
\end{equation}
The normalizing constant $C_H$ is chosen as
\[
C_H := 
\int_\R \Bigl\vert(1-r)_+^{H-\frac{1}{2}} - (-r)^{H-\frac{1}{2}}_+ \Bigr\vert^2 \rd r 
=
\frac{3 - 2H}{4H} \mathrm B(2 - 2H, H + \tfrac{1}{2}),
\]
where $\mathrm B$ denotes the beta function~\cite[Theorem~B.1]{Picard2011},
so that $\widehat Q_H(1,1) = Q$, where $(\widehat Q_H(t,s))_{t,s\in\bbR}$
denotes the family of covariance operators associated to $\widehat W^Q_H$.
From this definition it follows that
\begin{equation}
	\widehat W^Q_H(t) 
	=
	\frac{1}{C_H}\int_{-\infty}^0 [(t-r)^{H - \frac{1}{2}} - (-r)^{H - \frac{1}{2}}] \rd W^Q(r)
	+
	\frac{1}{C_H}\int_0^t (t - r)^{H - \frac{1}{2}} \rd W^Q(r)
\end{equation}
for $t \in [0,\infty)$, whereas for $t \in (-\infty, 0)$ we have
\begin{equation}
	\widehat W^Q_H(t) 
	=
	\frac{1}{C_H}\int_{-\infty}^t [(t-r)^{H - \frac{1}{2}} - (-r)^{H - \frac{1}{2}}] \rd W^Q(r)
	+
	\frac{1}{C_H}\int_t^0 ( - r)^{H - \frac{1}{2}} \rd W^Q(r).
\end{equation}
In particular, for $H = \frac{1}{2}$ and $t \in [0,\infty)$ 
we find $\widehat W^Q_{\nicefrac{1}{2}}(t) = W^Q(t)$, $\bbP$-a.s.

We then have the following relation to the fractional $Q$-Wiener process:
\begin{proposition}\label{prop:fQWP}
	Formula~\eqref{eq:WQH} yields a well-defined square-integrable
	random variable
	$\widehat W^Q_H(t) \in L^2(\Omega, \cF_t^{\delta W^Q}, \bbP; U)$ for each $t \in \bbR$, and there exists a modification of 
	the process $(\widehat W^Q_H(t))_{t\in\bbR}$
	which is a fractional $Q$-Wiener process in the sense of Definition~\ref{def:fQWP}.
\end{proposition}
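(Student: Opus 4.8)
The plan is to verify the three defining conditions \ref{item:fWP1}--\ref{item:fWP3} of Definition~\ref{def:fQWP}, reducing each computation to the real-valued Mandelbrot--Van Ness setting via the It\^o isometry~\eqref{eq:ito-R}, which is possible because the kernel $K_H$ in~\eqref{eq:Mandelbrot-vanNess-int-kern} is scalar-valued. First I would settle well-definedness: the integrand in~\eqref{eq:WQH} is $K_H(t,\,\cdot\,)\id_U \in \LO(U)$, so by the existence criterion from Subsection~\ref{subsec:prelims:stochastic-int-twosided-Wiener} the integral exists once $K_H(t,\,\cdot\,)\id_U Q^{\frac12} \in L^2(\bbR; \LO_2(U))$; since $\norm{K_H(t,r)\id_U Q^{\frac12}}{\LO_2(U)}^2 = |K_H(t,r)|^2 \tr Q$, this reduces to $\int_\bbR |K_H(t,r)|^2 \rd r < \infty$, the classical square-integrability of the Mandelbrot--Van Ness kernel. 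It holds precisely for $H \in (0,1)$: the singularity at $r = t$ is square-integrable because $2(H - \tfrac12) > -1$, while near $r \to -\infty$ the difference of power terms decays like $|r|^{H - \frac32}$ with $2(H - \tfrac32) < -1$. Square-integrability of $\widehat W^Q_H(t)$ then follows from~\eqref{eq:ito-R}, and the asserted $\cF_t^{\delta W^Q}$-measurability from the support of $K_H(t,\,\cdot\,)$ together with the adaptedness of the stochastic-integral process recorded in Subsection~\ref{subsec:prelims:stochastic-int-twosided-Wiener}.

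Next I would establish the mean-zero condition~\ref{item:fWP1} and the covariance~\ref{item:fWP2}. The mean vanishes since the stochastic integral of a deterministic integrand against $W^Q$ is centered, and every finite family $(\widehat W^Q_H(t_1), \dots, \widehat W^Q_H(t_n))$ is Gaussian because each coordinate functional is an $L^2(\Omega)$-limit of Gaussian random variables assembled from increments of $W^Q$. For the covariance I would repeat the polarization argument from the proof of Proposition~\ref{prop:separable-cov-matern}: writing $\scalar{\widehat W^Q_H(t), x}{U} = \int_\bbR \widetilde k(t,r;x) \rd W^Q(r)$ with $\widetilde k(t,r;x) h := K_H(t,r) \scalar{x,h}{U}$ and applying~\eqref{eq:ito-R} yields
\[
\scalar{\widehat Q_H(s,t) x, y}{U} = \left( \int_\bbR K_H(s,r) K_H(t,r) \rd r \right) \scalar{Qx, y}{U}, \quad x, y \in U.
\]
It then remains to identify the scalar integral with $\tfrac12(|t|^{2H} + |s|^{2H} - |t-s|^{2H})$; this is exactly the real-valued fractional-Brownian-motion computation underlying the normalization $C_H$ (chosen so that $\widehat Q_H(1,1) = Q$), which I would cite or carry out by the standard change of variables, thereby obtaining~\ref{item:fWP2}.

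The main obstacle is the sample-path continuity~\ref{item:fWP3}, which I would obtain from a Kolmogorov--Chentsov argument in the $U$-valued setting. Combining the covariance above (or~\eqref{eq:ito-R} directly) gives $\E\norm{\widehat W^Q_H(t) - \widehat W^Q_H(s)}{U}^2 = |t-s|^{2H} \tr Q$. Since the increment $\widehat W^Q_H(t) - \widehat W^Q_H(s)$ is a centered $U$-valued Gaussian random variable, the equivalence of Gaussian moments provides, for every $p \in \N$, a constant $C_p$ with $\E\norm{\widehat W^Q_H(t) - \widehat W^Q_H(s)}{U}^{2p} \le C_p (\tr Q)^p |t-s|^{2Hp}$. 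Choosing $p$ large enough that $2Hp > 1$---possible for any $H \in (0,1)$---the Kolmogorov--Chentsov continuity theorem yields a continuous modification on each compact interval $[-n,n]$; these modifications agree $\bbP$-a.s.\ on overlaps, so patching them over $n \in \N$ produces a single continuous modification on $\bbR$, which is the desired fractional $Q$-Wiener process. I expect the only delicate points to be the moment bound, handled cleanly by Gaussianity, and the passage from compact intervals to $\bbR$, rather than any new difficulty beyond the real-valued theory.
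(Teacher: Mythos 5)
Your proposal is correct and follows essentially the same strategy as the paper: well-definedness and square-integrability via the It\^o isometry~\eqref{eq:ito-R}, reduction of the covariance computation to the scalar Mandelbrot--Van Ness integral by the same polarization argument as in Proposition~\ref{prop:separable-cov-matern}, and a Kolmogorov--Chentsov argument for path continuity. The one structural difference is how the increment moments are obtained: the paper first proves that $\widehat W^Q_H$ is self-similar with exponent $H$ and has stationary increments (by changes of variables in~\eqref{eq:WQH}), which reduces $\| \widehat W^Q_H(t+h) - \widehat W^Q_H(t) \|_{L^p(\Omega;U)}$ to $|h|^H \| \widehat W^Q_H(1) \|_{L^p(\Omega;U)}$, and then applies the Kahane--Khintchine inequality; you instead read off $\E \| \widehat W^Q_H(t) - \widehat W^Q_H(s) \|_U^2 = |t-s|^{2H} \tr Q$ directly from the covariance already computed and invoke Gaussian moment equivalence, which is the same Kahane--Khintchine fact. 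Your route is slightly more economical; the paper's route yields the self-similarity and stationary-increment properties~\eqref{eq:self-similar} and~\eqref{eq:stat-incr} as by-products of independent interest. One caveat: your measurability argument (support of $K_H(t,\,\cdot\,)$ plus adaptedness) only delivers $\cF^{\delta W^Q}_t$-measurability for $t \ge 0$; for $t < 0$ the kernel~\eqref{eq:Mandelbrot-vanNess-int-kern} is supported on $(-\infty, 0]$ rather than $(-\infty, t]$, so the argument only gives $\cF^{\delta W^Q}_0$-measurability. The paper's own proof is silent on this point, so you attempt strictly more than it does, but the claim as literally stated holds only for $t \ge 0$.
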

\begin{proof}
	By the It\^o isometry~\eqref{eq:ito-R} it suffices to note that
	\[
	\E\Bigl[\bigl\|\widehat W^Q_H(t)\bigr\|_{U}^2\Bigr] = \int_\bbR \norm{K_H(t,r)Q^\frac{1}{2}}{\LO_2(U)}^2 \rd r
	=
	\tr Q \int_\bbR |K_H(t,r)|^2 \rd r 
	< \infty
	\]
	to see that $\widehat W^Q_H(t) \in L^2(\Omega; U)$ is well-defined.
	Since $K_H$ is a deterministic kernel integrated with respect to a mean-zero Gaussian
	process $W^Q$, it readily follows that $\widehat W^Q_H$ is also mean-zero Gaussian.
	For the covariance operators $(\widehat Q_H(t,s))_{t,s\in\bbR}$ 
	of $\widehat W^Q_H$, we can argue as in the proof of Proposition~\ref{prop:separable-cov-matern} to find
	\begin{equation}
		\forall t,s \in \bbR : \quad 
		\widehat Q_H(t,s) = \int_\bbR K_H(s,r) K_H(t,r) \rd r \, Q
		=
		\E[B_H(t) B_H(s)] \, Q,
	\end{equation}
	where $B_H = (B_H(t))_{t\in\bbR}$ denotes (real-valued) fractional Brownian motion.
	Hence \ref{item:fWP2} holds by the properties of $B_H$.
	
	Lastly, we shall establish the existence of a continuous modification of~\eqref{eq:WQH}. 
	To this end, we first remark that
	$\widehat W^Q_H$ is self-similar with exponent $H$ and that its increments are stationary;
	this means for any $t \in \R$ we have, respectively,
	\begin{align}
		\label{eq:self-similar}
		\forall \alpha \in (0,\infty) &: \quad 
		\widehat W^Q_H(\alpha t) \overset{d}{=} \alpha^H \widehat W^Q_H(t), \\
		\forall h \in \R &: \quad 
		\widehat W^Q_H(t + h) - \widehat W^Q_H(t) \overset{d}{=} \widehat W^Q_H(h).
		\label{eq:stat-incr}
	\end{align}
	The proofs of these statements are analogous to the real-valued case;
	they involve changing variables in the integral~\eqref{eq:WQH} and using the facts that
	$\alpha^{-\frac{1}{2}} W^Q(\alpha\,\cdot\,)$ and
	$W^Q(\,\cdot\, + h)$, respectively, are also two-sided $Q$-Wiener processes.
	Thus,
	\begin{equation}
		\begin{aligned}
			\bigl\| \widehat W^Q_H(t + h) - \widehat W^Q_H(t) \bigr\|_{L^p(\Omega; U)}
			=
			\bigl\| \widehat W^Q_H(|h|) \bigr\|_{L^p(\Omega; U)}
			=
			|h|^H \bigl\| \widehat W^Q_H(1) \bigr\|_{L^p(\Omega; U)}
		\end{aligned}
	\end{equation}
	for any $p \in (1,\infty)$.
	By the Kahane--Khintchine inequalities (see, e.g.,~\cite[Theorem~6.2.6]{HvNVWVolumeII}), 
	there exists a constant $C_p \in (0,\infty)$ such that
	\[
	\bigl\| \widehat W^Q_H(1) \bigr\|_{L^p(\Omega; U)} 
	\le 
	C_p \bigl\| \widehat W^Q_H(1) \bigr\|_{L^2(\Omega; U)} = C_p \tr Q.
	\]
	We have thus shown that $t \mapsto \widehat W^Q_H(t)$
	is $H$-H\"older continuous from $\bbR$ to $L^p(\Omega; U)$.
	Since $p \in (1,\infty)$ was arbitrary, we can apply 
	the Kolmogorov--Chentsov theorem~\cite[Corollary~3.10]{Cox2021local}
	with $p \in (\nicefrac{1}{H}, \infty)$ to conclude
	that $\widehat W^Q_H$ has a modification with continuous sample paths.
\end{proof}

Now we consider the relation between fractional $Q$-Wiener process
and the process $Z_\gamma$ considered in Section~\ref{sec:frac-stoch-ACP-R}.
For $\varepsilon \in (0,\infty)$, let $Z_\gamma^\varepsilon$ denote the mild solution
to~\eqref{eq:the-SPDE} with $A = \varepsilon \id_U$
and define the process 
$\bigl(\overline Z_\gamma^\varepsilon(t)\bigr)_{t \in \R}$ by 
\begin{equation}\label{eq:Zgammaepsilon-bar}
	\overline Z_\gamma^\varepsilon(t) := \frac{\Gamma(\gamma)}{C_{\gamma-\frac{1}{2}}} (Z_\gamma^\varepsilon(t) - Z_\gamma^\varepsilon(0)),
	\quad
	t \in \R.
\end{equation}
Note that
$\widehat W^Q_H(t)$ can formally be written as a
``convergent difference of divergent integrals,'' cf.~\cite[Footnote~3]{MandelbrotVanNess1968}:
\[
\frac{1}{C_H}
\biggl[
\int_{-\infty}^{t}(t-s)^{H-\frac{1}{2}} \rd W^Q(s)
-
\int_{-\infty}^{0}(-s)^{H-\frac{1}{2}} \rd W^Q(s)
\biggr].
\]
The above expression would correspond to taking $\varepsilon = 0$ in~\eqref{eq:Zgammaepsilon-bar},
which is ill-defined
since Assumption~\ref{ass:HAQ}\ref{ass:HAQ:1} cannot be satisfied.
However, the next result shows that a fractional $Q$-Wiener process
can be viewed as a limiting case of $\overline Z_\gamma^\varepsilon$ as $\varepsilon \downarrow 0$.

\begin{proposition}\label{prop:fQWP-limit-of-Zgamma}
	Let $Q \in \LO_1^+(U)$ and $\gamma \in (\nicefrac{1}{2},\nicefrac{3}{2})$.
	The family of stochastic processes $(\overline Z_\gamma^\varepsilon)_{\varepsilon \in (0,\infty)}$
	defined by~\eqref{eq:Zgammaepsilon-bar} converges uniformly
	on compact subsets of $\bbR$ in mean-square sense to the fractional
	$Q$-Wiener process $\widehat W^Q_H$ in~\eqref{eq:WQH}
	with Hurst parameter 
	$H = \gamma - \frac{1}{2}$ as $\varepsilon \downarrow 0$:
	\begin{equation}
		\forall \, T \in (0,\infty) : \quad 
		\lim_{\varepsilon \downarrow 0} \sup_{t \in [-T, T]}
		\Norm{\widehat W^Q_{\gamma - \frac{1}{2}}(t) - \overline Z_\gamma^\varepsilon(t)}{L^2(\Omega; U)} = 0.
	\end{equation}
\end{proposition}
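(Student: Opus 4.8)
The plan is to reduce the statement to a purely scalar kernel estimate via the It\^o isometry, and then to establish the required \emph{uniform} convergence by an $\varepsilon$-dependent splitting of the domain of integration.

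First I would make the two driving kernels explicit. Since $A = \varepsilon\,\id_U$ generates the semigroup $S_\varepsilon(r) = e^{-\varepsilon r}\id_U$, formula~\eqref{eq:def-minusinfty-Zgamma} gives $Z_\gamma^\varepsilon(t) = \frac{1}{\Gamma(\gamma)}\int_\bbR (t-s)_+^{\gamma-1} e^{-\varepsilon(t-s)_+}\rd W^Q(s)$. Writing $H := \gamma-\tfrac12 \in (0,1)$ (so $C_{\gamma-\frac12} = C_H$), definition~\eqref{eq:Zgammaepsilon-bar} and linearity of the stochastic integral then yield $\overline Z_\gamma^\varepsilon(t) = \int_\bbR K_H^\varepsilon(t,s)\rd W^Q(s)$ with
\[
K_H^\varepsilon(t,s) := \frac{1}{C_H}\bigl[(t-s)_+^{H-\frac12}e^{-\varepsilon(t-s)_+} - (-s)_+^{H-\frac12}e^{-\varepsilon(-s)_+}\bigr].
\]
Comparing with~\eqref{eq:WQH}--\eqref{eq:Mandelbrot-vanNess-int-kern}, the difference kernel is $K_H(t,s) - K_H^\varepsilon(t,s) = \frac{1}{C_H}[h_\varepsilon(t-s) - h_\varepsilon(-s)]$, where $h_\varepsilon(u) := u_+^{H-\frac12}(1-e^{-\varepsilon u})$. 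Because both integrands are scalar multiples of $\id_U$, the It\^o isometry~\eqref{eq:ito-R} together with $Q \in \LO_1^+(U)$ gives $\Norm{\widehat W^Q_H(t) - \overline Z_\gamma^\varepsilon(t)}{L^2(\Omega; U)}^2 = \frac{\tr Q}{C_H^2}\, J_\varepsilon(t)$, where, after the substitution $v = -s$, $J_\varepsilon(t) := \int_\bbR |h_\varepsilon(v+t) - h_\varepsilon(v)|^2\rd v$. It thus remains to prove the real-valued statement $\sup_{|t|\le T} J_\varepsilon(t) \to 0$ as $\varepsilon \downarrow 0$. (One could instead compute all three terms of $\Norm{\widehat W^Q_H(t) - \overline Z_\gamma^\varepsilon(t)}{L^2(\Omega;U)}^2$ from Proposition~\ref{prop:separable-cov-matern}, but that route forces one to deal with Bessel-function asymptotics of the Mat\'ern covariance, which the kernel approach avoids.)

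Next I would estimate $J_\varepsilon$. Since $J_\varepsilon(-t) = J_\varepsilon(t)$ (shift the integration variable) it suffices to treat $t \in (0,T]$, and as $h_\varepsilon$ is supported on $[0,\infty)$ one has $J_\varepsilon(t) = \int_0^t h_\varepsilon(w)^2\rd w + \int_0^\infty |h_\varepsilon(v+t) - h_\varepsilon(v)|^2\rd v$. The first term is handled by $1-e^{-\varepsilon w}\le\varepsilon w$, giving $\int_0^t h_\varepsilon(w)^2\rd w \le \varepsilon^2\int_0^T w^{2H+1}\rd w \to 0$ uniformly in $t$. For the increment term I would split $(0,\infty) = (0,A]\cup(A,\infty)$ with $A := 1/\varepsilon$ and apply two complementary bounds, always to the \emph{difference} $h_\varepsilon(v+t) - h_\varepsilon(v)$: on $(0,A]$ the mean value theorem with $|h_\varepsilon'(u)|\le C_H\,\varepsilon\, u^{H-\frac12}$ (which follows from $1-e^{-\varepsilon u}\le\varepsilon u$ and $e^{-\varepsilon u}\le1$); on $(A,\infty)$ the $\varepsilon$-free estimate $|h_\varepsilon(v+t)-h_\varepsilon(v)| \le C_H\, t\, v^{H-\frac32}$, obtained by writing $h_\varepsilon = g_0 - g_\varepsilon$ with $g_0(u) = u_+^{H-\frac12}$, $g_\varepsilon(u) = u_+^{H-\frac12}e^{-\varepsilon u}$ and bounding each increment (using $\varepsilon u\, e^{-\varepsilon u}\le e^{-1}$ to strip the $\varepsilon$ off $g_\varepsilon'$).

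The main obstacle is precisely this uniformity in $t$. Integrating the two bounds yields, uniformly in $t\in(0,T]$, a near contribution $\lesssim_{H,T}\varepsilon^2(A+T)^{2H}$ and a tail contribution $\lesssim_{H,T} t^2 A^{2H-2}$; the choice $A = 1/\varepsilon$ balances them at order $\varepsilon^{2-2H}$, which tends to $0$ exactly because $H\in(0,1)$ (equivalently $\gamma\in(\tfrac12,\tfrac32)$), while $H>0$ is what makes $v^{2H-1}$ integrable near $0$. Hence $\sup_{|t|\le T} J_\varepsilon(t) \le C_{H,T}(\varepsilon^2 + \varepsilon^{2-2H}) \to 0$, giving the claim with rate $\varepsilon^{1-H}$ in the $L^2(\Omega;U)$-norm. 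The delicate point underlying everything is that neither $h_\varepsilon$ nor $K_H(t,\cdot)$ belongs to $L^2(\bbR)$ on its own — only the increments do, through tail cancellation — so one must never bound the two terms separately, and the $\varepsilon$-dependent cutoff $A = 1/\varepsilon$ is exactly what upgrades pointwise decay into decay uniform in $t$.
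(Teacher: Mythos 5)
Your proof is correct, and its skeleton coincides with the paper's: both reduce the claim via the It\^o isometry to the scalar estimate $\sup_{|t|\le T} J_\varepsilon(t) \to 0$, split $J_\varepsilon(t)$ into the same two pieces (the ``new mass'' term $\int_0^{|t|} h_\varepsilon(w)^2 \rd w$ and the increment integral $\int_0^\infty |h_\varepsilon(v+t)-h_\varepsilon(v)|^2 \rd v$), and handle the first piece identically via $1-e^{-x}\le x$. The difference lies in the treatment of the increment term, which is the crux. The paper writes the increment as $\int_0^{|t|} h_\varepsilon'(v+u)\rd u$ (fundamental theorem of calculus), applies Minkowski's integral inequality to pull the $u$-integral outside the $L^2_v$-norm, and then performs the change of variables $z = \varepsilon(u+v)$, which extracts the exact scaling factor $\varepsilon^{3-2\gamma} = \varepsilon^{2-2H}$ in one stroke and reduces everything to the finiteness of a single $\varepsilon$-independent integral $\int_0^\infty |(\gamma-1)z^{\gamma-2}(1-e^{-z}) + z^{\gamma-1}e^{-z}|^2 \rd z$. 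You instead split the domain at the cutoff $A = 1/\varepsilon$ and use two complementary mean-value-theorem bounds ($|h_\varepsilon'(u)| \lesssim \varepsilon u^{H-\frac{1}{2}}$ near the origin, the $\varepsilon$-free bound $\lesssim t\, v^{H-\frac{3}{2}}$ in the tail), balancing the two contributions at the same rate $\varepsilon^{2-2H}$. Both arguments require exactly $H \in (0,1)$ and yield the rate $\varepsilon^{1-H}$ in $L^2(\Omega;U)$; the paper's route is more compact and avoids the cutoff bookkeeping, while yours is more elementary (no Minkowski inequality for integrals), and your symmetry observation $J_\varepsilon(-t) = J_\varepsilon(t)$ is slicker than the paper's separate recomputation for $t<0$.

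One side remark in your closing paragraph is incorrect, although it is never used in the argument: $K_H(t,\,\cdot\,)$ \emph{does} belong to $L^2(\bbR)$ --- this is precisely what makes $\widehat W^Q_H(t)$ a well-defined stochastic integral, and it is verified in Proposition~\ref{prop:fQWP}. What fails to be square-integrable are the individual power terms $(t-s)_+^{H-\frac{1}{2}}$ and $(-s)_+^{H-\frac{1}{2}}$, as well as your $h_\varepsilon$; the kernel $K_H(t,\,\cdot\,)$ is itself an increment of $u \mapsto u_+^{H-\frac{1}{2}}$ and already enjoys the tail cancellation. Your actual estimates respect this distinction (you only ever bound differences), so the proof stands as written.
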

\begin{proof}
	For $t \in [0,\infty)$, we can write
	\begin{align*}
		\widehat W^Q_{\gamma - \frac{1}{2}}(t) &- \overline Z_\gamma^\varepsilon(t)
		=
		\frac{1}{C_{\gamma - \frac{1}{2}}}
		\int_0^t (t-s)^{\gamma-1} \bigl(1 - e^{-\varepsilon(t-s)}\bigr) \rd W^Q(s)
		\\
		&+ 
		\frac{1}{C_{\gamma - \frac{1}{2}}}
		\int_{-\infty}^0 \left[(t-s)^{\gamma-1}\bigl(1 - e^{-\varepsilon(t-s)}\bigr)
		-
		(-s)^{\gamma-1}\bigl(1 - e^{\varepsilon s}\bigr)\right] \rd W^Q(s).
	\end{align*}
	After applying the It\^o isometry to each of these integrals
	and using the respective changes of variables $s' := - s$ and $s' := t -s$, we obtain
	\begin{align}\label{eq:limiting-case-splitting}
		\biggl\|\widehat W^Q_{\gamma - \frac{1}{2}}(t) &- \overline Z_\gamma^\varepsilon(t)\biggr\|_{L^2(\Omega; U)}^2
		=
		\frac{I_1(t) + I_2(t)}{C_{\gamma - \frac{1}{2}}}(\tr Q)^2, \\
		I_1(t) 
		&:= 
		\int_0^{|t|} s^{2\gamma-2} \bigl(1 - e^{-\varepsilon s}\bigr)^2 \rd s, 
		\\
		I_2(t) 
		&:= 
		\int_{0}^\infty \left\vert(|t|+s)^{\gamma-1}\bigl(1 - e^{-\varepsilon(|t|+s)}\bigr)
		-
		s^{\gamma-1}\bigl(1 - e^{-\varepsilon s}\bigr)\right\vert^2 \rd s.
	\end{align}
	For $t \in (-\infty, 0)$ we find~\eqref{eq:limiting-case-splitting}
	by instead splitting into integrals over
	$(-\infty, t)$ and $(t, 0)$ and changing variables
	$s' := t - s$ and $s' := -s$, respectively.
	
	For $t \in [-T, T]$, the estimate $1 - e^{-x} \le x$ for $x \in [0,\infty)$ yields for the first term:
	\begin{align*}
		I_1(t)
		\le \varepsilon^2 \int_0^{\lvert t \rvert} s^{2\gamma} \rd s
		=
		\frac{\varepsilon^2 \lvert t\rvert^{2\gamma + 1}}{2\gamma + 1}
		\le 
		\frac{\varepsilon^2 T^{2\gamma + 1}}{2\gamma + 1}.
	\end{align*}
	For $I_2$, we apply the fundamental theorem of calculus to the function
	\[
	u \mapsto (u + s)^{\gamma-1} (1-e^{-\varepsilon(u+s)}),
	\]
	followed by
	Minkowski's integral inequality~\cite[Section~A.1]{Stein1970} to find
	\begin{align*}
		I_2&(t)
		=
		\int_{0}^\infty 
		\biggl\vert 
		\int_0^{\lvert t \rvert} 
		[(\gamma-1)(u+s)^{\gamma-2} (1-e^{-\varepsilon (u+s)}) + \varepsilon (u+s)^{\gamma-1} e^{-\varepsilon (u+s)}] 
		\rd u 
		\biggr\vert^2 
		\rd s
		\\
		&\le 
		\Biggl(
		\int_0^{T} 
		\biggl[
		\int_{0}^\infty \Bigl|(\gamma-1)(u+s)^{\gamma-2} (1-e^{-\varepsilon (u+s)}) + \varepsilon (u+s)^{\gamma-1} e^{-\varepsilon (u+s)}\Bigr|^2 \rd s 
		\biggr]^{\frac{1}{2}}
		\rd u
		\Biggr)^2
		\\
		&=
		\varepsilon^{3-2\gamma}
		\Biggl(
		\int_0^{T} 
		\biggl[
		\int_{\varepsilon u}^\infty  \Bigl|(\gamma-1)v^{\gamma-2} (1-e^{-v}) + v^{\gamma-1} e^{-v}\Bigr|^2 \rd v
		\biggr]^\frac{1}{2}
		\rd u
		\Biggr)^2
		\\
		&\le 
		\varepsilon^{3-2\gamma} \, T^2 \int_{0}^\infty  \Bigl|(\gamma-1)v^{\gamma-2} (1-e^{-v}) + v^{\gamma-1} e^{-v}\Bigr|^2 \rd v,
	\end{align*}
	where we performed the change of variables $v(s) := \varepsilon(u + s)$ on the third line.
	
	The improper integral on the last line converges:
	As $v \downarrow 0$, the squares of both terms are of order $\cO(v^{2\gamma - 2})$, where 
	we again use $1 - e^{-v} \le v$ for the first term,
	and we have $2\gamma - 2 \in (-1, 1)$;
	the square of the first term is of order $\cO(v^{2\gamma - 4})$ as $v \to \infty$,
	with $2\gamma - 4 \in (-3, -1)$, whereas the second term decays exponentially.
	
	The convergence thus follows by letting
	$\varepsilon \downarrow 0$ in the previous two displays.
\end{proof}

\subsection{Remarks on Markov behavior}
\label{subsec:fQWP:Markov}
Now we consider the Markov behavior of fractional $Q$-Wiener processes with Hurst parameter $H \in (0,1)$.
Since the case $H = \frac{1}{2}$ corresponds to a standard $Q$-Wiener process, we find that $W^Q_{\nicefrac{1}{2}}$
is simple Markov, whereas we can expect that $W^Q_H$ is not weakly Markov
for $H \neq \frac{1}{2}$.

In the real-valued case, the first published proof of non-Markovianity 
appears to be~\cite{Huy2003}, which shows that $B_H$ is not simple Markov for $H \neq \frac{1}{2}$
using a characterization 
in terms of its covariance function which is valid for Gaussian processes.
This result can be improved by applying the theory of~\cite[Chapter~V]{HidaHitsuda1993} for Gaussian $N$-ple Markov processes
to the Mandelbrot--Van Ness representation of $B_H$. Namely,
according to~\cite[Theorem~5.1]{HidaHitsuda1993},
any real-valued Gaussian process of the form 
\[
\biggl(\int_{-\infty}^t K(t,s) \rd B(s)\biggr)_{t\in\bbR}
\]
can only be $N$-ple Markov for $N \in \bbN$ if its integration kernel $F$ can
be written as
\[
K(s,t) = \sum_{j=1}^N f_j(s) g_j(t)
\]
for some functions $(f_j)_{j=1}^N, (g_j)_{j=1}^N$. This condition is not
satisfied for $K_H$ from~\eqref{eq:Mandelbrot-vanNess-int-kern}.

In order to establish that $W^Q_H$ (or $B_H$)
does not have the weak Markov property for $H \neq \frac{1}{2}$, 
one could attempt to associate a
nonlocal precision operator to the process and apply the necessary condition
from Theorem~\ref{thm:weakMarkov-Gaussian-necessity}.
Formally, the coloring operator $\cL^{-1}_H$ 
of $W^Q_H$ acts on functions $f \from \bbR \to U$ as
\[
\cL^{-1}_H f(t) 
=
\frac{1}{C_H}
\int_\bbR K_H(t,s) f(s) \rd s,
\quad 
\forall t \in \bbR.
\]
For certain ranges of $H$, see for instance~\cite[Equation~(31)]{Picard2011}, 
an explicit formula of
its inverse $\cL_H$ can also be determined.
The operator
$\cL^{-1}_H$ is bounded on some
weighted H\"older space by~\cite[Theorem~6]{Picard2011},
but there is no reason to expect that it is bounded on
a Hilbert space such as $L^2(\bbR; U)$. Therefore, 
Theorem~\ref{thm:weakMarkov-Gaussian-necessity} is not directly applicable, as
it
would need to be extended to the Banach space setting, 
which is beyond the scope of this work.

\section*{Acknowledgments}
The authors acknowledge
fruitful discussions with Richard Kraaij which led to
Proposition~\ref{prop:WP-R-not-martingale}
and the choice of the two-sided $Q$-Wiener process 
to define the stochastic integral in Subsection~\ref{subsec:prelims:stochastic-int-twosided-Wiener}.
Moreover, the authors thank Jan van Neerven for
carefully reading the manuscript and 
providing valuable comments. 

\section*{Funding}

K.K.\ acknowledges support 
of the research project 
\emph{Efficient spatiotemporal statistical modelling 
	with stochastic PDEs} 
(with project number  
VI.Veni.212.021) 
by the talent programme \emph{Veni}
which is financed by 
the Dutch Research Council (NWO).

\appendix

\section{Spaces of vector-valued functions}\label{app:mixed-norm-bochner}
\subsection{Spaces of measurable, continuous and differentiable functions}
\label{subsec:app-func-spaces:measurable-cont}
Let $(S, \mathscr A, \mu)$ be a measure space.
We abbreviate the phrases 
``almost everywhere'' and 
``almost all'' 
by 
``a.e.''\ and ``a.a.''\!,  
respectively. 

Let $\cB(E)$ denote the Borel $\sigma$-algebra of $E$. 
Given $x \in E$ and $f \from S \to \R$, 
we define $f \otimes x \from S \to E$ by $[f\otimes x](\,\cdot\,) := f(\,\cdot\,)x$.
A function $f \from (S, \mathscr A, \mu) \to (E, \cB(E))$
is said to be $\mu$-strongly measurable if it is the $\mu$-a.e.\ limit 
of linear combinations of $\mathbf 1_{A} \otimes x$, where 
$x \in E$ and $A \in \mathscr A$ with $\mu(A) < \infty$.
We denote by $B_b(S, \mathscr A)$ (resp.\ $C_b(S)$) the Banach space of
$\mathscr A$-measurable (resp.\ continuous) and bounded functions $f \from S \to \R$ endowed with the 
norm $\norm{f}{B_b(S, \mathscr A)} = \norm{f}{C_b(S)} = \sup_{s \in S} \vert f(s) \vert$.
If the $\sigma$-algebra is clear from context, we may simply write $B_b(S)$.
The set $C_c^\infty(J; E)$ consists of infinitely differentiable functions 
from $J$ to $E$ with compact support.

\subsection{Bochner and Sobolev spaces}
\label{subsec:app-func-spaces:Bochner-Sobolev}
For $p \in [1,\infty)$, let $L^p(S, \mathscr A\!, \mu;E)$ denote the Bochner space of 
(equivalence classes of)
strongly measurable, 
$p$-integrable functions
equipped with the norm 
$
\norm{f}{L^p(S, \mathscr A \!, \mu;E)} 
:= 
\left( \int_S \norm{f(s)}{E}^p \rd \mu(s) \right)^{\!\nicefrac{1}{p}}\!\!.
$
If the $\sigma$-algebra and measure are clear from context,
we may suppress them from our notation and simply write $L^p(S; E)$.
The space $L^2(S, \mathscr A \!, \mu; U)$ is a Hilbert space
when equipped with the inner product 
$\scalar{f,g}{L^2(S, \mathscr A \!, \mu; U)} := \int_S \scalar{f(s),g(s)}{U} \rd \mu(s)$.

If $(S, \mathscr A\!, \mu)$ and $(T, \mathscr B, \nu)$ are $\sigma$-finite measure spaces,
then the measurable space $(S \times T, \mathscr A \otimes \mathscr B)$,
equipped with the product $\sigma$-algebra
\[
\mathscr A \otimes \mathscr B := \sigma(A \times B : A \in \mathscr A, B \in \mathscr B),
\]
admits a unique $\sigma$-finite \emph{product measure $\mu\times \nu$}
satisfying 
\[
(\mu \times \nu)(A \times B) = \mu(A) \nu(B) \quad \forall A \in \mathscr A, \, B \in \mathscr B.
\]
For $p, q \in [1,\infty)$, we define the \emph{mixed-exponent Bochner space}
$L^{p,q}(S \times T; E)$ as the Banach space of (equivalence classes of) 
strongly $(\mu \times \nu)$-measurable functions $f \from S \times T \to E$
such that
\begin{equation}
	\| f \|_{L^{p,q}(S \times T; E)}
	:=
	\biggl[\int_S \biggl[ \int_T \| f(s, t) \|_E^q \rd \nu(t)  \biggr]^{\frac{p}{q}} \rd \mu(s) \biggr]^{\frac{1}{p}}
	<
	\infty.
\end{equation}
Intervals $J \subseteq \R$ will be equipped with the Lebesgue measure $\lambda$
and Lebesgue $\sigma$-algebra $\cB(J) \vee \sigma(\cN_\lambda)$.
A function $f \in L^p(J; E)$ is said to be weakly differentiable,
with weak derivative $\partial_t f$, 
if
\[ 
\int_J \phi'(t) f(t) \rd t = -\int_J \phi(t) [\partial_t f](t) \rd t \quad \forall \phi \in C^\infty_c(J; \R).
\]
More generally, we denote its $n$th weak derivative by $\partial_t^n f$.
If $\partial_t^{\, j} f \in L^p(J; E)$
for all $j \in \{0, \dots, n\}$, then $f$ belongs to
the Bochner--Sobolev space $W^{n,p}(J; E)$, which is 
a Banach space using the norm
$\| f \|_{W^{n,p}(J; E)} := (\sum_{j=0}^{n} \| \partial_t^j f \|_{L^p(J; E)}^p)^{\nicefrac{1}{p}}$.
If $p = 2$ and $E = U$, then $H^n(J; U) := W^{n,2}(J; U)$ is a Hilbert space
whose norm is induced by the
inner product 
$\langle f,g \rangle_{H^n(J; U)} := \sum_{j=0}^n \langle \partial_t^{\, j} f, \partial_t^{\, j} g\rangle_{L^2(J;U)}$.

\section{Auxiliary results}\label{app:aux}
This appendix collects some auxiliary results which
are needed in the main text but have been postponed for
the sake of readability.

\subsection{Facts regarding conditional independence}

Let $\cG_1,\cH, \cG_2\subseteq \cF$ be $\sigma$-algebras on $(\Omega, \cF, \bbP)$.
We recall a characterization of conditional independence, see~\cite[Theorem~8.9]{Kallenberg2002}, 
from which we derive a lemma which is useful for establishing relations between 
various (equivalent formulations of) Markov properties defined in Section~\ref{sec:markov-properties}.

\begin{theorem}[Doob's conditional independence property]\label{thm:Doob-cond-ind}
	We have $\cG_1 \ind_{\cH} \cG_2$ if and only if
	\begin{equation}
		\bbP(G_2 \mid \cG_1 \vee \cH) = \bbP(G_2 \mid \cH),
		\quad \bbP\text{-a.s.}, 
		\quad 
		\forall G_2 \in \cG_2.
	\end{equation}
\end{theorem}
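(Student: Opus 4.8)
The plan is to prove the two implications separately, in each case reducing to the defining relation of conditional independence recalled in Subsection~\ref{subsec:prelims:Notation}: $\cG_1 \ind_{\cH} \cG_2$ means that $\bbP(G_1 \cap G_2 \mid \cH) = \bbP(G_1 \mid \cH)\,\bbP(G_2 \mid \cH)$ $\bbP$-a.s.\ for all $G_1 \in \cG_1$ and $G_2 \in \cG_2$. The only tools needed are the tower property of conditional expectation, the fact that $\cH$- (respectively $(\cG_1 \vee \cH)$-) measurable factors may be pulled out of the corresponding conditional expectation, and a monotone class argument to pass from a generating $\pi$-system to the full $\sigma$-algebra $\cG_1 \vee \cH$.

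First I would treat the implication from the displayed identity to conditional independence. Fixing $G_1 \in \cG_1$ and $G_2 \in \cG_2$, I would condition first on the larger $\sigma$-algebra $\cG_1 \vee \cH$ and invoke the tower property, pulling the $(\cG_1 \vee \cH)$-measurable factor $\mathbf 1_{G_1}$ out of the inner conditional expectation to obtain $\bbP(G_1 \cap G_2 \mid \cH) = \E[\mathbf 1_{G_1}\,\bbP(G_2 \mid \cG_1 \vee \cH) \mid \cH]$. Substituting the hypothesis $\bbP(G_2 \mid \cG_1 \vee \cH) = \bbP(G_2 \mid \cH)$ and then extracting the now $\cH$-measurable factor $\bbP(G_2 \mid \cH)$ from the outer conditional expectation yields $\bbP(G_2 \mid \cH)\,\bbP(G_1 \mid \cH)$, which is exactly the conditional independence relation.

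For the converse I would assume $\cG_1 \ind_{\cH} \cG_2$, fix $G_2 \in \cG_2$, and show that the $\cH$-measurable (hence $(\cG_1 \vee \cH)$-measurable) random variable $\bbP(G_2 \mid \cH)$ serves as a version of $\bbP(G_2 \mid \cG_1 \vee \cH)$. By the characterization of conditional expectation it suffices to verify $\int_A \mathbf 1_{G_2}\,\rd\bbP = \int_A \bbP(G_2 \mid \cH)\,\rd\bbP$ for every $A \in \cG_1 \vee \cH$. Since the sets $A = G_1 \cap H$ with $G_1 \in \cG_1$ and $H \in \cH$ form a $\pi$-system generating $\cG_1 \vee \cH$, a Dynkin argument reduces the task to such $A$, for which both sides equal $\E[\mathbf 1_H\,\bbP(G_1 \mid \cH)\,\bbP(G_2 \mid \cH)]$: the left-hand side via the tower property together with the assumed conditional independence, and the right-hand side by pulling the $\cH$-measurable factors out and rewriting $\E[\mathbf 1_{G_1} \mid \cH] = \bbP(G_1 \mid \cH)$.

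The conditional-expectation manipulations are routine; the step requiring the most care is the monotone class argument in the converse, where one must confirm that $\{G_1 \cap H : G_1 \in \cG_1,\ H \in \cH\}$ is a generating $\pi$-system for $\cG_1 \vee \cH$ and that the collection of sets $A$ satisfying the integral identity is a $\lambda$-system, so that Dynkin's theorem applies. Throughout, one must also keep track of the $\bbP$-a.s.\ qualifiers, since the conditional probabilities are only defined up to $\bbP$-null sets.
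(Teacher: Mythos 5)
Your proof is correct, but note that the paper does not prove this statement at all: it is recalled from the literature, cited as~\cite[Theorem~8.9]{Kallenberg2002}, and the appendix only uses it to derive Lemma~\ref{lem:cond-ind-props}. So there is no internal proof to compare against; what you have written is essentially the standard argument found in the cited reference. Both of your directions are sound. The forward direction (displayed identity implies $\cG_1 \ind_{\cH} \cG_2$) is a clean tower-property computation: conditioning on $\cG_1 \vee \cH$, pulling out the bounded $(\cG_1 \vee \cH)$-measurable factor $\mathbf 1_{G_1}$, substituting the hypothesis, and then extracting the $\cH$-measurable factor $\bbP(G_2 \mid \cH)$ — no integrability issues arise since all factors are indicators or bounded by one. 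The converse correctly reduces, via the defining property of conditional expectation, to checking the integral identity on sets $A = G_1 \cap H$; this family is indeed a $\pi$-system generating $\cG_1 \vee \cH$ (it contains both $\cG_1$ and $\cH$ by taking $H = \Omega$ or $G_1 = \Omega$), and since $A \mapsto \int_A \mathbf 1_{G_2} \rd \bbP$ and $A \mapsto \int_A \bbP(G_2 \mid \cH) \rd \bbP$ are finite measures agreeing on this $\pi$-system and on $\Omega$, Dynkin's theorem yields agreement on all of $\cG_1 \vee \cH$, exactly as you describe. The $\bbP$-a.s.\ caveats are handled appropriately throughout.
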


\begin{lemma}\label{lem:cond-ind-props}
	If $\cG_1 \ind_{\cH} \cG_2$, then
	\begin{enumerate}[(a)]
		\item $\cG_1 \vee \cH \ind_{\cH} \cG_2$; \label{lem:cond-ind-props:a}
		\item $\cG_1 \ind_{\cH'} \cG_2$ for any $\sigma$-algebra $\cH' \supseteq \cH$ such that $\cH' \subseteq \cG_1$; \label{lem:cond-ind-props:b}
		\item $\cG_1 \ind_{\cH'} \cG_2$ for any $\sigma$-algebra $\cH' \supseteq \cH$ of the form 
		$
		\cH' = \cH'_1 \vee \cH_2',
		$
		where $\cH'_1,\cH'_2$ are $\sigma$-algebras satisfying 
		$\cH_1' \subseteq \cG_1 \vee \cH$ and $\cH_2' \subseteq \cG_2 \vee \cH$. \label{lem:cond-ind-props:c}
	\end{enumerate}
\end{lemma}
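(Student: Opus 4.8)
The plan is to derive all three statements from Doob's characterization (Theorem~\ref{thm:Doob-cond-ind}), which translates each conditional independence $\cG_1 \ind_{\cK} \cG_2$ into the $\bbP$-a.s.\ identity $\bbP(G_2 \mid \cG_1 \vee \cK) = \bbP(G_2 \mid \cK)$ for all $G_2 \in \cG_2$ (and, by symmetry of conditional independence, the analogous identity with the roles of $\cG_1,\cG_2$ interchanged). The only additional tools are the tower property of conditional expectation and the trivial observation---call it \emph{decomposition}---that if two $\sigma$-algebras are conditionally independent given some $\cK$, then so are any two of their sub-$\sigma$-algebras with the same conditioning, since the defining factorization is inherited by smaller collections of events.

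Part~(a) is immediate: the hypothesis $\cG_1 \ind_\cH \cG_2$ reads $\bbP(G_2 \mid \cG_1 \vee \cH) = \bbP(G_2 \mid \cH)$, and since $(\cG_1 \vee \cH) \vee \cH = \cG_1 \vee \cH$ this is literally the Doob identity for $\cG_1 \vee \cH \ind_\cH \cG_2$. For part~(b), with $\cH \subseteq \cH' \subseteq \cG_1$ one has $\cG_1 \vee \cH' = \cG_1 = \cG_1 \vee \cH$, so the hypothesis gives $\bbP(G_2 \mid \cG_1) = \bbP(G_2 \mid \cH)$; applying $\E[\,\cdot \mid \cH']$ and the tower property (valid because $\cH \subseteq \cH' \subseteq \cG_1$) then yields $\bbP(G_2 \mid \cH') = \bbP(G_2 \mid \cH) = \bbP(G_2 \mid \cG_1 \vee \cH')$, which is exactly the Doob identity for $\cG_1 \ind_{\cH'} \cG_2$.

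Part~(c) is where the real work lies, and I expect it to be the main obstacle. First I would upgrade the hypothesis to $\cG_1 \vee \cH \ind_\cH \cG_2 \vee \cH$ by two applications of part~(a) interleaved with symmetry. Next I would record a \emph{weak-union} step: if $\cA \ind_\cC \cB$ and $\cD \subseteq \cB$, then $\cA \ind_{\cC \vee \cD} \cB$. This again follows from Doob and the tower property, since $\cB \vee \cC \vee \cD = \cB \vee \cC$ while $\bbP(A \mid \cC \vee \cD) = \E[\bbP(A \mid \cB \vee \cC) \mid \cC \vee \cD] = \bbP(A \mid \cC)$ because $\bbP(A \mid \cC)$ is already $(\cC \vee \cD)$-measurable. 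Finally, after enlarging each $\cH_i'$ to $\cH_i' \vee \cH$ (which leaves $\cH' = \cH_1' \vee \cH_2'$ unchanged, as $\cH \subseteq \cH'$, and arranges $\cH \subseteq \cH_i'$), I would apply this weak-union step twice: once with $\cD = \cH_2' \subseteq \cG_2 \vee \cH$ to pass from $\cG_1 \vee \cH \ind_\cH \cG_2 \vee \cH$ to $\cG_1 \vee \cH \ind_{\cH_2'} \cG_2 \vee \cH$, and once---after symmetrizing---with $\cD = \cH_1' \subseteq \cG_1 \vee \cH$ to reach $\cG_1 \vee \cH \ind_{\cH_1' \vee \cH_2'} \cG_2 \vee \cH$. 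Decomposition then restricts this to $\cG_1 \ind_{\cH'} \cG_2$. The delicate point is the bookkeeping of which sub-$\sigma$-algebra gets absorbed into the conditioning at each stage, and checking that the inclusions $\cH_i' \subseteq \cG_i \vee \cH$ are precisely what the weak-union step requires.
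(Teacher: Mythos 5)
Your proof is correct and follows essentially the same route as the paper: everything is reduced to Doob's characterization (Theorem~\ref{thm:Doob-cond-ind}) plus the tower property, and your ``weak-union'' step is exactly the composite of parts~\ref{lem:cond-ind-props:a} and~\ref{lem:cond-ind-props:b} that the paper chains twice in its proof of~\ref{lem:cond-ind-props:c}, absorbing $\cH_1'$ and $\cH_2'$ into the conditioning $\sigma$-algebra one at a time before restricting to sub-$\sigma$-algebras. The only cosmetic difference is that you prove the augmentation property~\ref{lem:cond-ind-props:a} directly from Doob's identity, whereas the paper cites it from Kallenberg.
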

\begin{proof}
	Part~\ref{lem:cond-ind-props:a} is~\cite[Corollary~8.11(i)]{Kallenberg2002};
	combining it with
	$\cG_1 = \cG_1 \vee \cH'$
	and ${\cH' = \cH \vee \cH'}$ yields~\ref{lem:cond-ind-props:b}.
	To prove~\ref{lem:Bochner-space-operator-c}, first note that
	$\cG_1 \vee \cH \ind_{\cH \vee \cH_1'} \cG_2 \vee \cH$
	by parts~\ref{lem:cond-ind-props:a} and~\ref{lem:cond-ind-props:b}.
	Applying~\ref{lem:cond-ind-props:a} again, we find
	\[
	\cG_1 \vee \cH \vee \cH_1' \ind_{\cH \vee \cH_1'} \cG_2 \vee \cH \vee \cH_1'.
	\]
	Since $\cH \vee \cH_1' \subseteq \cH \vee \cH' = \cH'
	= \cH_2' \vee \cH_1' \subseteq \cG_2 \vee \cH \vee \cH_1'$, part~\ref{lem:cond-ind-props:b}
	yields
	\[
	\cG_1 \vee \cH \vee \cH_1' \ind_{\cH'} \cG_2 \vee \cH \vee \cH_1',
	\]
	which proves~\ref{lem:cond-ind-props:c} since $\cG_1 \subseteq \cG_1 \vee \cH \vee \cH_1'$ 
	and $\cG_2 \subseteq \cG_2 \vee \cH \vee \cH_1'$.
\end{proof}

\subsection{Facts regarding Assumption \ref{ass:HAQ}\ref{ass:HAQ:1}}
\label{app:aux:facts-about-assumption}
\begin{lemma}\label{lem:intb-assumption-gammaleqhalf}
	Let Assumption~\textup{\ref{ass:standing}} be satisfied, i.e., 
	suppose the linear operator
	$-A \from \mathsf D(A) \subseteq U \to U$
	on the separable real Hilbert space $U$ generates an
	exponentially stable $C_0$-semigroup $(S(t))_{t\ge0}$ of bounded
	linear operators on $U$.
	If $Q \in \LO^+(U)$ and $\gamma_0 \in (-\infty, \nicefrac{1}{2}]$, then
	\[
	\int_0^\infty \| t^{\gamma_0-1} S(t) Q^\frac{1}{2} \|_{\mathscr L_2(U)}^2 \rd t = \infty,
	\]
	that is, Assumption~\textup{\ref{ass:HAQ}}\ref{ass:HAQ:1} cannot hold for $\gamma_0 \in (-\infty, \nicefrac{1}{2}]$.
\end{lemma}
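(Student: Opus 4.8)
The plan is to show that the integral diverges solely because of a non-integrable singularity of the integrand at $t = 0$; the tail behaviour as $t \to \infty$ (and hence the exponential stability) plays no role whatsoever. The key is a positive lower bound on $\|S(t)Q^{\frac{1}{2}}\|_{\mathscr L_2(U)}^2$ as $t \downarrow 0$, obtained from the positive definiteness of $Q$ together with the strong continuity of the semigroup. All inequalities below are to be read in $[0,\infty]$, so that the argument also covers the case in which $S(t)Q^{\frac{1}{2}}$ fails to be Hilbert--Schmidt (where the integrand is simply $+\infty$).

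First I would exploit that $Q \in \mathscr L^+(U)$ is bounded below: there exists $\theta \in (0,\infty)$ with $\|Q^{\frac{1}{2}}y\|_U^2 = \langle Qy, y\rangle_U \geq \theta\|y\|_U^2$ for all $y \in U$. Using the invariance of the Hilbert--Schmidt norm under adjoints, $\|T\|_{\mathscr L_2(U)} = \|T^*\|_{\mathscr L_2(U)}$ (valid in the extended sense), together with the self-adjointness of $Q^{\frac{1}{2}}$, I obtain for every $t > 0$ and any orthonormal basis $(e_j)_{j\in\N}$ of $U$ that
\begin{equation*}
	\|S(t)Q^{\frac{1}{2}}\|_{\mathscr L_2(U)}^2
	=
	\|Q^{\frac{1}{2}}S(t)^*\|_{\mathscr L_2(U)}^2
	=
	\sum_{j\in\N}\|Q^{\frac{1}{2}}S(t)^* e_j\|_U^2
	\geq
	\theta\sum_{j\in\N}\|S(t)^* e_j\|_U^2
	=
	\theta\|S(t)\|_{\mathscr L_2(U)}^2.
\end{equation*}
Since the Hilbert--Schmidt norm dominates the operator norm, this yields $\|S(t)Q^{\frac{1}{2}}\|_{\mathscr L_2(U)}^2 \geq \theta\|S(t)\|_{\mathscr L(U)}^2$.

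Next I would extract the lower bound near the origin. Fixing a unit vector $x_0 \in U$, strong continuity of $(S(t))_{t\geq0}$ gives $\|S(t)x_0\|_U \to \|x_0\|_U = 1$ as $t \downarrow 0$, so there exists $\delta > 0$ with $\|S(t)\|_{\mathscr L(U)} \geq \|S(t)x_0\|_U \geq \tfrac{1}{2}$ for all $t \in (0,\delta)$. Combining this with the previous bound gives $\|S(t)Q^{\frac{1}{2}}\|_{\mathscr L_2(U)}^2 \geq \theta/4$ on $(0,\delta)$, and therefore
\begin{equation*}
	\int_0^\infty \| t^{\gamma_0-1} S(t) Q^{\frac{1}{2}} \|_{\mathscr L_2(U)}^2 \rd t
	=
	\int_0^\infty t^{2\gamma_0-2}\|S(t)Q^{\frac{1}{2}}\|_{\mathscr L_2(U)}^2 \rd t
	\geq
	\frac{\theta}{4}\int_0^\delta t^{2\gamma_0-2}\rd t.
\end{equation*}
As $\gamma_0 \leq \tfrac{1}{2}$ forces $2\gamma_0 - 2 \leq -1$, the remaining integral $\int_0^\delta t^{2\gamma_0-2}\rd t$ diverges, which completes the argument.

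The only genuine subtlety---and the point worth stating carefully---is that one must \emph{not} assume $S(t)Q^{\frac{1}{2}}$ is Hilbert--Schmidt: in infinite dimensions $Q^{\frac{1}{2}}$ is never Hilbert--Schmidt (being bounded below it is not even compact), and without analyticity $S(t)$ need not regularize. Phrasing every estimate as an inequality in $[0,\infty]$ makes the proof uniform across these cases. The substantive idea, by contrast, is routine once isolated: the divergence is driven entirely by the factor $t^{2\gamma_0-2}$ at $t = 0$, and all that is needed is that the operator-theoretic factor stays bounded away from $0$ there, which follows from $Q$ being bounded below and $S(0) = \id_U$.
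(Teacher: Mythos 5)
Your proof is correct and follows essentially the same route as the paper's: both establish a uniform positive lower bound for the integrand on some interval $(0,\delta)$ near the origin and conclude from the divergence of $\int_0^\delta t^{2\gamma_0-2}\rd t$. The paper gets the lower bound a bit more directly, namely by fixing a unit vector $x$ and using the continuity of $t \mapsto S(t)Q^{\frac{1}{2}}x$ at $t=0$ together with $\|Q^{\frac{1}{2}}x\|_U > 0$, whereas you first peel off the constant $\theta$ from the positive definiteness of $Q$ and then apply strong continuity to $S(t)$ itself; this difference is cosmetic.
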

\begin{proof}
	Fix some $x \in U$ with $\| x \|_U = 1$. Then
	$\| S(t) Q^\frac{1}{2} \|_{\mathscr L_2(U)} \ge \| S(t) Q^\frac{1}{2}x \|_{U}$ for all $t \in [0,\infty)$.
	Since $t \mapsto  S(t) Q^\frac{1}{2}x$ is continuous at zero
	and $S(0) Q^\frac{1}{2}x = Q^\frac{1}{2}x$, we can choose $\delta \in (0,\infty)$
	so small that $\| S(t) Q^\frac{1}{2}x \|_U \ge \frac{1}{2} \| Q^\frac{1}{2}x \|_U$ for all $t \in [0,\delta]$.
	If $\gamma_0 \in (-\infty, \nicefrac{1}{2}]$, we then obtain
	\[
	\int_0^\infty \| t^{\gamma_0-1} S(t) Q^\frac{1}{2} \|_{\mathscr L_2(U)}^2 \rd t 
	\ge 
	\frac{1}{2} \| Q^\frac{1}{2}x \|_U \int_0^\delta t^{2(\gamma_0-1)}\rd t
	=
	\infty.
	\qedhere
	\]
\end{proof}

\begin{lemma}\label{lem:intb-assumption-nestedness}
	Let Assumption~\textup{\ref{ass:standing}} be satisfied.
	If Assumptions~\textup{\ref{ass:HAQ}\ref{ass:HAQ:1}}
	holds for some $\gamma_0 \in (\nicefrac{1}{2}, \infty)$, then 
	it also holds for all $\gamma' \in [\gamma_0, \infty)$.
\end{lemma}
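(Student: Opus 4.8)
The plan is to bound the quantity $\int_0^\infty \norm{t^{\gamma'-1} S(t) Q^\frac{1}{2}}{\LO_2(U)}^2 \rd t = \int_0^\infty t^{2(\gamma'-1)} g(t) \rd t$, where $g(t) := \norm{S(t) Q^\frac{1}{2}}{\LO_2(U)}^2 \in [0,\infty]$, by splitting the domain of integration into $(0,1]$ and $[1,\infty)$ and reducing each piece to the corresponding integral with $\gamma_0$ in place of $\gamma'$, which is finite by Assumption~\ref{ass:HAQ}\ref{ass:HAQ:1}. Writing $\gamma' = \gamma_0 + \alpha$ with $\alpha \ge 0$, the contribution on $(0,1]$ is immediate: for $t \in (0,1]$ the map $a \mapsto t^a$ is nonincreasing, so $t^{2(\gamma'-1)} \le t^{2(\gamma_0-1)}$, and therefore $\int_0^1 t^{2(\gamma'-1)} g(t) \rd t \le \int_0^1 t^{2(\gamma_0-1)} g(t) \rd t < \infty$.

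The essential point is the tail $[1,\infty)$, where the extra polynomial factor $t^{2\alpha}$ must be controlled. Here I would exploit the semigroup property together with exponential stability~\eqref{eq:semigroup-est}: factoring $S(t) = S(t/2) S(t/2)$ and using the ideal property of Hilbert--Schmidt operators gives, for every $t > 0$,
\[
	g(t) = \norm{S(t/2) \, S(t/2) Q^\frac{1}{2}}{\LO_2(U)}^2
	\le \norm{S(t/2)}{\LO(U)}^2 \, g(t/2)
	\le M_0^2 e^{-wt} g(t/2),
\]
which holds as an inequality in $[0,\infty]$ irrespective of whether $g(t/2)$ is finite. The exponential factor $e^{-wt}$ thus gained absorbs the unwanted power: since $t \mapsto t^{2\alpha} e^{-wt}$ is continuous on $[1,\infty)$ and vanishes at infinity, it is bounded there by some $C_\alpha \in [0,\infty)$, whence $t^{2(\gamma'-1)} e^{-wt} = t^{2\alpha} e^{-wt} \cdot t^{2(\gamma_0-1)} \le C_\alpha \, t^{2(\gamma_0-1)}$ for all $t \ge 1$.

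Combining these estimates and substituting $u := t/2$ then yields
\[
	\int_1^\infty t^{2(\gamma'-1)} g(t) \rd t
	\le M_0^2 C_\alpha \int_1^\infty t^{2(\gamma_0-1)} g(t/2) \rd t
	= M_0^2 C_\alpha \, 2^{2\gamma_0 - 1} \int_{1/2}^\infty u^{2(\gamma_0-1)} g(u) \rd u,
\]
and the rightmost integral is dominated by $\int_0^\infty u^{2(\gamma_0-1)} g(u) \rd u < \infty$; adding the two contributions gives the claim. I expect the only real obstacle to be bookkeeping: one must choose the symmetric splitting $S(t)=S(t/2)S(t/2)$ so that the full decay rate $w$ is genuinely gained — using instead $S(t)=S(t-1)S(1)$ would force an a priori pointwise finiteness of $g$ at a fixed time — and one must track the constants through the change of variables. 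Since every integrand here is nonnegative, no integrability need be assumed in advance, as finiteness of the final bound retroactively justifies each step.
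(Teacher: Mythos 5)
Your proof is correct and rests on the same key mechanism as the paper's: the symmetric factorization $S(t) = S(t/2)\,S(t/2)$ combined with exponential stability~\eqref{eq:semigroup-est}, which trades the Hilbert--Schmidt norm at time $t$ for one at time $t/2$ at the price of a factor $M_0^2 e^{-wt}$ that then absorbs the extra polynomial power. The only difference is bookkeeping: you split the domain at $t=1$, handle $(0,1]$ by monotonicity of $t \mapsto t^{2(\gamma'-\gamma_0)}$, and control the tail via the supremum $C_\alpha = \sup_{t\ge1} t^{2\alpha}e^{-wt}$, whereas the paper applies the halving substitution globally and then sums over unit intervals $[k-1,k]$, bounding the resulting series by the root test --- both are valid renderings of the same estimate.
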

\begin{proof}
	The change of variables $\tau := t/2$, the semigroup property and~\eqref{eq:semigroup-est} yield
	\begin{align*}
		\int_0^\infty \| t^{\gamma' - 1} S(t)Q^\frac{1}{2} \|_{\mathscr L_2(U)}^2 \rd t
		&\le 
		2^{2\gamma' - 1} M_0^2 \int_0^\infty e^{-2w\tau} \| \tau^{\gamma' - 1} S(\tau)Q^\frac{1}{2} \|_{\mathscr L_2(U)}^2 \rd \tau.
	\end{align*}
	For the latter integral, we split up the domain of integration and estimate
	each of the resulting integrands to find
	\begin{align*}
		\int_0^\infty &e^{-2w\tau} \| \tau^{\gamma' - 1} S(\tau)Q^\frac{1}{2} \|_{\mathscr L_2(U)}^2 \rd \tau
		=
		\sum_{k=1}^{\infty} \int_{k-1}^k e^{-2w\tau} \| \tau^{\gamma' - 1} S(\tau)Q^\frac{1}{2} \|_{\mathscr L_2(U)}^2 \rd \tau
		\\
		&\le 
		\sum_{k=1}^{\infty} e^{-2w(k-1)} k^{2(\gamma' - \gamma_0)} \int_0^\infty  \| \tau^{\gamma_0-1} S(\tau)Q^\frac{1}{2} \|_{\mathscr L_2(U)}^2 \rd \tau 
		< \infty,
	\end{align*}
	where the series converges since $|e^{-2wk} k^{2(\gamma' - \gamma_0)}|^{\frac{1}{k}} \to e^{-2w} < 1$
	as $k \to \infty$.
\end{proof}

\subsection{Filtrations indexed by the real line}

The proof of Proposition~\ref{prop:filtr-rightcont} below elaborates on~\cite[Example~3.6]{BOGP2010}.
First we state and prove a lemma which shows that collections of random variables
are generated by a $\pi$-system consisting of finite products of bounded, real-valued random variables.
\begin{lemma}\label{lem:sigma-algebra-collection-RVs}
	If $(X_i)_{i\in\cI}$ is a collection of $U$-valued random variables, then
	\begin{equation}\label{eq:join-sigma-RVs-generated-by-products}
		\sigma(X_i : i \in \cI)
		=
		\sigma\left( \prod\nolimits_{i \in \cJ} Z_i : \cJ \subseteq \cI \textup{ finite}, \, Z_i \in B_b(\Omega, \sigma(X_i)),
		\forall i \in \cJ\!\right)\!.
	\end{equation}
\end{lemma}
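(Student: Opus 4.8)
The plan is to prove the two inclusions between the $\sigma$-algebras separately. Write $\cG := \sigma(X_i : i \in \cI)$ for the left-hand side and let $\cG'$ denote the right-hand side, i.e.\ the $\sigma$-algebra generated by the family $\cC$ of all finite products $\prod_{i\in\cJ} Z_i$ with $\cJ \subseteq \cI$ finite and $Z_i \in B_b(\Omega, \sigma(X_i))$ for each $i \in \cJ$.

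For the inclusion $\cG' \subseteq \cG$, I would simply observe that every generator is $\cG$-measurable: each $Z_i$ is $\sigma(X_i)$-measurable and hence $\cG$-measurable since $\sigma(X_i) \subseteq \cG$, and a finite product of $\cG$-measurable real-valued functions is again $\cG$-measurable. Thus every element of $\cC$ is a $\cG$-measurable random variable, whence $\cG' = \sigma(\cC) \subseteq \cG$.

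The converse inclusion $\cG \subseteq \cG'$ is where the boundedness hypothesis is used, and the key point is that indicators are themselves bounded. Fixing $i \in \cI$ and $A \in \sigma(X_i)$, the indicator $\mathbf 1_A$ lies in $B_b(\Omega, \sigma(X_i))$, so choosing the singleton $\cJ = \{i\}$ exhibits $\mathbf 1_A$ as one of the generators in $\cC$. Consequently $A = \mathbf 1_A^{-1}[(\tfrac{1}{2}, \infty)] \in \sigma(\mathbf 1_A) \subseteq \cG'$. Since $A \in \sigma(X_i)$ and $i \in \cI$ were arbitrary, we obtain $\sigma(X_i) \subseteq \cG'$ for all $i$, and therefore $\cG = \sigma\bigl(\bigcup_{i\in\cI}\sigma(X_i)\bigr) \subseteq \cG'$. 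Combining the two inclusions yields the claimed equality.

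I do not expect a genuine obstacle here: the only point requiring a moment's care is recognizing that the singleton products ($|\cJ| = 1$) already recover each $\sigma(X_i)$ through its indicators, so the full multiplicative family $\cC$ is not needed for the equality of $\sigma$-algebras. The larger products serve instead to make $\cC$ a multiplicative (hence $\pi$-)system of bounded functions, which is the structural property actually invoked in the proof of Proposition~\ref{prop:filtr-rightcont}; for this it is worth noting in passing that $B_b(\Omega, \sigma(X_i))$ is closed under pointwise products, so that $\cC$ is indeed stable under multiplication, which is immediate from boundedness and measurability.
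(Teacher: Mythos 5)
Your proof is correct, and the easy inclusion is handled exactly as in the paper; but for the nontrivial inclusion $\sigma(X_i : i \in \cI) \subseteq \cG'$ you take a genuinely different and more elementary route. The paper first invokes the general identity
\[
\bigvee_{i \in \cI} \cA_i
=
\sigma\Bigl( \bigcap\nolimits_{i\in \cJ} A_i : \cJ \subseteq \cI \text{ finite}, \, A_i \in \cA_i \,\, \forall i \in \cJ\Bigr),
\]
writes $\sigma(X_i : i \in \cI)$ as generated by the finite intersections $\bigcap_{i\in\cJ} X_i^{-1}[B_i]$ with $B_i \in \cB(U)$, and then represents each such intersection as the preimage $\bigl(\prod_{i\in\cJ}\mathbf 1_{B_i}(X_i)\bigr)^{-1}[\{1\}]$ of a genuine multi-factor product of indicators; the multiplicative structure of the generating family is thus used in an essential way. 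You instead shortcut this entirely: since $\mathbf 1_A \in B_b(\Omega, \sigma(X_i))$ for every $A \in \sigma(X_i)$, the singleton products ($|\cJ| = 1$) already give $\sigma(X_i) \subseteq \cG'$ for each $i$, and the join is then contained in $\cG'$ because $\cG'$ is a $\sigma$-algebra containing every $\sigma(X_i)$. This avoids the intersection-generator identity altogether and is arguably cleaner. Your closing remark is also exactly right, and is the real point of comparison: the multi-factor products are irrelevant for the equality of $\sigma$-algebras but are what make the generating family closed under pointwise multiplication, i.e.\ a multiplicative class, which is the property actually exploited when the lemma is combined with the monotone class theorem in the proof of Proposition~\ref{prop:filtr-rightcont}. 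The paper's proof, by contrast, makes visible how finite intersections of the canonical events correspond to products, which is thematically closer to that downstream use; logically, both arguments are complete.
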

\begin{proof}
	The ``$\supseteq$'' inclusion holds because any $Z = \prod_{i\in \cJ} Z_i$ is
	measurable with respect to $\bigvee_{i\in \cJ}  \sigma(X_i) \subseteq \bigvee_{i\in\cI} \sigma(X_i)$
	for any $\cJ \subseteq \cI$.
	For the converse inclusion, note that the join of a family of $\sigma$-algebras $(\cA_i)_{i\in\cI}$
	has the following general property:
	\[
	\bigvee_{i \in \cI} \cA_i 
	=
	\sigma\left( \bigcap\nolimits_{i\in \cJ} A_i : \cJ \subseteq \cI \text{ finite}, \, A_i \in \cA_i \,\,
	\forall i \in \cJ\right)\!.
	\]
	Thus, by definition of $\sigma(X_i)$, we obtain (see also~\cite[Chapter~0, Theorem~2.3]{RevuzYor1999})
	\begin{equation}\label{eq:join-sigma-RVs}
		\sigma(X_i : i \in \cI)
		=
		\sigma\left(\bigcap\nolimits_{i \in \cJ} X_i^{-1}[B_i] : \cJ \subseteq \cI \text{ finite}, \, B_i \in \cB(U) \,\,
		\forall i \in \cJ\right)\!.
	\end{equation}
	Since $\bigcap_{i\in \cJ} X_i^{-1}[B_i] = (\prod_{i\in \cJ} \mathbf 1_{B_i}(X_i))^{-1}[\{1\}]$
	for any finite $\{B_i : i \in \cJ\} \subseteq \cB(U)$,
	we find that the right-hand side of~\eqref{eq:join-sigma-RVs} is contained
	in that of~\eqref{eq:join-sigma-RVs-generated-by-products}.
\end{proof}

\begin{proposition}\label{prop:filtr-rightcont}
	Let $(W^Q(t))_{t\in\bbR}$ be a $U$-valued two-sided $Q$-Wiener process with covariance
	operator $Q \in \LO_1^+(U)$,
	see Subsection~\ref{subsec:prelims:stochastic-int-twosided-Wiener}.
	The filtration 
	$(\cF^{\delta W^Q}_t)_{t\in\bbR}$ defined in~\eqref{eq:increment-filtration}
	is right-continuous, i.e., we have $\cF_{t+}^{\delta W^Q} := \bigcap_{\varepsilon>0} \cF_{t+\varepsilon}^{\delta W^Q} = \cF_t^{\delta W^Q}$
	for all $t \in \bbR$.
\end{proposition}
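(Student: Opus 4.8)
The plan is to fix $t \in \bbR$ and prove the nontrivial inclusion $\cF_{t+}^{\delta W^Q} \subseteq \cF_t^{\delta W^Q}$, the reverse being immediate from monotonicity. Writing $\cG := \sigma(W^Q(u) - W^Q(s) : s < u) \vee \sigma(\cN_\bbP)$ for the $\sigma$-algebra generated by \emph{all} increments, every bounded $\cF_{t+}^{\delta W^Q}$-measurable random variable is $\cG$-measurable, since $\cF_{t+}^{\delta W^Q} \subseteq \cF_{t+\varepsilon}^{\delta W^Q} \subseteq \cG$. Hence it suffices to establish
\[
\E[Y \mid \cF_{t+}^{\delta W^Q}] = \E[Y \mid \cF_t^{\delta W^Q}], \quad \bbP\text{-a.s.},
\]
for every bounded $\cG$-measurable $Y$: applied to a bounded $\cF_{t+}^{\delta W^Q}$-measurable $Y$ this gives $Y = \E[Y \mid \cF_{t+}^{\delta W^Q}] = \E[Y \mid \cF_t^{\delta W^Q}]$, which is $\cF_t^{\delta W^Q}$-measurable by completeness of the latter, so that $\cF_{t+}^{\delta W^Q} \subseteq \cF_t^{\delta W^Q}$.

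To verify the displayed identity I would use a functional monotone class argument. The collection $\cH$ of bounded $\cG$-measurable $Y$ satisfying it is a vector space containing the constants and closed under bounded monotone limits (by conditional monotone convergence). By Lemma~\ref{lem:sigma-algebra-collection-RVs}, $\cG$ is generated by the multiplicative class of finite products $\prod_i f_i(W^Q(v_i) - W^Q(u_i))$ with $f_i \in C_b(U)$; since any increment straddling $t$ splits as $[W^Q(t) - W^Q(u)] + [W^Q(v) - W^Q(t)]$, this class still generates $\cG$ when the increments are required to lie in $(-\infty, t]$ or in $[t, \infty)$. It therefore suffices to check $Y \in \cH$ for a product $Y = Y_- Y_+$, where $Y_-$ is a product of $C_b(U)$-functions of increments contained in $(-\infty, t]$ (hence $\cF_t^{\delta W^Q}$-measurable) and $Y_+ = \prod_k f_k(W^Q(v_k) - W^Q(u_k))$ with $t \le u_k < v_k$. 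As $Y_-$ is bounded and $\cF_t^{\delta W^Q} \subseteq \cF_{t+}^{\delta W^Q}$-measurable, both conditional expectations reduce to computing $\E[Y_+ \mid \cF_t^{\delta W^Q}]$ and $\E[Y_+ \mid \cF_{t+}^{\delta W^Q}]$.

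For the $\cF_t^{\delta W^Q}$-part I would invoke that, $W^Q$ being Gaussian with independent increments, the families of increments over $(-\infty, t]$ and over $[t, \infty)$ are jointly Gaussian with vanishing cross-covariances, hence independent; after augmentation this gives $\cG_{[t,\infty)} \ind \cF_t^{\delta W^Q}$, where $\cG_{[a,\infty)} := \sigma(W^Q(v) - W^Q(u) : a \le u < v) \vee \sigma(\cN_\bbP)$, so that $\E[Y_+ \mid \cF_t^{\delta W^Q}] = \E[Y_+]$. The crux is the corresponding statement for $\cF_{t+}^{\delta W^Q}$, where this independence fails. Here I would approximate: for $n$ so large that $t + \tfrac1n < \min_k v_k$, set $Y_+^{(n)} := \prod_k f_k(W^Q(v_k) - W^Q(u_k \vee (t + \tfrac1n)))$, which is $\cG_{[t+1/n, \infty)}$-measurable and thus independent of $\cF_{t+1/n}^{\delta W^Q}$ by the same argument, giving $\E[Y_+^{(n)} \mid \cF_{t+1/n}^{\delta W^Q}] = \E[Y_+^{(n)}]$. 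Path continuity yields $W^Q(u_k \vee (t+\tfrac1n)) \to W^Q(u_k)$, whence $Y_+^{(n)} \to Y_+$ boundedly and therefore in $L^1(\Omega)$. Splitting $Y_+ = Y_+^{(n)} + (Y_+ - Y_+^{(n)})$ then shows $\E[Y_+ \mid \cF_{t+1/n}^{\delta W^Q}] \to \E[Y_+]$ in $L^1(\Omega)$; on the other hand, since $(\cF_{t+1/n}^{\delta W^Q})_n$ decreases to $\cF_{t+}^{\delta W^Q}$, the backward martingale convergence theorem (\cite[Section~12.7, Theorem~4]{GrimmettStirzaker2001}, as in the proof of Proposition~\ref{prop:Markov-relations}) gives $\E[Y_+ \mid \cF_{t+1/n}^{\delta W^Q}] \to \E[Y_+ \mid \cF_{t+}^{\delta W^Q}]$. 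Uniqueness of the $L^1$-limit forces $\E[Y_+ \mid \cF_{t+}^{\delta W^Q}] = \E[Y_+]$, so both conditional expectations of $Y$ equal $Y_- \E[Y_+]$ and $Y \in \cH$; the monotone class theorem then extends the identity to all bounded $\cG$-measurable $Y$.

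I expect the main obstacle to be exactly this last computation of $\E[Y_+ \mid \cF_{t+}^{\delta W^Q}]$: unlike the $\cF_t^{\delta W^Q}$-case, the increments in $Y_+$ starting at $t$ are \emph{not} independent of $\cF_{t+}^{\delta W^Q}$, so one cannot read off the conditional expectation directly. It is the combination of the left-endpoint shift (which restores independence at level $t + \tfrac1n$) with the backward martingale convergence theorem that makes the germ at $t^{+}$ collapse; some care is needed to ensure $Y_+^{(n)}$ is well defined for large $n$ and converges to $Y_+$ in $L^1(\Omega)$, and to justify the joint-independence upgrade $\cG_{[a,\infty)} \ind \cF_a^{\delta W^Q}$ from the single-increment property~\eqref{item:WP3prime} via Gaussianity.
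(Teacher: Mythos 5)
Your proposal is correct and shares the paper's overall skeleton: the same reduction to proving $\E[Z \mid \cF_{t+}^{\delta W^Q}] = \E[Z \mid \cF_{t}^{\delta W^Q}]$ for all bounded $\cF_\infty^{\delta W^Q}$-measurable $Z$ (with the identical completeness argument), followed by a functional monotone class argument over a multiplicative class of past/future products. Where you genuinely diverge is in how the factor involving increments near $t$ is handled. The paper's class $\mathscr C$ only admits future factors $Z_2$ built from increments whose left endpoints are at least $t+\varepsilon$ for some $\varepsilon>0$; then $Z_2 \ind \cF_{t+}^{\delta W^Q}$ holds outright, both conditional expectations collapse to $Z_1\E[Z_2]$ by independence alone, and no martingale convergence is needed anywhere --- the price is paid in the generation step, where a.s.\ (right-)continuity of the paths is used to show that increments starting exactly at $t$ are still captured, i.e.\ $\sigma(\mathscr C) = \cF_\infty^{\delta W^Q}$. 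You instead allow left endpoints equal to $t$, which makes generation immediate (split straddling increments at $t$), and pay the price in the conditioning step: independence fails for $\cF_{t+}^{\delta W^Q}$, and you restore the identity by shifting left endpoints to $t + 1/n$ and combining the $L^1$-approximation with the backward martingale convergence theorem --- a tool the paper already invokes elsewhere (proof of Proposition~\ref{prop:Markov-relations}). Both routes rely on path continuity and both are valid; the paper's is marginally leaner, while yours localizes the germ-$\sigma$-algebra subtlety in one standard theorem. As for the joint-independence upgrade you flag at the end: it is indeed needed, but the paper's own proof silently makes the same leap when it declares $Z_2$ independent of $\cF_{t+}^{\delta W^Q}$ by~\eqref{item:WP3prime} (a single-increment statement), so your explicit appeal to Gaussianity and vanishing cross-covariances fills a gap common to both arguments rather than creating one.
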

\begin{proof}
	Fix $t \in \bbR$. 
	It suffices to prove that
	\begin{equation}\label{eq:right-continuity-filtration-Z}
		\E[Z \mid \cF^{\delta W^Q}_{t+}]
		=
		\E[Z \mid \cF^{\delta W^Q}_{t}],
		\quad
		\bbP\text{-a.s.},
	\end{equation}
	for all $Z \in B_b(\Omega, \cF_\infty^{\delta W^Q})$, where 
	$\cF_\infty^{\delta W^Q} := \sigma(W_{s,u} : s \le u)$,
	using the shorthand notation $W_{s,u} = W^Q(u) - W^Q(s)$.
	Indeed,
	given $A \in \cF_{t+}^{\delta W^Q} \subseteq \cF_\infty^{\delta W^Q}$,
	equation~\eqref{eq:right-continuity-filtration-Z} would imply that 
	$\mathbf 1_A$ is $\bbP$-a.s.\ equal to the $\cF_{t}^{\delta W^Q}$-measurable random variable
	$\E[\mathbf 1_A \mid \cF^{\delta W^Q}_{t}]$,
	and thus $A \in \cF_{t}^{\delta W^Q}$ by the completeness of $(\cF_t^{\delta W^Q})_{t\in\R}$.
	This yields $\cF_{t+}^{\delta W^Q} \subseteq \cF_{t}^{\delta W^Q}$\!.
	
	Let $\mathscr H$ denote the linear space of bounded random variables
	$Z \from \Omega \to \bbR$ for which~\eqref{eq:right-continuity-filtration-Z}
	holds; we want to show that $B_b(\Omega, \cF_{\infty}^{\delta W^Q}) \subseteq \mathscr H$
	using the monotone class theorem~\cite[Chapter~0, Theorem~2.2]{RevuzYor1999}.
	Let $\mathscr C$ denote
	the set of random variables
	$Z \from \Omega\to\bbR$
	which can be written as a product of
	two bounded random variables $Z_1, Z_2 \from \Omega\to\bbR$, 
	where $Z_1$ is $\cF_t^{\delta W^Q}$-measurable
	and
	$Z_2$ is measurable with respect to 
	$\sigma(W_{s,u} : t + \varepsilon \le s \le u) \vee \sigma(\cN_{\bbP})$
	for some $\varepsilon>0$.
	We first observe that $\mathscr C \subseteq \mathscr H$. Indeed,
	for all $Z \in \cK$ with $Z = Z_1 Z_2$ as above,
	\[
	\E[Z \mid \cF_{t+}^{\delta W^Q}]
	=
	\E[Z_1 Z_2 \mid \cF_{t+}^{\delta W^Q}]
	=
	Z_1 \E[Z_2]
	=
	\E[Z_1 Z_2 \mid \cF_{t}^{\delta W^Q}]
	=
	\E[Z \mid \cF_{t}^{\delta W^Q}],
	\]
	$\bbP$-a.s.,
	since $Z_1$ is measurable with respect to 
	$\cF_{t}^{\delta W^Q} \subseteq \cF_{t+}^{\delta W^Q}$,
	while $Z_2$ is independent of 
	$\cF_{t+}^{\delta W^Q} \supseteq \cF_{t}^{\delta W^Q}$ by~\eqref{item:WP3prime}. 
	We note moreover
	that $\mathscr C$ is closed under pointwise multiplication and that
	$\mathscr H$ 
	contains all
	constant functions and satisfies $Z \in \mathscr H$ whenever
	$(Z_n)_{n\in\N} \subseteq \mathscr H$ is such that $0 \le Z_n \uparrow Z$ by conditional
	monotone convergence.
	Thus the conditions of the monotone class theorem are indeed satisfied, hence
	$B_b(\Omega, \sigma(\mathscr C)) \subseteq \mathscr H$
	and it remains to show $\sigma(\mathscr C) = \cF^{\delta W^Q}_{\infty}$\!.
	
	The inclusion $\sigma(\mathscr C) \subseteq \cF^{\delta W^Q}_{\infty}$
	follows from the fact that any random variables $Z_1$ and $Z_2$ as above
	are both $\cF^{\delta W^Q}_{\infty}$-measurable,
	implying the same for $Z$.
	For the converse inclusion, we first split $\cF_\infty^{\delta W^Q}$ at time $t$
	as follows:
	\begin{equation}\label{eq:Finfty-join-at-t}
		\cF^{\delta W^Q}_{\infty} 
		= 
		\sigma(W_{s,u} : s \le u)
		=
		\cF_t^{\delta W^Q} \vee \sigma(W_{s,u} : t \le s \le u).
	\end{equation}
	Since $W^Q$ is (right-)continuous, $\bbP$-a.s., we have in fact
	\[
	\cF^{\delta W^Q}_{\infty} 
	=
	\cF_t^{\delta W^Q} \vee \sigma(W_{s,u} : t < s \le u),
	\]
	which can also be written as
	\[
	\cF_\infty^{\delta W^Q} = \sigma(W_{s,u} : (s, u) \in \cI) \vee \sigma(\cN_{\bbP}),
	\]
	where 
	$\cI := \cI_1 \cup \cI_2$
	with
	$\cI_1 := \{(s,u) : s \le u \le t \}$ and
	$\cI_2 := \{(s,u) :  t < s \le u\}$.
	An application of Lemma~\ref{lem:sigma-algebra-collection-RVs}
	to the above equation shows that $\cF_{\infty}^{\delta W^Q}$ is equal to
	\[
	\sigma\biggl( 
	\prod\nolimits_{(s,u) \in \cJ} Y_{(s,u)} 
	: 
	\cJ \subseteq \cI \textup{ finite}, \,
	Y_{(s,u)} \in B_b(\Omega, \sigma(W_{s,u})) \,\,
	\forall (s,u) \in \cJ\!
	\biggr)\! 
	\vee 
	\sigma(\cN_{\bbP}).
	\]
	Consider
	$Z := \prod_{(s,u) \in \cJ} Y_{(s,u)}$
	for some finite $\cJ \subseteq \cI$.
	Define
	$Z_1 :=  \prod_{(s,u) \in \cJ \cap \cI_1} Y_{(s,u)}$
	and $Z_2 := \prod_{(s,u) \in \cJ \cap \cI_2} Y_{(s,u)}$.
	Then we have $Z = Z_1 Z_2$, with $Z_1$ and $Z_2$
	as in the definition of $\mathscr C$, finishing the proof of the identity
	$\cF_\infty^{\delta W^Q} = \sigma(\mathscr C)$.
\end{proof}

\begin{proposition}\label{prop:WP-R-not-martingale}
	A process
	$(W^Q(t))_{t\in \bbR}$ satisfying 
	\textup{\ref{item:WP1}} cannot be a martingale with respect to any filtration $(\cF_t)_{t\in\bbR}$.
\end{proposition}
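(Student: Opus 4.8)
The plan is to argue by contradiction: assuming $(W^Q(t))_{t\in\bbR}$ is a martingale with respect to some filtration $(\cF_t)_{t\in\bbR}$, I would extract a quantitative consequence of the martingale property that is incompatible with the growth of the increment covariances prescribed by~\ref{item:WP1} as time runs to $-\infty$. The only feature of the martingale property I need is that $t \mapsto \E[\norm{W^Q(t)}{U}]$ is non-decreasing. Indeed, since the norm $\norm{\,\cdot\,}{U}$ is convex, the conditional Jensen inequality applied to $W^Q(s) = \E[W^Q(t) \mid \cF_s]$ for $s \le t$ gives $\norm{W^Q(s)}{U} \le \E[\norm{W^Q(t)}{U} \mid \cF_s]$ $\bbP$-a.s., and taking expectations yields $\E[\norm{W^Q(s)}{U}] \le \E[\norm{W^Q(t)}{U}]$. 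Note that $W^Q(t) \in L^1(\Omega; U)$ for every $t$ is guaranteed both by the integrability required in the definition of a martingale and by~\ref{item:WP1}, which presupposes that the mean of $W^Q(t)$ exists.

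Next I would quantify the size of the increments. Fix $G \sim N(0, Q)$; since $Q \in \LO_1^+(U)$ is non-degenerate we have $\E[\norm{G}{U}] \in (0,\infty)$, the finiteness following from $\E[\norm{G}{U}^2] = \tr Q < \infty$ and the strict positivity from $G \neq 0$, $\bbP$-a.s. By~\ref{item:WP1}, for $s \le 0$ the increment satisfies $W^Q(0) - W^Q(s) \sim N(0, |s| Q)$, and the scaling $W^Q(0) - W^Q(s) \overset{d}{=} \sqrt{|s|}\,G$ of centered Gaussian measures then gives
\[
\E[\norm{W^Q(0) - W^Q(s)}{U}] = \sqrt{|s|}\,\E[\norm{G}{U}], \quad s \le 0.
\]

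Finally I would combine the two observations. By the triangle inequality in $L^1(\Omega; U)$ and the monotonicity established above, for every $s \le 0$,
\[
\sqrt{|s|}\,\E[\norm{G}{U}]
= \E[\norm{W^Q(0) - W^Q(s)}{U}]
\le \E[\norm{W^Q(0)}{U}] + \E[\norm{W^Q(s)}{U}]
\le 2\,\E[\norm{W^Q(0)}{U}].
\]
The right-hand side is a finite constant independent of $s$, whereas the left-hand side tends to $\infty$ as $s \to -\infty$, which is the desired contradiction. The main subtlety to keep in mind is to avoid any argument requiring $W^Q(t) \in L^2(\Omega; U)$ or Gaussianity of $W^Q(t)$ itself, since~\ref{item:WP1} only prescribes the law of the increments and the existence of the mean; anchoring the estimate at the fixed time $0$ and exploiting the unbounded backward growth of the increment norm keeps the whole argument at the level of $L^1$ and sidesteps this issue.
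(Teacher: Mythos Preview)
Your proof is correct and takes a genuinely different route from the paper's. The paper projects onto a one-dimensional component $W^Q_h(t) := \scalar{W^Q(t), h}{U}$ with $\scalar{Qh, h}{U} > 0$, observes that $(W^Q_h(-n))_{n\in\N}$ would be a backward martingale, and invokes the backward martingale convergence theorem to force $L^1(\Omega)$-convergence --- contradicting the constant nonzero $L^1$-norm of the successive differences guaranteed by~\ref{item:WP1}. You instead stay in $U$, use only the monotonicity of $t \mapsto \E[\norm{W^Q(t)}{U}]$ obtained from conditional Jensen for the norm, and combine the exact scaling $\E[\norm{W^Q(0) - W^Q(s)}{U}] = \sqrt{|s|}\,\E[\norm{G}{U}]$ with the triangle inequality. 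Your argument is more elementary (it avoids the backward martingale convergence theorem entirely) and quantitatively exploits the $\sqrt{|s|}$ growth of the increments; the paper's version, by contrast, uses only stationarity and non-triviality of the increments and would go through even without any growth. Both approaches rest on the same underlying tension: the martingale property forces control of the process in backward time, while~\ref{item:WP1} makes the backward increments too large.
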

\begin{proof}
	Suppose that $(W^Q(t))_{t\in \bbR}$ is a martingale with respect to some filtration $(\cF)_{t\in\bbR}$.
	Then the same holds for the real-valued process $W_h^Q(t) := \scalar{W^Q(t), h}{U}$,
	where we choose $h \in U$ such that $\scalar{Qh,h}{U}^2 > 0$
	to ensure that $(W_h^Q(t))_{t\in\bbR}$ has nontrivial increments.
	In particular, $(W^Q_h(-n))_{n \in \N}$ is a backward martingale
	with respect to $(\cF_{-n})_{n \in \N}$, implying that
	it
	converges $\bbP$-a.s.\ and in $L^1(\Omega)$ as $n \to \infty$ 
	by the backward martingale convergence theorem \cite[Section~{12.7}, Theorem~4]{GrimmettStirzaker2001}. 
	But this contradicts~\ref{item:WP1}, since $(W^Q_h(-n))_{n \in \N}$
	cannot be a Cauchy sequence in $L^1(\Omega)$ as the process
	has (non-trivial) stationary increments.
\end{proof}

\subsection{Mean-square differentiability of stochastic convolutions}

The following lemma regarding 
mean-square continuity and differentiation under
the integral sign generalizes~\cite[Propositions~3.18 and~3.21]{KW2022} to stochastic convolutions
with respect to a two-sided Wiener process.

\begin{lemma}\label{lem:stoch-conv-Psi-diffb-under-int}
	Let $t_0 \in [-\infty, \infty)$ be such that
	$\Psi(\,\cdot\,)Q^\frac{1}{2}\in L^2(0, \infty; \mathscr L_2(U))$ and
	set $\bbT := [t_0, \infty)$ if $t_0 \in \bbR$ or $\bbT := \R$ if $t_0 = -\infty$.
	Then the stochastic convolution
	$(\int_{t_0}^t \Psi(t-s)\rd W^Q(s))_{t\in \bbT}$ is mean-square continuous.
	
	If $\Psi(\,\cdot\,)Q^\frac{1}{2}\in H^1_{0,\{0\}}(0, \infty; \mathscr L_2(U))$,
	then  
	$(\int_{t_0}^t \Psi(t-s)\rd W^Q(s))_{t\in \bbT}$ is mean-square differentiable on $\bbT$
	with 
	\begin{align}
		\forall
		t \in \bbT : \quad
		\frac{\rd}{\rd t}\int_{t_0}^{t} \Psi(t-s)\rd W^Q(s) 
		= 
		\int_{t_0}^t \partial_t \Psi(t-s) \rd W^Q(s),
		\quad 
		\bbP\text{-a.s.}
		\label{eq:stoch-conv-Psi-diffb-under-int-sign}
	\end{align}
\end{lemma}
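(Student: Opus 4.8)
The plan is to reduce both assertions to deterministic estimates in $L^2(0,\infty; \mathscr L_2(U))$ via the It\^o isometry~\eqref{eq:ito-R}. Writing the stochastic convolution as
\[
X(t) := \int_{\bbR} \mathbf 1_{(t_0, t]}(s)\, \Psi(t-s) \rd W^Q(s),
\qquad t \in \bbT,
\]
I first note that it is well defined for each $t$, since its integrand $\mathbf 1_{(t_0,t]}(\,\cdot\,)\Psi(t-\,\cdot\,)Q^\frac{1}{2}$ has squared $L^2(\bbR; \mathscr L_2(U))$-norm equal to $\int_0^{t-t_0}\norm{\Psi(r)Q^\frac{1}{2}}{\mathscr L_2(U)}^2 \rd r < \infty$. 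For mean-square continuity, I would apply~\eqref{eq:ito-R} to the increment $X(t+h) - X(t)$ and, after the substitutions $r = t-s$ and $r = t+h-s$ in the two resulting pieces, obtain for $h > 0$
\[
\norm{X(t+h) - X(t)}{L^2(\Omega; U)}^2
=
\int_0^{t-t_0}\norm{[\Psi(r+h) - \Psi(r)]Q^\frac{1}{2}}{\mathscr L_2(U)}^2 \rd r
+
\int_0^h \norm{\Psi(r)Q^\frac{1}{2}}{\mathscr L_2(U)}^2 \rd r.
\]
The first term vanishes as $h \to 0$ by continuity of translation in $L^2(0,\infty; \mathscr L_2(U))$ and the second by absolute continuity of the Lebesgue integral; the case $h < 0$ is symmetric.

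For the differentiability claim, I would set $Y(t) := \int_{t_0}^t [\partial_t\Psi](t-s)\rd W^Q(s)$, which exists because $g := [\partial_t\Psi](\,\cdot\,)Q^\frac{1}{2} = \partial_t[\Psi(\,\cdot\,)Q^\frac{1}{2}]$ lies in $L^2(0,\infty; \mathscr L_2(U))$ by hypothesis. Applying~\eqref{eq:ito-R} to the difference quotient minus $Y(t)$ and performing the same substitutions gives, for $h > 0$,
\[
\Norm{\tfrac{1}{h}[X(t+h) - X(t)] - Y(t)}{L^2(\Omega; U)}^2
=
\int_0^{t-t_0}\Norm{\tfrac{1}{h}[\Psi(r+h) - \Psi(r)]Q^\frac{1}{2} - g(r)}{\mathscr L_2(U)}^2 \rd r
+
\frac{1}{h^2}\int_0^h \norm{\Psi(r)Q^\frac{1}{2}}{\mathscr L_2(U)}^2 \rd r.
\]
The first, ``bulk,'' term tends to $0$: writing $\tfrac{1}{h}[\Psi(r+h) - \Psi(r)]Q^\frac{1}{2} = \tfrac{1}{h}\int_0^h g(r+\sigma)\rd\sigma$ and applying Minkowski's integral inequality~\cite[Section~A.1]{Stein1970}, its $L^2$-distance from $g$ is bounded by $\tfrac{1}{h}\int_0^h \norm{g(\,\cdot\,+\sigma) - g}{L^2(0,\infty; \mathscr L_2(U))}\rd\sigma$, which vanishes as $h \to 0$, again by translation continuity.

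The main obstacle---and the only place where the precise space $H^1_{0,\{0\}}(0,\infty; \mathscr L_2(U))$ is used---is the second, ``boundary,'' term $\tfrac{1}{h^2}\int_0^h \norm{\Psi(r)Q^\frac{1}{2}}{\mathscr L_2(U)}^2\rd r$. Here I would exploit the vanishing trace at $0$ built into $H^1_{0,\{0\}}$: since $\Psi(\,\cdot\,)Q^\frac{1}{2}$ is absolutely continuous with $\Psi(0)Q^\frac{1}{2} = 0$, the fundamental theorem of calculus yields $\Psi(r)Q^\frac{1}{2} = \int_0^r g(\rho)\rd\rho$, whence the Cauchy--Schwarz inequality gives $\norm{\Psi(r)Q^\frac{1}{2}}{\mathscr L_2(U)}^2 \le r\int_0^r \norm{g(\rho)}{\mathscr L_2(U)}^2 \rd\rho$. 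Substituting this bound and using $\int_0^r \norm{g}{\mathscr L_2(U)}^2\rd\rho \le \int_0^h \norm{g}{\mathscr L_2(U)}^2\rd\rho$ for $r \le h$ leads to
\[
\frac{1}{h^2}\int_0^h \norm{\Psi(r)Q^\frac{1}{2}}{\mathscr L_2(U)}^2\rd r
\le
\frac{1}{2}\int_0^h \norm{g(\rho)}{\mathscr L_2(U)}^2\rd\rho
\xrightarrow{h\to 0} 0,
\]
completing the argument (the case $h < 0$ being analogous). Combining the two vanishing terms shows that $\tfrac{1}{h}[X(t+h)-X(t)] \to Y(t)$ in $L^2(\Omega; U)$, which identifies the mean-square derivative with $Y(t)$ and thereby establishes~\eqref{eq:stoch-conv-Psi-diffb-under-int-sign}.
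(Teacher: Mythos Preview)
Your proof is correct and takes essentially the same approach as the paper's: reduce via the It\^o isometry to deterministic $L^2$-estimates, handle the bulk term by translation continuity in $L^2(0,\infty; \mathscr L_2(U))$, and control the boundary contribution using the vanishing trace at zero encoded in $H^1_{0,\{0\}}$. The paper's version is terser, outsourcing the details to \cite[Propositions~3.18 and~3.21]{KW2022} and only indicating which integrals change when $t_0$ is general, while you spell out the full argument (including the Minkowski and Cauchy--Schwarz estimates for the two pieces).
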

\begin{proof} 
	Adapting the proof of~\cite[Proposition~3.18]{KW2022}
	amounts to establishing that
	\[
	\lim_{h \to 0} 
	\int_{0 \vee (-h)}^\infty 
	\| [\Psi(u + h) - \Psi(u)] Q^\frac{1}{2}\|^2_{\mathscr L_2(U)} \rd u
	=
	0,
	\]
	which holds by the continuity of translations in $L^2(0,\infty; \mathscr L_2(U))$, 
	see for instance~\cite[Lemma~A.4]{KW2022} for this specific form.
	Similarly, for the mean-square differentiability
	we note that compared to the proof of~\cite[Proposition~3.21]{KW2022}, only the terms
	$I_1^{h^\pm}$ change when passing from $t_0 = 0$ to a general $t_0$. 
	For $t \in \bbT$ and $h \in (0, \infty)$, we have
	\begin{equation}
		I^{h^+}_1 = \int_{t_0}^{t} 
		\biggl[ \frac{\Psi(t+h-s)-\Psi(t-s)}{h} - \Psi'(t-s)\biggr] \rd W^Q(s),
	\end{equation}
	whereas for $t \in (t_0, \infty)$ and $h \in [t_0 - t, 0)$ (or $h \in (-\infty, 0)$ if $t_0 = -\infty$),
	\begin{equation}
		I^{h^-}_1 = \int_{t_0}^{t+h} 
		\biggl[ \frac{\Psi(t+h-s)-\Psi(t-s)}{h} - \Psi'(t-s)\biggr] \rd W^Q(s).
	\end{equation}
	Using the It\^o isometry~\eqref{eq:ito-R} and applying the change of variables $u := t - s$:
	\begin{align}
		\bigl\| I^{h^{\pm}}_1 \bigr\|^2_{L^2(\Omega; U)}
		=
		\int_{0 \vee (-h)}^{t-t_0} \,
		\biggl\| \biggl[\frac{\Psi(u+h)-\Psi(u)}{h} - \Psi'(u)\biggr] Q^\frac{1}{2}\biggr\|^2_{\mathscr L_2(U)} \rd u.
	\end{align}
	The proof is finished upon noting that replacing $t$ by $t - t_0$ does not affect the remaining step of the argument.
\end{proof}

\section{Fractional powers of the parabolic operator}\label{app:frac-powers}
Let $A \from \mathsf D(A) \subseteq U \to U$ be a linear operator
on a real Hilbert space $U$.

\begin{definition}\label{def:frac-power-Hille-Phillips}
	Under Assumption~\ref{ass:standing}, we define
	the negative fractional power operator $A^{-\alpha}$ of order $\alpha \in (0,\infty)$ 
	as the $\mathscr L(U)$-valued Bochner integral
	\begin{equation}\label{eq:def-frac-power-Hille-Phillips}
		A^{-\alpha}
		:=
		\frac{1}{\Gamma(\alpha)}
		\int_0^\infty 
		t^{\alpha-1} S(t) \rd t.
	\end{equation}
	Then $A^{-\alpha}$ is injective and we define
	$A^\alpha x := (A^{-\alpha})^{-1} x$ for 
	$x \in \mathsf D(A^\alpha) := \mathsf R(A^{-\alpha})$.
	For $\alpha = 0$ we set $A^0 := \id_U$.
\end{definition}
See~\cite[Section~2.6]{Pazy1983} for more details regarding
this definition of fractional powers.

\begin{lemma}\label{lem:Bochner-space-operator}
	Let $(S, \mathscr A, \mu)$ be a measure space
	such that $L^2(\Sigma; \R)$ is nontrivial and consider the linear
	operator $\cA_S$ on $L^2(S; U)$
	defined by~\eqref{eq:Bochner-counterpart}.
	Then the following statements hold:
	\begin{enumerate}[leftmargin=1cm, label={\normalfont(\alph*)}]
		\item\label{lem:Bochner-space-operator-a} 
		$\mathcal A_S \in \mathscr L(L^2(S; U))$ if and only if 
		$A \in \mathscr L(U)$, with equal operator norms;
		\item\label{lem:Bochner-space-operator-b}
		$\mathcal A_S$ is closed if and only if $A$ is;
		\item \label{lem:Bochner-space-operator-c}
		If $A$ generates a $C_0$-semigroup $(T(t))_{t\ge0}$ on $U$, 
		then
		$\cA_S$ generates the $C_0$-semigroup $(\cT_S(t))_{t\ge0}$ on 
		$L^2(S; U)$.
	\end{enumerate}
\end{lemma}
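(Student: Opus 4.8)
The plan is to prove the three assertions by transferring properties between $A$ and $\cA_S$ with the help of the elementary ``lifting'' device $f = g\otimes x$, where $x\in U$ and $g\in L^2(S;\R)$ is nonzero (such a $g$ exists precisely because $L^2(S;\R)$ is nontrivial, and we may normalise $\norm{g}{L^2(S;\R)}=1$). For such $f$ one has $f\in\dom{\cA_S}$ exactly when $x\in\dom{A}$, in which case $\cA_S f = g\otimes Ax$, together with the identity $\norm{g\otimes x}{L^2(S;U)}=\norm{x}{U}$. These relations let me test the boundedness and closedness of $\cA_S$ on one-dimensional ``fibres'' and read off the corresponding property of $A$.

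For part~(a): if $A\in\mathscr L(U)$ then $\dom{\cA_S}=L^2(S;U)$ and $\norm{\cA_S f}{L^2(S;U)}^2=\int_S\norm{Af(s)}{U}^2\rd\mu\le\norm{A}{\mathscr L(U)}^2\norm{f}{L^2(S;U)}^2$, giving boundedness with norm $\le\norm{A}{\mathscr L(U)}$. Conversely, if $\cA_S\in\mathscr L(L^2(S;U))$ it is in particular everywhere defined, so testing on $g\otimes x$ forces $x\in\dom{A}$ for every $x\in U$, i.e.\ $\dom{A}=U$; and $\norm{\cA_S(g\otimes x)}{L^2(S;U)}=\norm{Ax}{U}$ yields $\norm{A}{\mathscr L(U)}\le\norm{\cA_S}{\mathscr L(L^2(S;U))}$. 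The two inequalities give equality of norms. For part~(b): assuming $A$ closed, I take $f_n\in\dom{\cA_S}$ with $f_n\to f$ and $\cA_S f_n\to g$ in $L^2(S;U)$, pass to a subsequence along which both converge $\mu$-a.e., and apply the closedness of $A$ pointwise to obtain $f(s)\in\dom{A}$ and $Af(s)=g(s)$ a.e., hence $f\in\dom{\cA_S}$ and $\cA_S f=g$. The converse lifts a graph-convergent sequence $x_n\to x$, $Ax_n\to y$ to $f_n=g\otimes x_n$ and uses closedness of $\cA_S$ followed by evaluation where $g\neq0$.

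Part~(c) is the core. First, $(\cT_S(t))_{t\ge0}$ is a family of bounded operators by~(a), and the pointwise relations $T(0)=\id_U$, $T(t+s)=T(t)T(s)$ make it a semigroup. Strong continuity at $0$ follows by dominated convergence: with $\norm{T(t)}{\mathscr L(U)}\le Me^{\omega t}$ one has $\norm{T(t)f(s)-f(s)}{U}^2\le(Me^{\omega}+1)^2\norm{f(s)}{U}^2$ for $t\in[0,1]$, the right-hand side is integrable and independent of $t$, while $T(t)f(s)\to f(s)$ a.e.\ as $t\downarrow0$. To identify the generator $B$ of $(\cT_S(t))$ with $\cA_S$, I would fix $\lambda>\omega$, let $\cR_S$ be the Bochner counterpart of the bounded resolvent $R(\lambda,A)=\int_0^\infty e^{-\lambda t}T(t)\rd t$, and show $R(\lambda,B)=\cR_S$. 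Pairing $R(\lambda,B)f=\int_0^\infty e^{-\lambda t}\cT_S(t)f\rd t$ with an arbitrary $h\in L^2(S;U)$ and interchanging the time and space integrals by Fubini (legitimate since $\int_0^\infty\int_S e^{-\lambda t}\lvert\scalar{T(t)f(s),h(s)}{U}\rvert\rd\mu\rd t<\infty$ for $\lambda>\omega$) gives $\scalar{R(\lambda,B)f,h}{L^2(S;U)}=\scalar{\cR_S f,h}{L^2(S;U)}$, whence $R(\lambda,B)=\cR_S$. Finally the pointwise identities $R(\lambda,A)(\lambda-A)=\id$ on $\dom{A}$ and $(\lambda-A)R(\lambda,A)=\id$ on $U$ lift to $\cR_S=(\lambda-\cA_S)^{-1}$, and comparing with $R(\lambda,B)=(\lambda-B)^{-1}$ forces $B=\cA_S$.

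The step I expect to be the main obstacle is precisely this generator identification in~(c): one must justify that the Bochner integral defining $R(\lambda,B)$ in $L^2(S;U)$ coincides with the fibrewise application of $R(\lambda,A)$, and the clean route is the duality/Fubini argument above rather than attempting to evaluate an $L^2(S;U)$-valued Bochner integral pointwise in $s$. All remaining steps reduce to the $g\otimes x$ device and a.e.\ pointwise arguments.
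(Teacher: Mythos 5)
Your proof is correct and follows essentially the same route as the paper's: for (a)--(b) the paper's proof is precisely your normalized lifting device $g\otimes x$ (it instructs to argue as in its reference, replacing $T^{-\frac{1}{2}}\mathbf 1_{(0,T)}$ by an arbitrary unit-norm function in $L^2(S;\R)$), and for (c) the paper defers to an analogous result proved by the same standard semigroup argument you give (strong continuity via dominated convergence and identification of the generator through the fibrewise resolvent).
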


\begin{proof}
	For~\ref{lem:Bochner-space-operator-a}--\ref{lem:Bochner-space-operator-b} 
	one can argue as in the proof of~\cite[Lemma~A.1]{KW2022}, 
	replacing $T^{-\frac{1}{2}} \mathbf 1_{(0,T)}$
	by 
	another function with unit $L^2(S; \R)$-norm.
	The proof of~\ref{lem:Bochner-space-operator-c} is entirely analogous to
	that of~\cite[Proposition~A.3]{KW2022}.
\end{proof}

The next two statements and their proofs are analogous
to Propositions~A.5 and~3.2 of \cite{KW2022}, respectively.

\begin{proposition}
	For $t \in \R$, let the operator $\cT(t) \in \mathscr L(L^2(\R; U))$
	be defined by 
	\begin{equation}\label{eq:def-translationgroup}
		\cT(t) f := f(\,\cdot\, - t), \quad f \in L^2(\R; U).
	\end{equation}
	The family $(\cT(t))_{t\in\R}$ is a $C_0$-group whose infinitesimal
	generator is given by $-\partial_t$, where $\partial_t$ 
	is the Bochner--Sobolev weak derivative on 
	$\mathsf D(\partial_t) = H^1(\R; U)$.
\end{proposition}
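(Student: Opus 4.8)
The plan is to first verify that $(\cT(t))_{t\in\R}$ is a $C_0$-group and then to identify its generator $G$ by establishing the two inclusions $-\partial_t \subseteq G$ and $G \subseteq -\partial_t$. For the group structure, I would note that $\cT(0) = \id_{L^2(\R;U)}$ and $\cT(s)\cT(t) = \cT(s+t)$ follow directly from $\cT(t)f = f(\,\cdot\, - t)$, and that each $\cT(t)$ is an isometry by translation invariance of the Lebesgue measure, so $\cT(t) \in \mathscr L(L^2(\R;U))$ with unit norm. Strong continuity would follow from the continuity of translations in $L^2(\R; U)$---the same fact already used in the main text, cf.~\cite[Lemma~A.4]{KW2022}---by approximating a general $f \in L^2(\R; U)$ with elements of $C_c(\R; U)$ and combining their uniform continuity with the isometry property in a $3\varepsilon$-argument. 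Together with the group law this gives $\lim_{t\to 0}\cT(t)f = f$ in $L^2(\R; U)$ for every $f$, so that $(\cT(t))_{t\in\R}$ is a $C_0$-group.

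For $-\partial_t \subseteq G$, I would take $f \in H^1(\R; U)$ and use the fundamental theorem of calculus for Bochner--Sobolev functions to write $\cT(t)f - f = -\int_0^t \cT(s)\partial_t f \rd s$ for $t > 0$ (and the analogous identity for $t < 0$). Dividing by $t$ and using that $s \mapsto \cT(s)\partial_t f$ is continuous into $L^2(\R; U)$ by the strong continuity above, the average converges to $\cT(0)\partial_t f = \partial_t f$, whence $f \in \dom{G}$ and $Gf = -\partial_t f$.

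For the reverse inclusion I would take $f \in \dom{G}$ with $Gf = g$ and test the difference quotients against an arbitrary scalar $\phi \in C_c^\infty(\R;\R)$. On the one hand, Cauchy--Schwarz for the Bochner integral turns the $L^2(\R;U)$-convergence $\tfrac{1}{t}(\cT(t)f - f) \to g$ into $\int_\R \phi\, \tfrac{1}{t}(\cT(t)f - f) \rd x \to \int_\R \phi\, g \rd x$ in $U$; on the other hand, a change of variables rewrites the same expression as $\int_\R \tfrac{1}{t}(\phi(\,\cdot\,+t) - \phi)\, f \rd x$, which converges to $\int_\R \phi' f \rd x$ because $\tfrac{1}{t}(\phi(\,\cdot\,+t)-\phi) \to \phi'$ in $L^2(\R)$. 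Equating the two limits yields $\int_\R \phi' f \rd x = \int_\R \phi\, g \rd x$, which is precisely the defining relation (see Subsection~\ref{subsec:app-func-spaces:Bochner-Sobolev}) for $f$ to be weakly differentiable with $\partial_t f = -g \in L^2(\R; U)$. Hence $f \in H^1(\R; U)$ and $Gf = -\partial_t f$, and combining both inclusions gives $G = -\partial_t$ on $\dom{G} = H^1(\R; U)$.

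The only delicate point is this reverse inclusion: one must pass to the limit in the difference quotients on both sides simultaneously---pairing the strongly convergent $\tfrac{1}{t}(\cT(t)f - f)$ with the fixed function $\phi$, and the convergent $\tfrac{1}{t}(\phi(\,\cdot\,+t)-\phi)$ with the fixed function $f$---and then recognize the resulting identity as the definition of the weak derivative. Everything else is either an immediate computation or a citation of the standard continuity-of-translations fact, so I do not anticipate further obstacles.
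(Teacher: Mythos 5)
Your proof is correct, and it is worth noting that the paper itself does not write out an argument at all: it only remarks that the proof is ``analogous to Proposition~A.5 of \cite{KW2022}'', so your proposal supplies precisely the details that the paper leaves to a citation. Your route is the standard one for translation groups, and all three steps are sound: the group law and isometry are immediate from translation invariance, strong continuity follows from continuity of translations via density of $C_c(\R;U)$, and the generator identification is done correctly in both directions. In particular, the two points that could have been glossed over are handled properly. For the inclusion $-\partial_t \subseteq G$, the identity $\cT(t)f - f = -\int_0^t \cT(s)\,\partial_t f \rd s$ does hold as a Bochner integral in $L^2(\R;U)$, since the vector-valued fundamental theorem of calculus gives the pointwise version for the absolutely continuous representative of $f \in H^1(\R;U)$, and the $L^2$-valued Bochner integral agrees a.e.\ with the pointwise integral because $s \mapsto \cT(s)\partial_t f$ is continuous into $L^2(\R;U)$. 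For the reverse inclusion, your pairing of the strongly convergent difference quotient of $f$ against a fixed $\phi \in C_c^\infty(\R;\R)$, and of the convergent difference quotient of $\phi$ against the fixed $f$, yields exactly the relation $\int_\R \phi' f \rd x = \int_\R \phi\, g \rd x$, which by the definition of the weak derivative in Subsection~\ref{subsec:app-func-spaces:Bochner-Sobolev} (testing against scalar $C_c^\infty$ functions) gives $f \in H^1(\R;U)$ with $\partial_t f = -g$. No gaps remain.
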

\begin{proposition}\label{prop:parabolic-operator-semigroup}
	Suppose that Assumption~\textup{\ref{ass:standing}} holds.
	The closure $\cB := \clos{\partial_t + \cA_\R}$
	of the sum operator $\partial_t + \cA_\R$ exists and
	$-\cB$ generates the $C_0$-semigroup 
	$(\cS_\R(t)\cT(t))_{t\ge0}$ on $L^2(\R; U)$, which satisfies
	\[
	\forall t \in \R \colon 
	\quad 
	\| \cS_\R(t) \cT(t) \|_{L^2(\R; U)}
	=
	\| \cT(t) \cS_\R(t) \|_{L^2(\R; U)}
	=
	\| S(t) \|_{\mathscr L(U)},
	\]
	where $(\cS_\R(t))_{t\ge0}$ and $(\cT(t))_{t\ge0}$ are defined
	as in~\eqref{eq:Bochner-counterpart} and~\eqref{eq:def-translationgroup},
	respectively.
\end{proposition}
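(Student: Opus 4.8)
The plan is to exhibit $(\cS_\R(t)\cT(t))_{t\ge0}$ as the product of two \emph{commuting} $C_0$-semigroups and to identify its generator with $-\cB$ by a core argument. First I would recall that, by Lemma~\ref{lem:Bochner-space-operator}\ref{lem:Bochner-space-operator-c}, the operator $-\cA_\R$ generates $(\cS_\R(t))_{t\ge0}$ on $L^2(\R;U)$, while by the preceding proposition $-\partial_t$ generates the translation group $(\cT(t))_{t\in\R}$. A direct pointwise computation shows these families commute: both $\cS_\R(t)\cT(t')$ and $\cT(t')\cS_\R(t)$ send $f$ to the function $s\mapsto S(t)f(s-t')$. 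Consequently $P(t):=\cS_\R(t)\cT(t)$ obeys $P(t)P(s)=P(t+s)$, and strong continuity follows by splitting $P(t)f-f=\cS_\R(t)(\cT(t)f-f)+(\cS_\R(t)f-f)$ and invoking the uniform bound $\|\cS_\R(t)\|=\|S(t)\|_{\LO(U)}\le M_0$ for $t$ near $0$. Thus $(P(t))_{t\ge0}$ is a $C_0$-semigroup; write $-\cC$ for its generator.

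Next I would compute this generator on the common domain $D:=H^1(\R;U)\cap L^2(\R;\dom A)=\dom{\partial_t+\cA_\R}$. For $f\in D$, writing $\tfrac{P(t)f-f}{t}=\cS_\R(t)\tfrac{\cT(t)f-f}{t}+\tfrac{\cS_\R(t)f-f}{t}$ and letting $t\downarrow0$ gives $-\cC f=-(\partial_t+\cA_\R)f$; the cross term is handled by the same boundedness-plus-strong-continuity estimate, since $\tfrac{\cT(t)f-f}{t}\to-\partial_t f$ in $L^2(\R;U)$ and $\cS_\R(t)\to\id$ strongly with $\|\cS_\R(t)\|\le M_0$. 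Hence $\cC\supseteq\partial_t+\cA_\R$; as $\cC$ is closed, the densely defined sum operator is closable, its closure $\cB=\clos{\partial_t+\cA_\R}$ exists, and $\cB\subseteq\cC$.

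The main obstacle is the reverse inclusion $\cC\subseteq\cB$, i.e.\ that the generator is \emph{exactly} the closure rather than a proper extension. I would resolve this through the standard invariance criterion for cores (see~\cite{Engel1999}): a dense, $P(t)$-invariant subspace of $\dom\cC$ is a core for $\cC$. Density of $D$ holds because it contains every $\phi\otimes x$ with $\phi\in C_c^\infty(\R)$ and $x\in\dom A$. For invariance, $\cT(t)$ preserves both $H^1(\R;U)$ and $L^2(\R;\dom A)$ by translation, while $\cS_\R(t)$ preserves $H^1(\R;U)$ (its weak derivative is $\cS_\R(t)\partial_t f$) and $L^2(\R;\dom A)$ (since $S(t)\dom A\subseteq\dom A$). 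Therefore $D$ is a core, and $\cC=\clos{\cC|_D}=\clos{\partial_t+\cA_\R}=\cB$, establishing that $-\cB$ generates $(\cS_\R(t)\cT(t))_{t\ge0}$.

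Finally, for the norm identities I would use that $\cT(t)$ is unitary on $L^2(\R;U)$ (translation is an invertible isometry) and commutes with $\cS_\R(t)$; right- or left-multiplication by a unitary preserves the operator norm, so $\|\cS_\R(t)\cT(t)\|=\|\cT(t)\cS_\R(t)\|=\|\cS_\R(t)\|$. The last quantity equals $\|S(t)\|_{\LO(U)}$ by the equality of operator norms in Lemma~\ref{lem:Bochner-space-operator}\ref{lem:Bochner-space-operator-a} applied to the bounded operator $S(t)$, which completes the proof.
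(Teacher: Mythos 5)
Your proof is correct and complete: the pointwise commutation of $\cS_\R(t)$ and $\cT(t')$, the semigroup law and strong continuity of the product, the generator computation on $D = H^1(\R;U)\cap L^2(\R;\dom{A})$, the core-invariance argument giving $\cC = \cB$, and the norm identities via unitarity of translation plus Lemma~\ref{lem:Bochner-space-operator}\ref{lem:Bochner-space-operator-a} all check out. The paper gives no inline proof here --- it defers to the analogous \cite[Proposition~3.2]{KW2022} --- and your argument (product of commuting semigroups, then identification of the generator as the closure of the sum via a dense invariant core) is exactly the standard route such a proof takes, so there is nothing to flag.
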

We thus find that $(\cS_\R(t)\cT(t))_{t\ge0}$ inherits the exponential
stability of $(S(t))_{t\ge0}$, so that fractional powers
of $\cB$ can be defined using Definition~\ref{def:frac-power-Hille-Phillips}. Therefore,
under Assumption~\ref{ass:standing}, equation~\eqref{eq:def-frac-power-Hille-Phillips} implies
\[
\cB^{-\gamma} f(t) 
= 
\frac{1}{\Gamma(\gamma)} 
\int_0^\infty r^{\gamma-1} \cS_\R(r) \cT(r) f(t) \rd r
=
\frac{1}{\Gamma(\gamma)} 
\int_0^\infty r^{\gamma-1} S(r) f(t-r) \rd r
\]
for $\gamma \in (0,\infty), f \in L^2(\R; U)$ and almost all $t \in \R$.
We conclude that $\cB^{-\gamma} = \mathfrak I^\gamma$ for 
all $\gamma \in [0,\infty)$, where the latter is defined by~\eqref{eq:def-Igamma}.

\bibliographystyle{siam}
\bibliography{kw-spde-markov.bib}

\end{document}